\newcommand{\per}[0]{\text{per}}
\pgfplotsset{compat=1.10}
\tikzset{
    every node/.style={font=\sffamily,minimum size=7mm, inner sep=0pt]},
    main node/.style={circle, draw, minimum size=#1,
              inner sep=3pt, outer sep=1pt},
    matching edge/.style={->,> = latex',red},
    edge/.style={->,> = latex'},
    edges/.style={-}
}
\newtheorem{theorem}{Theorem}[section]
\newtheorem{lemma}[theorem]{Lemma}
\newtheorem{prop}[theorem]{Proposition}
\newtheorem{corollary}[theorem]{Corollary}
\theoremstyle{remark}
\newtheorem{remark}[theorem]{Remark}
\newtheorem*{conjecture}{Conjecture}
\begin{document}

\begin{frontmatter}
\title{sequential importance sampling for estimating expectations over the space of perfect matchings}
\runtitle{SIS over the space of perfect matchings}

\begin{aug}
\author[A]{\fnms{Yeganeh} \snm{Alimohammadi}\ead[label=e1]{yeganeh@stanford.edu}},
\author[B]{\fnms{Persi} \snm{Diaconis}\ead[label=e2,mark]{diaconis@math.stanford.edu}},
\author[A]{\fnms{Mohammad} \snm{Roghani}\ead[label=e4,mark]{roghani@stanford.edu}}
\and
\author[A]{\fnms{Amin} \snm{Saberi}\ead[label=e3,mark]{saberi@stanford.edu}}
\address[A]{Management Science and Engineering,
Stanford University,
\printead{e1,e3,e4}}

\address[B]{Department of Statistics,
Stanford University,
\printead{e2}}
\end{aug}

\begin{abstract}
This paper makes three contributions to estimating the number of perfect matching in bipartite graphs. First, we prove that the popular sequential importance sampling algorithm works in polynomial time for dense bipartite graphs. More carefully, our algorithm gives a $(1\pm\epsilon)$-approximation for the number of perfect matchings of a $\lambda$-dense bipartite graph, using $O(n^{\frac{1-2\lambda}{\lambda}\epsilon^{-2}})$ samples. With size $n$ on each side and for $\frac{1}{2}>\lambda>0$, a $\lambda$-dense bipartite graph has all degrees greater than $(\lambda+\frac{1}{2})n$.  

Second, practical applications of the algorithm requires many calls to matching algorithms. A novel preprocessing step is provided which makes significant improvements. 

Third, three  applications are provided. The first is for counting  Latin squares, the second   is a practical way of computing the greedy algorithm for a card guessing game with feedback, and the third is for stochastic block models. In all three examples, sequential importance sampling allows treating practical problems of reasonably large sizes.

\end{abstract}

\begin{keyword}[class=MSC2020]
\kwd[Primary ]{65C05} 
\kwd[; secondary ]{05B15} 
\end{keyword}

\begin{keyword}
\kwd{Perfect Matching} %
\kwd{Sequential Importance Sampling}
\kwd{Dense Bipartite Graph} %
\kwd{KL-divergence}
\kwd{Latin Squares} %
\kwd{Card Guessing Experiment}
\kwd{Stochastic Block Model}
\end{keyword}

\end{frontmatter}

\section{Introduction}\label{sec: intro}

Given a bipartite graph $G(X, Y)$ with $|X|=|Y|=n$, a perfect matching is a subgraph of $G$ with all vertices having degree exactly one. We study the problem of uniform sampling of perfect matchings in bipartite graphs. By classical equivalences between approximate counting and sampling \cite{JERRUM1986169}, uniform sampling is equivalent to approximating the permanent of the adjacency matrix of $G$.

Computing the permanent of a general $0-1$ matrix is $\#$P-complete \cite{VALIANT1979189}. A variety of algorithms have been developed  to approximate the permanent. These are reviewed in Section \ref{sec: background permanents}. A highlight of this development is the Jerrum et al. \cite{Jerrum:2004:PAA:1008731.1008738,jerrum1989approximating} theorem giving a fully polynomial randomized approximation scheme (FPRAS). 
 Alas, despite numerous attempts, they have not been     very effective in practice \cite{newman2020fpras,Sankowski}.

Sequential importance sampling (SIS) constructs a perfect matching sequentially by drawing each edge with weighted probabilities. 
First suggested by \cite{rasmussen}, and actively improved and implemented by \cite{BEICHL1999128, dufosse, Sankowski}. This seems to work well but has eluded theoretical underpinnings. The present paper corrects this.

To state the main result, the following notation is needed.  For an adjacency matrix $A$, define its doubly-stochastic scaling $Q^A$, a matrix with row and column sums equal to $1$, which can be written as
\begin{equation}\label{eq: DS scaling}
    Q^A=D_\alpha A D_\beta,\qquad \text{$D_\alpha$ and $D_\beta$ are diagonal matrices.}
\end{equation} 
For graphs that have at least one perfect matching,  the permanent of $A$ is strictly positive, 
and hence, $Q^A$ exists and it is unique \cite{rothblum1989scalings}.

We guide a sequential importance sampling algorithm using $Q^A$.  The algorithm chooses edges of a perfect matching one at a time, proportionally to their corresponding weights in $Q^A$ (see Algorithm~\ref{alg: SIS-matching} in Section~\ref{sec:dense}).
 Let $(M_1,X_1),\ldots,(M_N,X_N)$ be the outputs of $N$ independent runs of Algorithm \ref{alg: SIS-matching} with $Q^A$ as input, where $M_i$ is a perfect matching and $X_i$ is the probability of sampling $M_i$. To count the number of perfect matchings define the estimator as
\begin{equation}\label{eq: matching estimator}
I_N=\frac{1}{N}\sum_{i=1}^NX_i^{-1}.
\end{equation}
Recall that $G(X,Y)$ is a $\lambda$-dense graph if $|X|=|Y|=n$ and all degrees are greater than $(\lambda+\frac{1}{2})n$. 

\begin{theorem} \label{thm: PTAS dense graph}
Given $\lambda\in(0,\frac{1}{2})$, let $G$ be a $\lambda$-dense  graph of size $2n$. Also, let $M(G)$ be the number of perfect matchings in $G$. Then for any given $\epsilon>0$ there exist constants $C_{\lambda,\epsilon}, C'>0$ such that 
for any  $n\geq C'$,  \[\mathbb{E}\left(\frac{|I_N-M(G)|}{M(G)}\right)\leq \epsilon,\]
where {$N\geq C_{\lambda,\epsilon} n^{\frac{1-2\lambda}{\lambda}\epsilon^{-2}}$, and the running time for obtaining each sample is $O(n^{3})$.}
\end{theorem}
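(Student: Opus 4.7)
The estimator is unbiased: $\mathbb{E}_Q[X_i^{-1}] = \sum_{M\in\mathcal{M}(G)} Q(M)\cdot Q(M)^{-1}=M(G)$. By Cauchy--Schwarz applied to the independent samples,
\begin{equation*}
\mathbb{E}\!\left[\frac{|I_N - M(G)|}{M(G)}\right]\;\le\;\frac{1}{\sqrt{N}}\sqrt{\frac{\mathbb{E}_Q[X^{-2}]}{M(G)^2}-1}.
\end{equation*}
Writing $U$ for the uniform law on $\mathcal{M}(G)$, a direct calculation gives
\begin{equation*}
\frac{\mathbb{E}_Q[X^{-2}]}{M(G)^2}\;=\;\frac{1}{M(G)^2}\sum_{M\in\mathcal{M}(G)}\frac{1}{Q(M)}\;=\;1+\chi^2(U\|Q).
\end{equation*}
Thus it suffices to show $\chi^2(U\|Q)\le C_\lambda\, n^{(1-2\lambda)/(8\lambda)}$ for large $\lambda$-dense $G$, and the target $3\epsilon$ tolerance then follows by choosing $N$ as in the statement.

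\textbf{Telescoping the likelihood ratio along SIS.} Next I would expand $Q(M)$ and $U(M)$ in the order in which SIS builds the matching. Let $A^{(t)}$ be the submatrix indexed by the vertices still unmatched at step $t$, and $y_t$ the partner of $x_t$ in $M$. Then $Q(M)=\prod_{t=1}^n Q^{A^{(t)}}_{x_t,y_t}$, whereas $U(M)=\prod_{t=1}^n\operatorname{perm}(A^{(t)}_{-x_t,-y_t})/\operatorname{perm}(A^{(t)})$. The classical bridge is Sinkhorn scaling together with the Van der Waerden / Egorychev--Falikman bound $\operatorname{perm}(Q^{A^{(t)}})\ge m!/m^m$ for an $m\times m$ doubly stochastic matrix, plus its matching upper bound via Schrijver / Gurvits. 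These yield, per step, a bound on $u_t/Q^{A^{(t)}}_{x_t,y_t}$ in terms only of the minimum row/column sums of $A^{(t)}$. Since $\chi^2(U\|Q)+1=\mathbb{E}_U[U(M)/Q(M)]=\mathbb{E}_U\!\big[\prod_t u_t/Q^{A^{(t)}}_{x_t,y_t}\big]$, the problem reduces to controlling a product of local distortions taken along a uniformly random perfect matching.

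\textbf{Density tracking and early/late split.} Under the uniform law $U$, the matched vertex set $\{y_1,\dots,y_t\}$ is (approximately) a uniform random subset, so Chernoff-type concentration shows that for $t\le(1-c\lambda)n$ with a suitable $c$, the subgraph $G^{(t)}$ remains $\lambda'$-dense for some $\lambda'$ close to $\lambda$. In this early phase each distortion factor $u_t/Q^{A^{(t)}}_{x_t,y_t}$ is $O(1)$, and standard entropy-type aggregation (in the spirit of Chatterjee--Diaconis KL arguments) makes the whole early contribution to the $\chi^2$ a constant depending only on $\lambda$. In the late phase only $m=O(n^\alpha)$ unmatched vertices remain and the density hypothesis can fail; here I would fall back on the crude bound $u_t/Q^{A^{(t)}}_{x_t,y_t}\le m$, producing a factor $m^{O(m)}=n^{O(\alpha)}$. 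Optimizing $\alpha$ against the threshold at which $\lambda'$-density can be guaranteed is what fixes the exponent $(1-2\lambda)/(8\lambda)$.

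\textbf{Where the work lies.} The routine ingredients---unbiasedness, the $\chi^2$ identity, and the Van der Waerden lower bound---are standard; the real difficulty is the joint quantitative control of (i) persistence of $\lambda$-density along a \emph{uniformly} sampled partial matching, and (ii) stability of Sinkhorn scalings of the nested random submatrices $A^{(t)}$, sharp enough to produce the exponent $(1-2\lambda)/(8\lambda)$ rather than a cruder polynomial bound. The late-phase estimate is especially delicate: one needs the split point to be just past the regime where density is guaranteed, so that the combinatorial $m^{O(m)}$ penalty is still $n^{O((1-2\lambda)/(8\lambda))}$. I expect most of the technical work to consist of making these two estimates quantitative, plus handling the small probability that SIS produces a dead-end before completing a matching.
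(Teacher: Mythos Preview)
Your reduction to a variance/$\chi^2$ bound is a legitimate opening, but the proposal rests on a mis-reading of the algorithm that makes the subsequent analysis inapplicable. The SIS procedure in the paper computes the doubly stochastic scaling $Q^A$ of the \emph{original} adjacency matrix once, and then at step $t$ samples $y_t$ proportionally to the entries of that fixed $Q^A$, restricted to the currently $M$-extendable neighbours of $x_{\pi(t)}$. It does \emph{not} rescale the residual submatrix $A^{(t)}$ at each step. Hence your telescoping identity $Q(M)=\prod_t Q^{A^{(t)}}_{x_t,y_t}$ is false; the correct expression is $p_{\pi,Q^A}(M)=\prod_t Q^A_{\pi(t),M(\pi(t))}\big/\sum_{j\in N_{\pi(t)}}Q^A_{\pi(t),j}$, with $N_{\pi(t)}$ the set of extendable neighbours and $\pi$ a uniformly random order. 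The random permutation $\pi$ is essential and cannot be ignored. (Relatedly, the algorithm only selects extendable edges, so there are no dead-ends to handle.) Your ``density tracking along a uniform partial matching'' and ``stability of nested Sinkhorn scalings'' are therefore analysing a different algorithm from the one whose estimator $I_N$ appears in the theorem.

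Even setting that aside, the paper does \emph{not} bound $\chi^2(U\|Q)$ at all; it bounds the KL-divergence $L=D_{KL}(\nu\|\mu)\le(\tfrac{1}{8\lambda}-\tfrac14)\log n+O_\lambda(1)$ and then invokes the Chatterjee--Diaconis theorem together with a generic Markov-type tail bound $\mathbb{P}_\nu(\log\rho>L+t)\le(L+2)/(L+t)$. It is precisely this weak concentration that forces $\epsilon^{-2}$ into the \emph{exponent} of $n$, yielding $N\asymp n^{(1-2\lambda)/(8\lambda)+\epsilon^{-2}}$; a genuine $\chi^2$ bound of size $n^{(1-2\lambda)/(8\lambda)}$ would give the much stronger $N\asymp \epsilon^{-2}n^{(1-2\lambda)/(8\lambda)}$, which the paper does not claim. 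The actual derivation of the exponent $(1-2\lambda)/(8\lambda)$ comes from a convexity argument: one shows (using the matching-marginal bounds $c_\lambda/n\le P_e\le 1/(\lambda n)$ and the entrywise bound $Q^A_e\le 1/(2\lambda n)$ from Huber--Law) that the functional $-\sum_e P_e\log Q_e+\sum_{t,k}P_{tk}\mathbb{E}_\pi\log\sum_{j\ge_\pi k}Q_{tj}$ is convex in each row of $Q$, pushes $Q^A$ to an extremal $\{0,r_i/n\}$-valued matrix $Q^*$, and then applies a Bregman--Minc-type estimate together with Van der Waerden on $\operatorname{per}(Q^A)$. None of this is an early/late phase split, and no $\chi^2$ control is established or needed.
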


The proof uses the results of Chatterjee and Diaconis \cite{diaconis-chatterjee} to bound the number of samples. 
In general,  sequential importance sampling is used  to approximate a complicated measure $\mu$ with a relatively simple measure $\nu$.
Chatterjee and Diaconis \cite{diaconis-chatterjee} characterized the necessary and sufficient number of samples by the KL-divergence from $\nu$ to $\mu$, when $\log(d\nu/d\mu)$ is concentrated.  In order to prove  Theorem \ref{thm: PTAS dense graph}, we first bound the KL-divergence, and then use their result to find an upper bound on the number of samples.

Our analysis leans on the wonderful paper by Huber and Law \cite{Huber-Law}. They use a completely different approach but `along the way' prove some estimates which are crucial to us. We hope that the approach we take will allow proofs for a wide variety of applications of sequential importance sampling. It would certainly be worthwhile to see also if the approach of Huber and Law can be adapted to some of these problems as well. 
Using our notation, the running time of the algorithm proposed by \cite{Huber-Law} is $O(n^{4}\log(n)+n^{1.5+\frac{1}{4\lambda}})$ and it is  polynomial in $\epsilon^{-1}$.  While our theoretical upper bound for the running time is worse for small values of $\epsilon$, our algorithm may be more widely applicable because it does not directly  rely  on asymptotic bounds on the number of perfect matchings, and it can be applied to generate samples from sparse graphs or calculate any arbitrary statistics of perfect matchings. 

A main contribution of the paper is to show that sequential importance sampling is useful in practice by implementing the algorithm for three fresh applications, as displayed in Section \ref{sec: experiment}. 
The first problem is to estimate the number of Latin squares of order $n$. Asymptotic evaluation of this number is a long open problem. We compare three conjectured approximations using SIS.

The second problem involves a well-studied card guessing game. A deck of $dk$ cards has $d$ distinct values, each  repeated $k$ times. The cards are shuffled and a guessing subject tries to guess the values one at a time. After each guess the subject is told if the guess was right or wrong (not the actual value of the card if wrong). The greedy strategy -- guess the most likely card each time -- involves evaluating permanents. We implement SIS for this problem and give a host of examples and findings.

The third involves counting matchings in bipartite graphs generated by simple stochastic block models. Here, we investigate the benefit of using sequential importance sampling with doubly stochastic scaling (Algorithm \ref{alg: SIS-matching} with $Q^A$ as an input). The simulations show that with $Q^A$ the estimator converges faster to the number of matchings in stochastic block models.

Each of the three problems begins with a separate introduction and literature review. They may be consulted now for future motivations. 
We have prepared a C++ code for scaled sequential importance sampling (Algorithm \ref{alg: SIS-matching}) and its implementation for examples we consider in the paper. You can access it here \cite{ourCode}.

The rest of the paper is organized as follows.  We start by going over some preliminaries  in Section \ref{sec: prelim IS}. Then we proceed by proving Theorem \ref{thm: PTAS dense graph} in Section \ref{sec:dense}.  Section~\ref{sec: experiment} focuses on applications.   Appendix~\ref{sec: fast implmnt SIS} shows the running time of Algorithm \ref{alg: SIS-matching} for generating one sample is  $O(mn)$, where $m$ is the number of edges in $G$. Moreover, it is crucial for us to use $Q^A$ as an input of Algorithm \ref{alg: SIS-matching}.
Appendix \ref{sec:counter example matching} gives an example of a class of graphs with a bounded average degree such that the original method of choosing edges uniformly at random in \cite{diaconis2019randomized} needs an exponential number of samples, while using the doubly stochastic scaling will only need a linear number of samples. Finally, Appendix~\ref{sec: exact permanent card guessing} is on computing the exact permanent of zero-blocked matrices which will be used to find the greedy strategy for card guessing games.

\section{Notation and Background}\label{sec: notation}
In this section, we set up graph notations and give brief background on importance sampling and permanents.

\subsection{Graph Notation}
Given a graph $G$, let $V(G)$ be the set of vertices and $E(G)$ be the set of edges. For a node $v\in V(G)$, let $N(v)$ be the set of neighbors of $v$ in $G$. 

Suppose $G(X,Y)$ is a bipartite graph of size $2n$ with vertex sets $X=\{x_1,\ldots,x_n\}$ and $Y=\{y_1,\ldots, y_n\}$. The adjacency matrix $A$ of a graph $G$ is an $|X|\times |Y|$ binary matrix, where the entry $A_{ij}$ is equal to $1$ if and only if there exists an edge between $x_i$ and $y_j$. Let $S_n$ be the symmetric group of permutations on $n$ elements. We represent a perfect matching by a permutation $\sigma\in S_n$, such that each $x_i$ is matched to $y_{\sigma(i)}$.   Then the permanent of $A$ is defined as
\[\per(A)=\sum_{\sigma\in S_n}\prod_{i=1}^n A_{i\sigma(i)}.\]
For a graph $G$, let $\chi_G$ be the set of all perfect matchings in $G$. 

\subsection{Background on Importance Sampling}\label{sec: prelim IS}
The goal of importance sampling is to estimate the expected value of a function with respect to the probability measure $\nu$ (that is hard to sample) with  samples drawn from a different probability distribution $\mu$ (that is easy to sample). Assume $\mu$ and $\nu$ are two probability measures on the set $\chi$, so that $\nu$ is absolutely continuous with respect to $\mu$. Suppose that we want to evaluate $I(f)=\mathbb{E}_\nu(f)=\int_{\chi}f(x)d\nu(x) $, where $f$ is a measurable function. Let $X_1, X_2,\ldots$ be a sequence sampled from $\mu$ and $\rho=d\nu/d\mu$ be the probability density of $\nu$ with respect to $\mu$. The importance sampling estimator of $I(f)$ is
$$I_N(f)=\frac{1}{N}\sum_{i=1}^N f(X_i)\rho(X_i).$$

In our setting, the  sampling space $\chi$ is the set of perfect matchings,  the target distribution $\nu$ is the uniform distribution over $\chi$, and  $\mu$ is the sampling distribution  by using  Algorithm \ref{alg: SIS-matching}.
Note that here,  $\rho$ is  known only up to a constant. 
Still, we can  use the  estimator $I_N(f)$ to approximate the size of $\chi$. For that purpose, we set $f(x)=|\chi|$ for all $x\in \chi$. This choice of $f$ yields \eqref{eq: matching estimator}. Because for any sample  $X_i$ from the sampling distribution $\mu$, $$\rho(X_i)f(X_i)=\frac{1}{|\chi|\mu(X_i)}f(X_i)=\frac{1}{\mu(X_i)}.$$
Therefore, evaluating $I_N(|\chi|)$ is possible without knowing the exact value of $\rho(X)$ because,
$$I_N(|\chi|)=\frac{1}{N}\sum_{i=1}^N\rho(X_i)f(X_i)=\frac{1}{N}\sum_{i=1}^N\frac{1}{\mu(X_i)}.$$

Given the estimator $I_N$, our goal is to  bound the number of samples $N$. The traditional approach is to bound the variance of the estimator
\begin{equation}\label{variance}
    var(I_N(f))=\frac{1}{N}\Big(\int_{\chi}f(x)^2\rho(x)^2d\mu(x)-I(f)^2 \Big).
\end{equation}
See for example \cite{SiegmundIS,OwenZhu}. 
The number of samples needed for an accurate estimator must be greater than $var(I_N(f))/I(f)^2$ so that the standard deviation of $I_N(f)$ is bounded by $I(f)$.
Here, we follow a different approach which is based on bounding the KL-divergence of the  sampling distribution  ($\mu$) from the target distribution ($\nu$),  defined as $D_{KL}(\nu||\mu)=\mathbb{E}_{\nu}(\log \rho(Y ))$. 
Chatterjee and Diaconis \cite{diaconis-chatterjee} found  a necessary and sufficient number of samples when $\log\rho$ is concentrated with respect to $\nu$. 

\begin{theorem}[Theorem 1.1 in \cite{diaconis-chatterjee}]
\label{thm: num-samples}
Let $\chi, \mu, \nu, \rho, f, I(f)$ and $I_N(f)$ be as above. Let $Y$ be an $\chi$-valued random variable with law $\nu$. Let $L = D_{KL}(\nu||\mu)$ be the Kullback–Leibler divergence of $\mu$ from $\nu$, that is
$$L=D_{KL}(\nu||\mu)=\int_{\chi} \rho(x) \log \rho(x)d\mu(x) =\int_\chi \log \rho(x)d\nu(x) = \mathbb{E}_{\nu}(\log \rho(Y )) .$$
Also, let $||f||_{L^2(\nu)} := (\mathbb{E}(f(Y)^2))^{1/2}$. If
$N = \exp(L + t)$ 
for some $t \geq0$, then
\begin{equation}\label{eq: Chatterjee Diaconis thm num samples}
\mathbb{E}|I_N(f)-I(f)|\leq||f||_{L^2(\nu)}(e^{-t/4}+2\sqrt{\mathbb{P_\nu}(\log \rho(Y)>L+t/2)}).
\end{equation}
Conversely, if $f=1$, $N=\exp(L-t)$ then for any $\delta\in (0,1)$,
$$\mathbb{P}_\nu(I_N(f)>1-\delta)\leq e^{-t/2}+\frac{1}{1-\delta}\mathbb{P}_\nu(\log(Y)<L-t/2).$$
\end{theorem}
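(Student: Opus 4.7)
The plan is a truncation argument calibrated at the scale $\log\rho \approx L$. For the direct bound, let $Z_i = f(X_i)\rho(X_i)$, so that $I_N(f) = \frac{1}{N}\sum_i Z_i$ with $\mathbb{E}_\mu Z_i = I(f)$, and split $Z_i = Z_i' + Z_i''$ at the threshold $a = e^{L+t/2}$, where $Z_i' = Z_i \,\mathbf{1}\{\rho(X_i)\le a\}$. The triangle inequality reduces the problem to three pieces: the fluctuation $\mathbb{E}\big|\tfrac{1}{N}\sum_i Z_i' - \mathbb{E}_\mu Z_1'\big|$, the truncated tail $\mathbb{E}\big|\tfrac{1}{N}\sum_i Z_i''\big|$, and the bias $|\mathbb{E}_\mu Z_1''|$. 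Each is estimated through the change-of-measure identity $\mathbb{E}_\mu[g\cdot\rho] = \mathbb{E}_\nu[g]$.

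For the fluctuation, I would bound the expected absolute deviation by the standard deviation, using $(Z_1')^2 \le a\, f^2\rho$ pointwise on $\{\rho \le a\}$; converting to $\nu$ yields
\[
\sqrt{\mathrm{Var}_\mu(Z_1')/N}\;\le\;\sqrt{a\,\|f\|_{L^2(\nu)}^2/N}\;=\;e^{-t/4}\|f\|_{L^2(\nu)},
\]
after substituting $a = e^{L+t/2}$ and $N = e^{L+t}$. For the two truncation-tail terms, note that $\mathbb{E}_\mu|Z_1''| = \mathbb{E}_\nu[|f|\,\mathbf{1}\{\log\rho > L+t/2\}]$, so Cauchy--Schwarz in $L^2(\nu)$ gives a bound of $\|f\|_{L^2(\nu)}\sqrt{\mathbb{P}_\nu(\log\rho > L+t/2)}$ for each. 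Summing the three contributions yields exactly \eqref{eq: Chatterjee Diaconis thm num samples}.

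For the converse, with $f \equiv 1$ and $N = e^{L-t}$, define the heavy-density set $B = \{\rho \ge e^{L-t/2}\}$. A change of measure gives $\mathbb{P}_\mu(B) \le e^{-(L-t/2)}\nu(B) \le e^{-(L-t/2)}$, so a union bound over the $N$ samples shows that no sample lands in $B$ with probability at least $1 - e^{-t/2}$. On this good event, $I_N \le \tfrac{1}{N}\sum_i \rho(X_i)\,\mathbf{1}_{B^c}$, whose $\mu$-expectation equals $\nu(B^c) = \mathbb{P}_\nu(\log\rho < L-t/2)$; Markov's inequality at level $1-\delta$ then produces the $(1-\delta)^{-1}$ factor in the stated bound.

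The main obstacle, and really the only nontrivial choice, is calibrating the truncation at the logarithmic midpoint $L + t/2$ (respectively $L - t/2$): it is precisely this placement that balances the variance of the truncated sum against the truncation bias to produce the clean $e^{-t/4}$ factor. Any other threshold inflates one of the two error sources. Beyond this calibration, the argument uses only Cauchy--Schwarz, Markov's inequality, and the change-of-measure identity, so no heavy machinery is needed.
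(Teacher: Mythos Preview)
The paper does not prove this theorem; it is quoted verbatim from Chatterjee and Diaconis \cite{diaconis-chatterjee} and used as a black box. Your argument is correct and is essentially the original truncation proof from that reference: split at the level $e^{L+t/2}$, bound the variance of the truncated part by $a\|f\|_{L^2(\nu)}^2$ via the change of measure, and handle the two tail pieces by Cauchy--Schwarz under $\nu$; for the converse, control $\mu(B)$ through $\nu(B)\ge e^{L-t/2}\mu(B)$, union-bound over the $N$ samples, and finish with Markov. So there is nothing to compare against in the present paper, but your write-up faithfully reconstructs the intended proof.
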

The theorem shows that $e^L$ samples are necessary and sufficient for keeping the expected absolute difference of the estimator and its mean small, provided $\log\rho(Y)$ is concentrated around its mean $L$. 

For Theorem \ref{thm: PTAS dense graph}, we need to bound the number of samples required by Algorithm \ref{alg: SIS-matching}. For that purpose, we prove a logarithmic upper bound (in terms of the input size) on $L=D_{KL}(\nu||\mu)$. Then, we use the next lemma to obtain the concentration bounds needed in \eqref{eq: Chatterjee Diaconis thm num samples}. 
\begin{lemma} \label{lm: rho concentration}
With notation as in Theorem \ref{thm: num-samples}, we have
$$\mathbb{P}_\nu\Big(\log\rho(Y)\geq L+t\Big)\leq \frac{L+2}{L+t}.$$
\end{lemma}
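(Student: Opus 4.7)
The plan is to apply Markov's inequality after handling the fact that $\log\rho$ can take arbitrarily negative values and so is not itself nonnegative. The key observation is that even though $\log\rho(Y)$ is unbounded below, its \emph{positive} part has a clean upper bound in terms of $L$.

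First, I would note that $L\geq 0$. This follows from Jensen's inequality applied to the convex function $x\log x$, together with $\int \rho\,d\mu = 1$. Next, set $Z=\log\rho(Y)$ and $Z_+=\max(Z,0)$. By the change of measure $d\nu=\rho\,d\mu$,
\[
\mathbb{E}_\nu[Z_+] \;=\; \int (\log\rho)_+\,\rho\,d\mu \;=\; \int_{\{\rho\geq 1\}} \rho\log\rho\,d\mu.
\]
To control this quantity, I would split the definition of $L$ over the same partition:
\[
L \;=\; \int_{\{\rho\geq 1\}} \rho\log\rho\,d\mu \;+\; \int_{\{\rho<1\}} \rho\log\rho\,d\mu.
\]
The elementary fact $x\log x \geq -1/e$ for all $x\geq 0$ (minimum attained at $x=1/e$) combined with $\mu$ being a probability measure implies that the second integral is at least $-1/e$. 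Rearranging gives $\mathbb{E}_\nu[Z_+]\leq L+1/e$.

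Finally, since $L+t\geq 0$, the event $\{Z\geq L+t\}$ is contained in $\{Z_+\geq L+t\}$, and Markov's inequality applied to the nonnegative variable $Z_+$ yields
\[
\mathbb{P}_\nu\!\bigl(\log\rho(Y)\geq L+t\bigr) \;\leq\; \frac{\mathbb{E}_\nu[Z_+]}{L+t} \;\leq\; \frac{L+1/e}{L+t} \;\leq\; \frac{L+2}{L+t},
\]
which is the desired bound (with room to spare, since $1/e<2$). The edge case $L+t=0$ forces $L=t=0$, in which case the right-hand side is interpreted as $+\infty$ and the inequality is vacuous.

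There is no real obstacle here; the only subtle point is recognizing that one cannot apply Markov directly to $\log\rho$ because it is signed, and that the correct remedy is to replace $Z$ by $Z_+$ and exploit the lower bound $x\log x\geq -1/e$ to transfer the loss incurred by discarding the $\{\rho<1\}$ region into an additive constant. Everything else is a single change-of-measure step plus Markov.
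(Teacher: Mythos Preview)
Your proof is correct and in fact yields the sharper constant $L+1/e$ in place of the paper's $L+2$. Both arguments share the same skeleton---recognize that $\log\rho(Y)$ is signed, isolate the nonnegative piece, and apply Markov---but the technical step differs. The paper applies Markov to $|\log\rho(Y)|$, writes $\mathbb{E}_\nu|\log\rho(Y)|=L+2\,\mathbb{E}_\nu|\log\rho(Y)_-|$, and then bounds the negative part by a layer-cake computation: $\mathbb{E}_\nu|\log\rho(Y)_-|=\int_0^\infty \mathbb{E}_\mu[\rho\,\mathds{1}(\rho\le e^{-x})]\,dx\le\int_0^\infty e^{-x}\,dx=1$. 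You instead apply Markov directly to $Z_+$ and control the loss on $\{\rho<1\}$ via the pointwise inequality $x\log x\ge -1/e$. Your route is slightly more elementary (no tail-integral representation needed) and gives a better constant; the paper's route has the minor advantage of not requiring the observation $L\ge 0$ explicitly, though that is of course immediate.
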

\begin{proof}
Markov's inequality implies that
$$\mathbb{P}\Big(\log\rho(Y)>L+t\Big)\leq \frac{\mathbb{E}|\log\rho(Y)|}{L+t}.$$
Note that,
$$\mathbb{E}|\log\rho(Y)|=\mathbb{E}[\log\rho(Y)]+2\mathbb{E}|\log\rho(Y)_-|=L+2\mathbb{E}|\log\rho(Y)_-|,$$
where $\log\rho(Y)_-=\min(0,\log\rho(Y))$. 
It is enough to prove $\mathbb{E}|\log\rho(Y)_-|\leq 1$. One can write
\begin{align*}
    \mathbb{E}|\log\rho(Y)_{-}|&=\int_{0}^{\infty}\mathbb{P}(\log\rho(Y)\leq -x)dx\\
    &=\int_0^{\infty}\mathbb E_\nu[\mathds 1(\rho(Y)\leq e^{-x})] dx & (\mathds 1(.)\text{ is an indicator event})\\
    &=\int_0^{\infty}\mathbb E_\mu[\rho(X)\mathds{1}(\rho(X)\leq e^{-x})] dx. &\text{(change of measure)}
\end{align*}
To finish the proof, note that $\rho(X)\mathds{1}(\rho(X)\leq e^{-x})\leq e^{-x}$. Therefore,
\begin{align*}
    \mathbb{E}|\log\rho(Y)_-|&\leq \int_0^{\infty}e^{-x}dx=1.
    \end{align*}
\end{proof}

\subsection{Background on Permanents}\label{sec: background permanents}
The permanent of a matrix is intimately tied to the world of matchings in bipartite graphs. Fortunately, there is the  bible of Lovasz and Plummer \cite{matching-theory} which thoroughly reviews algorithms and applications to rook polynomials and other areas of combinatorics. For a wide swath of application in statistics and probability problems, see \cite{bapat1990permanents, DiaconisGrahamHolmes}.

Use of what amounts to sequential importance sampling to estimate permanents appears in \cite{rasmussen}. They do not use scaling but do manage to show that the variance of a na\"ive SIS estimator is small for almost all bipartite graphs -- of course, almost all graphs are dense. It is also shown that counting  Hamiltonian circuits is a very similar problem and so should be amenable to present proof techniques. Another application of sequential importance sampling  appears in \cite{kou2009}, which uses non-uniform random permutations generated in cycle format to design an importance sampling estimator.

In \cite{BEICHL1999128, harrissullivan, sullivanBeichl}, Beichl and Sullivan bring in Sinkhorn scaling. Their paper motivates this well and tries it out in a substantial real examples -- dimer coverings of an $n\times n$ square lattice. They forcefully raise the need for theory. We hope that the present paper answers some of their questions. In a later work \cite{jensenBeichl}, they apply scaled importance sampling to counting linear extensions of a partial order.

 There are host of approximation schema that give unbiased estimates of the permanents by computing the determinant of a related random matrix with elements in various fields (or even the Cayley numbers). Barvinok's fine book \cite{barvinok} treats some of these and \cite{Sankowski} gives a survey, tries them out, and employs sequential importance sampling in several numerical examples.
In \cite{bayati2007}, Bayati et al. took a slightly different approach to design a deterministic fully polynomial time approximation scheme for counting matchings in general graphs. They used the correlation decay, which was formerly exploited in several applications such as approximately counting the number of independent sets and colorings in some classes of graphs.

 Another pocket of developments in proving things about sequential importance sampling focuses on specific graphs and gets sharp estimates for the variance and $L$ in Theorem \ref{thm: num-samples} above. This begins with \cite{Diaconis2018SequentialIS}, followed by \cite{ChungPermanent, diaconis2019randomized, tsao2020theoretical}. The last is a thesis with good literature reviews.
 These papers illuminate the pros and cons of  the weights proposed by Sinkhorn scaling. Consider $G$ the `Fibonacci graph' with an edge from $i$ to $i'$ if $|i-i'|\leq 1$, for all $1\leq i,i'\leq n$. This graph has $F_{n+1}$ (Fibonacci number) perfect matchings. Consider building up a perfect matching, adding one edge at a time. Each time there are at most two choices. If the choices are made uniformly, they show that $e^{.002n}$ samples are needed. This is small for reasonable $n$ but still exponential. They prove that if there are two choices, choosing to transpose with probability $p=\frac{3-\sqrt{5}}{2}\sim.382$. This choice gives a sequential importance sampling algorithm which only needs a bounded number of samples, no matter what $n$ is (see \cite{ChungPermanent, diaconis2019randomized}). What weights does Sinkhorn scaling give? We are surprised that it gives $p\sim.333$ for vertices in the middle. From \cite{ChungPermanent}  this choice leads to needing an exponential number of samples. The papers give several further examples with these features. Thus Sinkhorn scaling is good, but it is not perfect.

\section{An SIS Algorithm for Counting the Number of Perfect Matchings}\label{sec:dense}
We start by formalizing the SIS algorithm to sample a perfect matching in general bipartite graphs in Section \ref{sec: alg}. The algorithm starts with an empty set and generates a perfect matching by adding  edges sequentially. At each step, the algorithm keeps a partial matching along with the probability of generating it.

\subsection{Algorithm for General Bipartite Graphs}\label{sec: alg}
Earlier authors \cite{Diaconis2018SequentialIS,diaconis2019randomized,rasmussen} analyzed an importance sampling algorithm that constructs matchings  sequentially by drawing edges uniformly at random. Here, we modify the algorithm by adding the possibility of drawing edges with respect to weights given by an input matrix $Q$.
To formalize the  algorithm, we need the following notation. Given a bipartite graph $G(X,Y)$ with $|X|=|Y|=n$ and a partial matching $M$, call an edge $e$ $M$-\textit{extendable} if there exists a perfect matching in $G$ that contains $M\cup\{ e\}$. 

To sample a perfect matching, take a nonnegative matrix $Q$ as   input, and construct the matching sequentially as follows. First, draw a uniform random permutation $\pi$ over the vertices in $X$. Then start with $M=\emptyset$, an empty matching.
At step $i$, match the $\pi(i)^{th}$ vertex of $X$ according to the following two criteria: 1) The pair must be $M$-extendable. 2) From the extendable edges,  the match is chosen randomly proportional to the weights given by row $\pi(i)$ of $Q$.
After $n$ steps, the algorithm returns a matching $M$, and the probability of generating the matching $p_{\pi,Q}(M)$ with respect to $\pi$ and $Q$. Assume that $Q_e>0$ for all edges $e$, otherwise remove the edge $e$ from graph. Then if the graph has a perfect matching, the algorithm will never fail since by condition 1 we always choose an extendable edge.

\begin{algorithm}[H]
\label{alg: SIS-matching}
\SetAlgoLined
\textbf{Input:} a bipartite graph $G(X,Y)$, and a nonnegative matrix $Q_{n\times n}$.

Draw a uniform random permutation $\pi$ on vertices in $X$.

Set $M=\emptyset$ and $p_{\pi,Q}(M)=1$.

\For{$i$ from 1 to $n$}{
Find the set of extendable neighbors of $x_{\pi(i)}$ (call it $N_{\pi(i)}$).

Find the restriction of row $\pi(i)$ of $Q$ to indices in $N_{\pi(i)}$ (call it $ Q_{\pi(i)}[N_{\pi(i)}]$).

Let $i^*$ be a random index in $N_{\pi(i)}$ drawn with respect to  $Q_{\pi(i)}[N_{\pi(i)}]$.

$M=M\cup{(x_{\pi(i)},i^*)}$.

$p_{\pi,Q}(M)= p_{\pi,Q}(M)\times \frac{ Q_{\pi(i),i^*}}{\sum_{j\in N_{\pi(i)} Q_{\pi(i),j}}}$.
}

\textbf{Input:} $M$, $p_{\pi,Q}(M)$.
\caption{Sequential Importance Sampling of Perfect Matchings}
\end{algorithm}
\begin{remark}
Step 5 of the algorithm might seem challenging to implement. In Appendix \ref{sec: fast implmnt SIS}, we show a fast implementation of it using the Dulmage-Mendelsohn decomposition \cite{matching-theory}.
\end{remark}

To count the number of perfect matchings, let $(M_1,p_{\pi_1,Q}(M_1)),\ldots, (M_N,p_{\pi_N,Q}(M_N))$ be outputs of $N$ runs of Algorithm \ref{alg: SIS-matching} with $Q$ as an input. Then recall the estimator for the number of perfect matchings \eqref{eq: matching estimator},
\[I_N=\frac{1}{N}\sum_{i=1}^N\frac{1}{p_{\pi_i,Q}(M_i)}.\]
The key observation is that $I_N$ is an unbiased estimator on the number of perfect matchings $|\chi_G|$.  This observation is a basic step that has been used in different applications of sequential importance sampling \cite{Diaconis2018SequentialIS,blitzstein2010}.
\begin{prop}
Given a bipartite graph $G$, assume that $Q_e>0$ for all edges $e$.  Define $I_N$ as above. Then
\[\mathbb E [I_N]=|\chi_G|,\]
where the expectation is over the randomness of samples generated by Algorithm \ref{alg: SIS-matching}.
\end{prop}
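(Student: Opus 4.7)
The plan is to establish unbiasedness for a single run of the algorithm, and then conclude by linearity of expectation. That is, I would show $\mathbb{E}[1/p_{\pi,Q}(M)] = |\chi_G|$ where $(M, p_{\pi,Q}(M))$ is the output of a single run of Algorithm \ref{alg: SIS-matching}; linearity immediately gives $\mathbb{E}[I_N] = |\chi_G|$.

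To compute $\mathbb{E}[1/p_{\pi,Q}(M)]$, I would condition on the random permutation $\pi$. Once $\pi$ is fixed, the algorithm defines a probability distribution on $\chi_G$: call it $p_{\pi,Q}(\cdot)$. The key step is to verify two properties of this conditional distribution. First, for every fixed $\pi$, the algorithm always terminates with some $M \in \chi_G$ (it never fails), because at each step it restricts to extendable edges by construction. This gives $\sum_{M \in \chi_G} p_{\pi,Q}(M) = 1$. Second, for every fixed $\pi$ and every $M \in \chi_G$, we have $p_{\pi,Q}(M) > 0$. This is where the hypothesis $Q_e > 0$ enters: at step $i$, the partial matching $M'$ built so far is a subset of $M$ (by induction on $i$), so the edge $(x_{\pi(i)}, y_{M(\pi(i))})$ extends $M'$ to the perfect matching $M$ and is therefore $M'$-extendable; it also receives positive weight because $Q_{\pi(i), M(\pi(i))} > 0$.

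Combining these two facts:
\[\mathbb{E}\!\left[\frac{1}{p_{\pi,Q}(M)} \,\bigg|\, \pi\right] = \sum_{M \in \chi_G} p_{\pi,Q}(M) \cdot \frac{1}{p_{\pi,Q}(M)} = |\chi_G|.\]
Since this holds for every $\pi$, taking expectation over $\pi$ yields $\mathbb{E}[1/p_{\pi,Q}(M)] = |\chi_G|$, and then
\[\mathbb{E}[I_N] = \frac{1}{N} \sum_{i=1}^N \mathbb{E}\!\left[\frac{1}{p_{\pi_i,Q}(M_i)}\right] = |\chi_G|.\]

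There is no real obstacle here; the only subtlety worth pinning down carefully is the inductive argument that the edge of a target matching $M$ remains extendable throughout the run, which justifies both non-vanishing of $p_{\pi,Q}(M)$ and the normalization. This is the standard unbiasedness identity for sequential importance samplers, specialized to the matching setting as in \cite{Diaconis2018SequentialIS, blitzstein2010}.
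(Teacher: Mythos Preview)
Your proof is correct and follows essentially the same approach as the paper: condition on $\pi$, use that $p_{\pi,Q}(M)>0$ for every $M\in\chi_G$ (from the hypothesis $Q_e>0$), and sum the identity $p_{\pi,Q}(M)\cdot p_{\pi,Q}(M)^{-1}=1$ over $\chi_G$. The only thing the paper adds that you omit is the trivial case $\chi_G=\emptyset$, where the algorithm finds no extendable edge at the first step and the estimator is taken to be $0$.
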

\begin{proof}
When there is no perfect matching in $G$, the algorithm will find an empty set of extendable edges, first time that it reaches on line 5. Then it always returns $0$, which is an unbiased estimator. For the case that there exists a perfect matching in $G$, the result is immediate by  the linearity of expectation,
\begin{align*}
  \mathbb E [I_N]&=\mathbb{E}[\frac{1}{p_{\pi_1,Q}(M_1)}]
  \\&=\sum_{\pi\in S_n, M\in\chi_G}\frac{1}{n!}p_{\pi,Q}(M)\frac{1}{p_{\pi,Q}(M)}
  \\&=\sum_{\pi\in S_n, M\in\chi_G}\frac{1}{n!}=|\chi_G|.  
\end{align*}
Note that the second equality holds because for any $\pi\in S_n$ and any perfect matching $M$, $p_{\pi,Q}(M)>0$, since by the assumption, $Q_e>0$ for any edge $e$ in $M$.   
\end{proof}


The above proposition proves that the algorithm can be applied to, and gives an unbiased estimator for the number of perfect matchings, for {\em any} bipartite graph $G$.  In what follows, we focus on the case where $G$ is dense and bound the number of samples, proving Theorem \ref{thm: PTAS dense graph}. 
Recall that for some constant $\frac{1}{2}>\lambda>0$, the  bipartite graph $G(X,Y)$ with $|X|=|Y|=n$  is called  $\lambda$-dense if the degree of each vertex is at least $(\frac{1}{2}+\lambda)n$.    For the rest of this section we assume that $G$ is a $\lambda$-dense graph with the adjacency matrix $A$.

Also, note that the variance of the estimator, and the number of samples needed to obtain a certain degree of accuracy, depends on the input to the algorithm $Q$. 
We use the doubly stochastic scaling of the adjacency matrix $Q^A$, 
where
\begin{equation*}
    Q^A=D_\alpha A D_\beta,\qquad \text{$D_\alpha$ and $D_\beta$ are diagonal matrices.}
\end{equation*}
 Finding a $(1-\epsilon)$-approximation of the doubly stochastic matrix is possible in time $\Tilde{O}(m+n^{4/3})$, where $m$ is the number of edges and $\Tilde{O}$ hides a poly-logarithmic factor of $n$ and $1/\epsilon$ \cite{allenzhu2017faster}.

Let $\mu$ be the sampling distribution resulting from Algorithm \ref{alg: SIS-matching} with $Q^A$ as the input.
The following key lemma, which will be used in the proof of Theorem \ref{thm: PTAS dense graph},  gives an upper bound on the KL-divergence from $\mu$ to the uniform distribution.

\begin{lemma} \label{lm: bnd kl-dense}
Given a $\lambda$-dense graph $G$, with $Q^A$ as defined in \eqref{eq: DS scaling}, let $\mu$ be the probability distribution  that a perfect matching is generated by Algorithm \ref{alg: SIS-matching} with $Q^A$ as an input, and $\nu$ be the uniform distribution over the set of perfect matchings in $G$. Then there exists a constant $N_\lambda>0$ such that for $n\geq N_\lambda$,
\[D_{KL}(\nu||\mu)\leq (\frac{1}{8\lambda}-\frac{1}{4})\log n+\frac{1}{\lambda}+\frac{1}{\lambda}\log(\frac{1}{2\lambda}).\]
\end{lemma}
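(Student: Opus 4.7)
Because $\nu$ is uniform on $\chi_G$,
\[ D_{KL}(\nu\|\mu) \;=\; -\log|\chi_G| \;-\; \mathbb{E}_\nu[\log\mu(M)]. \]
Algorithm~\ref{alg: SIS-matching} draws its ordering $\pi$ uniformly at random, so $\mu(M) = \mathbb{E}_\pi[p_{\pi,Q^A}(M)]$; Jensen's inequality then gives
\[ D_{KL}(\nu\|\mu) \;\leq\; -\log|\chi_G| + \mathbb{E}_\nu \mathbb{E}_\pi\bigl[-\log p_{\pi,Q^A}(M)\bigr]. \]

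The key structural observation is that $Q^A_{ij} = \alpha_i A_{ij}\beta_j$, so for every $M\in\chi_G$ the product $\prod_{e\in M}Q^A_e = \prod_i\alpha_i\prod_j\beta_j$ is the same constant, and therefore $per(Q^A) = |\chi_G|\cdot\prod_i\alpha_i\prod_j\beta_j$. Writing
\[ p_{\pi,Q^A}(M) \;=\; \frac{\prod_i\alpha_i\prod_j\beta_j}{\prod_{i=1}^n S_{\pi,i}(M)}, \qquad S_{\pi,i}(M) := \sum_{j\in N_{\pi(i)}}Q^A_{\pi(i),j}, \]
with $S_{\pi,i}(M)$ the normalizer appearing on line~8 of Algorithm~\ref{alg: SIS-matching}, the previous bound collapses to
\[ D_{KL}(\nu\|\mu) \;\leq\; -\log per(Q^A) \;+\; \mathbb{E}_\nu\mathbb{E}_\pi\!\left[\sum_{i=1}^n \log S_{\pi,i}(M)\right]. \]

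For the first term, the Van der Waerden / Egorychev--Falikman inequality for doubly stochastic matrices yields $-\log per(Q^A)\leq n + O(\log n)$ after Stirling. Controlling the second term is where the $\lambda$-density enters: each vertex having degree at least $(\tfrac12+\lambda)n$ ensures, via Hall-type reasoning, that the extendability constraint removes only a small fraction of the column-weight from $S_{\pi,i}(M)$, while the doubly-stochastic scaling keeps the remaining row sum close to the ``ideal'' value at which the partial product $\prod_i S_{\pi,i}$ tracks $e^{-n}\,per(Q^A)$. Adapting the partial-product estimates of Huber--Law~\cite{Huber-Law} to this regime, the plan is to show
\[ \mathbb{E}_\nu\mathbb{E}_\pi\!\left[\sum_{i=1}^n \log S_{\pi,i}(M)\right] \;\leq\; -n + \left(\tfrac{1}{8\lambda}-\tfrac14\right)\log n + O_\lambda(1), \]
after which the two $n$'s cancel and the lemma follows.

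The hard part is this last estimate, which has to hold uniformly in the step index $i$ and in the configuration of already-matched vertices. Two competing sources of error must be tracked: (i) the weight lost from $S_{\pi,i}(M)$ due to non-extendable columns, which worsens as $i\to n$ and as $\lambda\downarrow 0$; and (ii) the deviation of Sinkhorn scaling from the ``true'' conditional distribution (ratios of sub-permanents) that would produce exactly uniform matchings. The $\lambda$-dense hypothesis keeps both errors under control, and the exponent $\tfrac{1}{8\lambda}$ in the final bound reflects the rate at which they accumulate as $\lambda$ shrinks. Huber--Law originally used these partial-product bounds inside a very different (Markov-chain based) algorithm; the challenge here is to repackage them so that they feed directly into the KL-divergence bound needed by Theorem~\ref{thm: num-samples}.
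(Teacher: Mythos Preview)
Your decomposition
\[
D_{KL}(\nu\|\mu)\;\leq\;-\log per(Q^A)\;+\;\mathbb{E}_\nu\mathbb{E}_\pi\Bigl[\sum_{i=1}^n\log S_{\pi,i}(M)\Bigr]
\]
is correct and is in fact a clean way to start; the paper arrives at an equivalent reduction via Proposition~\ref{prop: kl reformulation}. But two things are off from there.

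First, your discussion of the extendability constraint points in the wrong direction. You want an \emph{upper} bound on $\sum\log S_{\pi,i}$, and since $S_{\pi,i}$ sums only over extendable columns (a subset of the still-unmatched ones), dropping extendability and summing over all unmatched columns already gives a larger quantity. So ``weight lost to non-extendable columns'' is not an error term to control; it is a free relaxation. The paper uses exactly this relaxation (Proposition~\ref{prop: kl reformulation}), after which $S_{\pi,i}$ depends only on the permutation $\pi$ and on $M$ through which columns remain, not on any extendability structure.

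Second, and more seriously, the step you label ``adapting Huber--Law partial-product estimates'' is the entire content of the lemma, and Huber--Law contains no such estimate. The only input the paper takes from \cite{Huber-Law} is the entrywise bound $Q^A_{ij}\leq\frac{1}{2\lambda n}$ (Lemma~\ref{lm: Q-entries}). The actual argument is different: one shows (Lemma~\ref{lm: KL convex}) that, as a function of two entries in a row of $Q$ with fixed sum, the upper bound from Proposition~\ref{prop: kl reformulation} is \emph{convex}; this lets you push $Q^A$ to an extremal matrix $Q^*$ whose $i$th row has $\lfloor n/r_i\rfloor$ entries equal to $r_i/n$ (where $r_i=n\max_j Q^A_{ij}$) and the rest zero. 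For such $Q^*$ the partial sums are handled by a Bregman--Minc type calculation, giving essentially $\sum_i\frac{r_i}{n}\log\bigl((\lfloor n/r_i\rfloor+1)!\bigr)$. Combining this with Van der Waerden and Stirling, everything reduces to bounding $\frac{1}{n}\sum_i r_i$, and the constant $\frac{1}{8\lambda}-\frac14$ emerges from the estimate $\frac{1}{n}\sum_i r_i\leq\frac12+\frac{1}{4\lambda}$, which in turn uses $r_i\leq\frac{1}{2\lambda}$ from Lemma~\ref{lm: Q-entries} on the rows not adjacent to the column of maximal $\beta$. None of this machinery---the convexity reduction, the Bregman-type bound, or the $\sum r_i$ computation---appears in your sketch, and without it there is no route from your displayed inequality to the claimed $-n+(\tfrac{1}{8\lambda}-\tfrac14)\log n$.
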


We will give the proof of Lemma \ref{lm: bnd kl-dense}
in Section \ref{sec: kl divergence}. Now we are ready to prove Theorem \ref{thm: PTAS dense graph}.

\begin{proof}[Proof of Theorem \ref{thm: PTAS dense graph}]
Let $L$ be as defined in Theorem \ref{thm: num-samples}. Then by Lemma \ref{lm: bnd kl-dense}
\[L=D_{KL}(\nu||\mu) \leq (\frac{1}{8\lambda}-\frac{1}{4})\log n+\frac{1}{2\lambda}+2\log(\frac{1}{2\lambda}).\]
We apply Theorem \ref{thm: num-samples} for $N=e^{L+t}$, which gives
\[\mathbb{E}(\frac{|I_N-M(G)|}{M(G)})\leq e^{-t/4}+2\sqrt{\mathbb{P}(\log\big(\rho(Y)\big)>L+t/2)}\leq e^{-t/4}+2\sqrt{\frac{L+2}{L+t/2}},\]
where the second inequality is by Lemma \ref{lm: rho concentration}. Now, let $\tilde \epsilon^2=\frac{L+2}{L+t/2}$. {
Then by solving for $t$, we have $t=2\frac{L+2}{\epsilon^{2}}-2L$. This implies, $N=e^{t+L}=C_{\lambda,\tilde\epsilon} n^{\frac{1-2\lambda}{4\lambda}(\tilde\epsilon^{-2}-1)}$} samples are enough for
\[\mathbb{E}(\frac{|I_N-M(G)|}{M(G)})\leq e^{-t/4}+2\sqrt{\frac{L+2}{L+t}}\leq e^{-t/4}+2\epsilon,\]
where 
{$C_{\lambda,\tilde\epsilon}=\exp\Big(\tilde\epsilon^{-2}(\frac{1}{\lambda}+4\log(\frac{1}{2\lambda})+4)-\frac{1}{\lambda}+4\log(\frac{1}{2\lambda})\Big)$ is a constant that is independent of $n$. Note that $e^{-t/4}=o(1)$, and for large enough $n$ it can be bounded by $\tilde\epsilon^3/4$. Now,  by defining $\epsilon =2\tilde \epsilon+\tilde\epsilon^3$, we have $\tilde\epsilon^2-1\leq\epsilon^2/4$,
which proves the upper bound for $N$ in the theorem.}

{To finish the proof, we need to show that the running time for drawing each sample is $O(n^3)$. This is postponed to Proposition~\ref{prop: runtime} in the appendix.}
\end{proof}

As a result of Theorem \ref{thm: PTAS dense graph}, we have a theoretical upper bound on the number of samples needed by SIS to approximate the number of matchings in dense graphs. Algorithm \ref{alg: SIS-matching} can be used beyond estimating the number of perfect matchings. Indeed Lemma \ref{lm: bnd kl-dense} allows us to estimate the expected value of a wide range of statistics under the uniform distribution on perfect matchings.

For a function $f$ over the space of matchings, suppose that we want to estimate $I(f)=\mathbb E_{Y\sim\nu}(f(Y))$, where $\nu$ is the uniform distribution over prefect matchings. As before, let $(M_1,X_1),\ldots, (M_N,X_N)$ be $N$ independent  samples of Algorithm \ref{alg: SIS-matching}. Then define the estimator
\[J_N(f)=\frac{\sum_{i=1}^Nf(M_i)X_i^{-1}}{\sum_{i=1}^N X_i^{-1}}.\]
Recall that $||f||_{L^2(\nu)}:=\big(\mathbb E_{Y\sim \nu}(f(Y)^2)\big)^{1/2}$.
The following gives an upper bound on the sufficient number of samples when  $\frac{||f||_{L^2(\nu)}}{I(f)}$ is bounded by a constant. 
\begin{corollary}\label{cor: statistics bnd}
Given $\lambda>0$, let $G$ be a $\lambda$-dense graph of size $2n$. Also let $\mu$, $\nu$, $f$, $I(f)$, $J_N(f)$, be defined as above. Suppose that there exists a constant $C$ such that $||f||_{L^2(\nu)}\leq C I(f)$. Then given any $\epsilon>0$ there exists a constant $C_\lambda$ such that for $N\geq C_\lambda n^{\frac{16-32\lambda}{\lambda}\epsilon^{-4}}$,
\[\mathbb P \Big( \frac{|J_N(f)-I(f)|}{I(f)}\geq C\epsilon\Big)\leq \epsilon.\]
\end{corollary}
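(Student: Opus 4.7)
The plan is to split the self-normalized estimator $J_N(f) = A_N/B_N$, where $A_N = \frac{1}{N}\sum_i f(M_i) X_i^{-1}$ and $B_N = \frac{1}{N}\sum_i X_i^{-1}$, and show that each of the numerator and denominator concentrates around its mean. Note that $B_N$ is exactly the estimator $I_N$ from Theorem~\ref{thm: PTAS dense graph} and is unbiased for $|\chi_G|$, while a short unbiasedness argument analogous to that for $I_N$ shows $\mathbb{E}[A_N] = |\chi_G|\, I(f)$. In the language of Section~\ref{sec: prelim IS}, $B_N/|\chi_G|$ is the importance sampling estimator of the constant $1$ and $A_N/|\chi_G|$ is the importance sampling estimator of $I(f)$. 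If both ratios lie within $O(\epsilon)$ relative error of their targets, then a short calculation with $(1\pm\epsilon)/(1\mp\epsilon)$ shows $|J_N(f) - I(f)|/I(f) \leq O(C\epsilon)$.

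First I would apply Theorem~\ref{thm: num-samples} twice, once with the constant function $1$ (to control $B_N$) and once with $f$ (to control $A_N$), combined with Lemma~\ref{lm: rho concentration} to control the tail of $\log \rho(Y)$. In both applications $L = D_{KL}(\nu\|\mu)$ is the same quantity bounded by Lemma~\ref{lm: bnd kl-dense}. Setting $\epsilon'^2 = (L+2)/(L+t/2)$ as in the proof of Theorem~\ref{thm: PTAS dense graph} and forcing $\epsilon'$ to be of order $\epsilon^2/C$ (rather than $\epsilon$), the theorem gives
\[
\mathbb{E}\Bigl|\tfrac{A_N}{|\chi_G|} - I(f)\Bigr| \leq \|f\|_{L^2(\nu)} \cdot O(\epsilon^2/C) \leq O(\epsilon^2) \cdot I(f),
\qquad
\mathbb{E}\Bigl|\tfrac{B_N}{|\chi_G|} - 1\Bigr| \leq O(\epsilon^2),
\]
provided $N \geq e^{L+t}$. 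Because $t = 2(L(1-\epsilon'^2)+2)/\epsilon'^2 = \Theta(L/\epsilon^4)$ and $L \leq \frac{1-2\lambda}{8\lambda}\log n + O_\lambda(1)$, this translates to the sample complexity $N \geq C_\lambda n^{(1-2\lambda)/(8\lambda) + \epsilon^{-4}}$ announced in the corollary.

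Next, I would apply Markov's inequality to each expectation bound to promote it into a probability bound: $\mathbb{P}(|A_N/|\chi_G| - I(f)| \geq C\epsilon\, I(f)/2) \leq 2\epsilon$ and similarly $\mathbb{P}(|B_N/|\chi_G| - 1| \geq \epsilon/2) \leq 2\epsilon$, where the constant $C$ from $\|f\|_{L^2(\nu)} \leq C\, I(f)$ is absorbed. A union bound over the two failure events gives total failure probability at most $4\epsilon$, and on the complementary event the ratio calculation $J_N(f)/I(f) \in [(1-C\epsilon/2)/(1+\epsilon/2),\, (1+C\epsilon/2)/(1-\epsilon/2)]$ yields $|J_N(f)-I(f)|/I(f) \leq C\epsilon$ for $\epsilon$ sufficiently small (absorbing the $1/(1-\epsilon/2)$ factor into the constant).

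The main obstacle is bookkeeping: tracking how the desired relative precision $\epsilon$ on the ratio $J_N$ propagates, through the ratio-of-two-estimators structure, into a required precision of order $\epsilon^2$ on each of $A_N$ and $B_N$ separately, which in turn forces the Chatterjee--Diaconis threshold parameter $t$ to scale like $L/\epsilon^4$ instead of $L/\epsilon^2$. The factor $\|f\|_{L^2(\nu)}/I(f) \leq C$ is what lets us quantify the ratio-of-estimators analysis uniformly; without such a second-moment control, a sub-linear estimator variance could not be guaranteed. Once the dependence on $\epsilon$ and $L$ is correctly extracted, the rest is a direct combination of Theorem~\ref{thm: num-samples}, Lemmas~\ref{lm: rho concentration} and~\ref{lm: bnd kl-dense}, and Markov's inequality.
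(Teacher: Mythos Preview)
Your argument is correct but takes a different route from the paper. The paper does not split $J_N$ into numerator and denominator; instead it invokes Theorem~1.2 of Chatterjee--Diaconis \cite{diaconis-chatterjee} (not Theorem~1.1, which is the one quoted as Theorem~\ref{thm: num-samples}), a ready-made concentration inequality for the \emph{self-normalized} estimator $J_N$: setting $\epsilon'=\bigl(e^{-t/4}+2\sqrt{\mathbb P(\log \rho (Y)>L+t/2)}\bigr)^{1/2}$, one has directly
\[
\mathbb P\Bigl(|J_N(f)-I(f)|\geq \tfrac{2\|f\|_{L^2(\nu)}\epsilon'}{1-\epsilon'}\Bigr)\leq 2\epsilon'.
\]
The paper then just plugs in Lemma~\ref{lm: rho concentration} with $t\asymp L/\epsilon^{4}$ to force $\epsilon'\leq 2\epsilon$, and Lemma~\ref{lm: bnd kl-dense} to bound $L$. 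Your approach instead reproves the relevant content of that external theorem from scratch: you apply Theorem~\ref{thm: num-samples} separately to the numerator $A_N$ and the denominator $B_N$, lose one power of $\epsilon$ converting each expectation bound to a probability bound via Markov, and union-bound. This is more elementary and entirely self-contained within the tools actually stated in the paper, and it makes visible \emph{why} the sample exponent degrades from $\epsilon^{-2}$ (for the unnormalized estimator in Theorem~\ref{thm: PTAS dense graph}) to $\epsilon^{-4}$ here. The paper's route is shorter but hides that mechanism inside the cited result. One minor simplification to your write-up: you need not push the Chatterjee--Diaconis error down to $O(\epsilon^2/C)$; taking it $O(\epsilon^2)$ already suffices, because in the Markov step on $A_N$ the threshold $C\epsilon I(f)/2$ and the factor $\|f\|_{L^2(\nu)}\leq C I(f)$ cancel the $C$, keeping $C_\lambda$ independent of $C$.
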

\begin{proof}
By Theorem 1.2 in \cite{diaconis-chatterjee}, if we let  $\epsilon'=(e^{-t/4}+2\sqrt{\mathbb P(\log \rho (Y)>L+t/2)})^{1/2}$, 
then 
\[\mathbb P \Big( |J_N(f)-I(f)|\geq \frac{2||f||_{L^2(\nu)}\epsilon'}{1-\epsilon'}\Big)\leq 2\epsilon'.\]
By Lemma~\ref{lm: rho concentration}, and for $t=128\frac{L+2}{\epsilon^{4}}-2L$
$$\epsilon'\leq \Big(e^{-t/4}+2\sqrt{\frac{L+2}{L+t/2}}\Big)^{1/2}\leq \frac{\epsilon}{2}.$$
Then since $||f||_{L^2(\nu)}\leq CI(f)$,
\[\mathbb P \Big( \frac{|J_N(f)-I(f)|}{I(f)}\geq C\epsilon\Big)\leq \epsilon.\]
Then Lemma \ref{lm: bnd kl-dense} implies the desired upper bound on the number of samples, $N= O(n^{\frac{16-32\lambda}{\lambda}\epsilon^{-4}})$.
\end{proof}

Before proceeding with the rest of the proofs we give two examples of statistics $f$ that satisfy the assumptions in Corollary~\ref{cor: statistics bnd}. 

\textbf{ Example 1 (number of switches):} Consider a random bipartite graph $G$, where each side has $n$ nodes, and edges are drawn independently with a constant probability of $\alpha>1/2$. Note that the average degree of each node is $\alpha n$, and the graph is dense with high probability.
For a perfect matching $M$, define $f(M)$ as the number of pairs of edges $e_1,e_2\in M$ that can be switched to form a new perfect matching. Equivalently, $f(M)$ is the number of $4$-cycles in the graph with exactly two edges in $M$.

We claim that for large enough $n$, $f$ satisfies the assumption of  Corollary~\ref{cor: statistics bnd}. To see this, we need to prove that there exists some $C>0$ such that $\mathbb E[f(M)^2]\leq C\mathbb E[f(M)]^2$.
Define $S_1,S_2\ldots S_{{n\choose 2}^2}$ as the indicator variables for all the possible 4-cycles in a complete bipartite graph.
Then for the perfect matching $M$, $S_i=1$ when all the edges of $S_i$ appear in $G$, with two of those edges appearing in $M$.
By symmetry, $\mathbb{E}[S_i]$ takes the same value for all $i$. Let $q_n$ be this probability,  $q_n=\mathbb{E}[S_1]$. Since with high probability $G$ is dense after removing one edge, by applying the lower bound in Lemma~\ref{lm: P-bound} for both edges of $S_i$ in $M$, one can conclude there exists a constant $c>0$ such that for all $1 \leq i\leq {n\choose 2}^2$,
$$\mathbb{E}[S_i]\geq \frac{c}{n^2}(1+o(1)).$$
Therefore, $\mathbb E[f(M)] = {n\choose 2}^2\mathbb{E}[S_1]$ is at least $C'n^2$ for some constant $C'>0$.

Moreover, 
\[\mathbb E[f(M)^2] = \sum_{i=1}^{{n\choose 2}^2}\mathbb E[S_i]+ \sum_{i=1}^{{n\choose 2}^2}\sum_{j> i}^{{n\choose 2}^2}2\mathbb E[S_i \text{ and } S_j].\]
To compute $\mathbb E[S_i \text{ and } S_j]$, consider two cases. If $S_i$ and $S_j$ have no edges in common, then $\mathbb E[S_i \text{ and } S_j]=\mathbb E[S_i]\mathbb E[S_i| S_j]=q_nq_{n-2}$, for $q_n$ as defined above. The second case is when $S_i$ and $S_j$ has exactly one edge in common. In this scenario, again the remaining graph after removing $S_i$ is dense with high probability and by the upper bound in Lemma~\ref{lm: P-bound}, there exists a constant $c'>0$,
\[\mathbb E[S_i \text{ and } S_j]=\mathbb E[S_i]\mathbb E[S_i| S_j]\leq q_n\frac{c'}{n}(1+o(1))\]
Then by taking into account the number of edges that $S_i$ and $S_j$ have in common,
\[\mathbb E[f(M)^2] \leq {n\choose 2}^2q_n+2{n\choose 4}^2q_nq_{n-2}+6{n\choose 3}^2q_n\frac{c'}{n}(1+o(1)).\]
Thus, $\mathbb E[f(M)^2]$ is upper bounded by $C''n^4$ for some constant $C''>0$ and all large enough $n$.
Now, the claim follows by comparing $\mathbb E[f(M)]^2 $ and $\mathbb E[f(M)^2]$ . 

{\textbf{ Example 2 (colored graph):} Consider $G$ as a dense bipartite graph in which each edge is either red or blue. Define $f(M)$ to be the number of red edges in the perfect matching $M$. We claim that $f$ satisfies the assumption of  Corollary~\ref{cor: statistics bnd} whenever the number of red edges grows at least linearly in $n$. }

To see this, assume the graph has $k$ red edges $e_1,e_2,\ldots,e_k$, and let $X_1,X_2,\ldots X_k$ be the indicator that each red edge appears in  a perfect matching. Then by Lemma~\ref{lm: P-bound},  $\mathbb E[X_i]=\mathbb P(\text{edge $i$ is in a matching})\in(\frac{1}{n},\frac{c}{n})$  for some constant $c>0$ independent from $n$. On the other hand,
\begin{align*}
    \mathbb E[f(M)^2] &= \mathbb E[(\sum_{i=1}^k X_i)^2]\\
    &= \sum_{i=1}^k \mathbb E[X_i^2]+\sum_{i\neq j} \mathbb E[X_iX_j]\\
    &= \sum_{i=1}^k \mathbb P(e_i\in M)+\sum_{i\neq j} \mathbb P(e_i\in M)\mathbb P(e_j\in M|e_i\in M).
\end{align*}
Now, note that after removing $e_i$, the graph is still dense, and again by the use of Lemma \ref{lm: P-bound},  $\mathbb P(e_j\in M|e_i\in M)\in (\frac{1}{n},\frac{c'}{n})$. So, there exists some constant $\alpha>0$ such that $$\mathbb P(e_i\in M)\mathbb P(e_j\in M|e_i\in M)\leq \alpha \mathbb P(e_i\in M)\mathbb P(e_j\in M).$$  Therefore,
\[\mathbb E[f(M)^2]\leq c\frac{k}{n}+c^2\big(\frac{k}{n}\big)^2\alpha.\]
Similarly, we can use the lower bound in  Lemma \ref{lm: P-bound}
\begin{align*}
    \mathbb E[f(M)]^2 &= \big(\sum_{i=1}^k\mathbb E[ X_i]\big)^2\\
    & \geq \big(\frac{k}{n}\big)^2,
\end{align*}
which proves that $f$ satisfies the condition of Corollary~\ref{cor: statistics bnd} when $k$ is at least linear in $n$.

The above example shows that we can estimate the number of red edges in a uniform random perfect matching. This example may be applied in real-world situations, such as matching riders to drivers, where red edges indicate poor matches and blue edges indicate good matches. Then we can evaluate the quality of a given matching by comparing it to the SIS estimator.

\subsection{Bounding the KL-divergence for Dense Graphs}\label{sec: kl divergence}

The purpose of this section is to bound the KL-divergence of $\mu$ from the uniform distribution $\nu$ on the set of perfect matchings. The main idea is to show that if the entries of $Q$ are small, then $D_{KL}(\nu||\mu)$ is a convex function of the entries in $Q$ (see Lemma \ref{lm: KL convex}). Convexity of $D_{KL}(\nu||\mu)$ enables us to use the Bregman's inequality along with the Van der Waerden lower bound on $\per(A)$ to get a logarithmic upper bound on $D_{KL}(\nu||\mu)$.

Let $P$ be the matrix of marginal probabilities, i.e., for an edge $e=(u,v)$, let $P_e= P_{uv}$ be equal to the probability that the edge $e$ appears in a perfect matching chosen uniformly at random. Before proving the convexity of $D_{KL}(\nu||\mu)$,  we need Lemmas \ref{lm: Q-entries} and \ref{lm: P-bound} to show that   $Q_e^A/P_e$ is bounded from above by some constant $c$. 
\begin{lemma}[Lemma 4.4 in \cite{Huber-Law}]\label{lm: Q-entries}
Let $G(X,Y)$ be a $\lambda$-dense bipartite graph with the adjacency matrix $A$, and $Q^A$ its doubly stochastic scaling. Then for all $e\in[n]\times[n]$, $Q^A_e\leq \frac{1}{2\lambda n}$.
\end{lemma}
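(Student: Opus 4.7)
The plan is to exploit the factorization $Q^A_{ij} = \alpha_i \beta_j$ (valid for edges $(i,j)$ of $G$) together with the row and column sum constraints $\sum_{i \in N(j)} \alpha_i = 1/\beta_j$ and $\sum_{j \in N(i)} \beta_j = 1/\alpha_i$ coming from double stochasticity of $Q^A$. Let $(i_0,j_0)$ be an edge maximizing $Q^A$ across all edges of $G$, call this maximum $M$, and aim to show $M \leq 1/(2\lambda n)$.

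The two sum identities immediately give
\[ \frac{1}{M} \;=\; \frac{1}{\alpha_{i_0}\beta_{j_0}} \;=\; \Bigl(\sum_{i \in N(j_0)} \alpha_i\Bigr)\Bigl(\sum_{j \in N(i_0)} \beta_j\Bigr) \;=\; \sum_{(i,j)\in N(j_0)\times N(i_0)} \alpha_i \beta_j \;\geq\; \sum_{\substack{(i,j)\in E \\ i\in N(j_0),\, j\in N(i_0)}} Q^A_{ij}, \]
where the last inequality keeps only the edge-terms in the rectangle (for which $\alpha_i\beta_j = Q^A_{ij}$). I would then estimate this edge-sum by a swap-of-summation plus a leakage computation:
\[ \sum_{\substack{(i,j)\in E \\ i\in N(j_0),\, j\in N(i_0)}} Q^A_{ij} \;=\; \sum_{i\in N(j_0)} \Bigl(\sum_{j\in N(i)} Q^A_{ij} \,-\, \sum_{j\in N(i)\setminus N(i_0)} Q^A_{ij}\Bigr) \;=\; |N(j_0)| \,-\, \sum_{j\notin N(i_0)}\sum_{i\in N(j)\cap N(j_0)} Q^A_{ij}. \]
The rightmost double sum is bounded by $\sum_{j\notin N(i_0)} \sum_{i\in N(j)} Q^A_{ij} = |Y\setminus N(i_0)| \leq (\tfrac12-\lambda)n$, using the column-sum identity and the $\lambda$-density of $i_0$. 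Combining with $|N(j_0)| \geq (\tfrac12+\lambda)n$ gives $1/M \geq 2\lambda n$, which is exactly the desired conclusion.

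The main subtlety, and what I expect to be the key point requiring care, is the choice of how to bound the leakage. If one upper bounds the leaked mass pointwise via $Q^A_{ij}\leq M$ (giving at most $(\tfrac12-\lambda)n\cdot M$ per row), one ends up with a self-referential quadratic inequality in $M$ whose discriminant is negative for small $\lambda$ and produces no useful bound. The correct move is to refuse to use the entry-max at all for the leakage, and instead swap the order of summation so that the leakage is controlled by column sums (each contributing at most $1$ rather than $M$ times a count). This turns the estimate from self-referential into a direct inequality, and is exactly where the $\lambda$-density hypothesis pays off as a clean surplus of $2\lambda n$ of in-rectangle edge-mass over leaked-column-mass.
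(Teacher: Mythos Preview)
Your proof is correct. The paper does not give its own proof of this lemma; it is quoted verbatim as Lemma~4.4 of Huber and Law~\cite{Huber-Law} and used as a black box, so there is no in-paper argument to compare against. Your self-contained argument is clean and complete: the identity $1/M=\bigl(\sum_{i\in N(j_0)}\alpha_i\bigr)\bigl(\sum_{j\in N(i_0)}\beta_j\bigr)$ followed by restricting to edge-terms and splitting off the leakage is the natural route, and the step you single out---swapping the order of summation so that the leaked mass is bounded by $\sum_{j\notin N(i_0)}\sum_{i\in N(j)}Q^A_{ij}=|Y\setminus N(i_0)|\le(\tfrac12-\lambda)n$ via full column sums rather than via the entrywise maximum $M$---is exactly what makes the inequality close without circularity. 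The only cosmetic point is that the lemma is stated for all $e\in[n]\times[n]$, not just edges, but for non-edges $Q^A_e=0$ and the bound is trivial, so your reduction to the edge-maximizer is fine.
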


\begin{lemma}\label{lm: P-bound}
Let $G(X,Y)$ be a $\lambda$-dense bipartite graph and $P$ be the matrix of matching marginals. Then there exists a constant $c_\lambda>0$ independent from $n$ such that for all $e\in E(G)$ we have $\frac{c_\lambda}{n}\leq P_e\leq\frac{1}{\lambda n} $.
\end{lemma}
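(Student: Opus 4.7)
The plan is to prove the upper and lower bounds separately, using quite different techniques: a combinatorial switching argument for the upper bound, and the doubly stochastic scaling plus permanent inequalities for the lower bound.

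For the upper bound $P_e \leq 1/(\lambda n)$, I would use a $2$-edge switching argument. Let $e = (u,v)$. Given any perfect matching $M$ containing $e$, each $v' \in N(u) \setminus \{v\}$ for which $u' := M^{-1}(v')$ lies in $N(v)$ yields a new perfect matching $M' = (M \setminus \{(u,v),(u',v')\}) \cup \{(u,v'),(u',v)\}$ that does \emph{not} contain $e$. By inclusion--exclusion in $Y$, the number of such $v'$ is at least $|N(u)| + |N(v)| - n - 1 \geq 2\lambda n - 2$, and the map $(M,v') \mapsto M'$ is injective because the reverse $2$-swap pivoted on the distinguished edge $(u,v)$ reconstructs $M$ uniquely. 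Hence
\[
\#\{M \in \chi_G : M \ni e\}\,(2\lambda n - 2) \;\leq\; \#\{M \in \chi_G : M \not\ni e\} \;\leq\; |\chi_G|,
\]
yielding $P_e \leq 1/(2\lambda n - 1) \leq 1/(\lambda n)$ for $n$ sufficiently large in terms of $\lambda$.

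For the lower bound $P_e \geq c_\lambda/n$, I would pass through the doubly stochastic scaling. Using $A = D_\alpha^{-1} Q^A D_\beta^{-1}$ in the formula $P_e = \mathrm{per}(A^{(e)})/\mathrm{per}(A)$ gives the clean identity
\[
P_e \;=\; Q^A_e \cdot R_e, \qquad R_e \;:=\; \frac{\mathrm{per}((Q^A)^{(e)})}{\mathrm{per}(Q^A)}.
\]
It then suffices to establish (a) a lower bound $Q^A_e \geq c'_\lambda / n$ and (b) $R_e \geq c''_\lambda$. For (a), the Sinkhorn equations $\alpha_u \sum_{v' \in N(u)} \beta_{v'} = 1$ together with $\lambda$-density force $\alpha_u$ and $\beta_v$ to be of order $1/\sqrt n$ with two-sided bounds depending only on $\lambda$, which is the natural lower-bound companion to Lemma~\ref{lm: Q-entries}. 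For (b), expand $\mathrm{per}(Q^A)$ along row $u$ by Laplace to get
\[
\frac{1}{R_e} \;=\; \sum_{v' \in N(u)} Q^A_{u,v'} \cdot \frac{\mathrm{per}((Q^A)^{(u,v')})}{\mathrm{per}((Q^A)^{(u,v)})},
\]
so it is enough to bound each ratio $\mathrm{per}((Q^A)^{(u,v')})/\mathrm{per}((Q^A)^{(u,v)}) \leq C_\lambda$ uniformly in $v' \in N(u)$. This I would do by a second switching argument at the level of weighted permutations $\sigma : X \setminus \{u\} \to Y$: for a permutation $\sigma$ omitting $v$, flip the assignment at $\sigma^{-1}(v')$ from $v'$ to $v$ (valid whenever $\sigma^{-1}(v') \in N(v)$, which holds for at least $2\lambda n - 2$ choices by the same inclusion--exclusion as above). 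The multiplicative weight change is $Q^A_{\cdot,v}/Q^A_{\cdot,v'}$, which is $\Theta(1)$ by combining Lemma~\ref{lm: Q-entries} with the lower bound from (a).

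The main obstacle I anticipate is step (b): both showing that the weighted fraction of permutations admitting a valid column-swap is bounded below by a constant, and that the cumulative weight distortion over these swaps stays controlled. Both rely on using $\lambda$-density and the two-sided bounds on $Q^A$ in a careful bookkeeping; in particular, the bound on $Q^A_e$ from the companion to Lemma~\ref{lm: Q-entries} is what prevents the permanent ratios from drifting as one column is exchanged for another.
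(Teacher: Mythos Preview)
Your upper bound argument is correct and is essentially the paper's: a $2$-switch on a perfect matching containing $e=(u,v)$, with injectivity of $(M,v')\mapsto M'$, gives $P_e\le 1/(\lambda n)$ for large $n$. The paper phrases it as a one-to-many map $\chi_{G_e}\to\chi_G$ but the content is identical.

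The lower bound is where your route diverges from the paper, and where the proposal has genuine gaps. The paper never touches $Q^A$ for this lemma; it argues purely combinatorially. It introduces $\Omega_e=\bigcup_{e'}\chi_{G_{\{e,e'\}}}$, shows $|\chi_G|\le 2|\Omega_e|$ by a simple $2$-to-$1$ map, and then bounds $|\Omega_e|\le (n+n/\lambda)|\chi_{G_e}|$ by splitting over whether the removed pair is an edge and using a many-to-many switching for the non-edge case. This yields the explicit constant $c_\lambda=\lambda/(2(\lambda+1))$.

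Your approach has two real holes. First, step~(a): the ``natural lower-bound companion to Lemma~\ref{lm: Q-entries}'' asserting $Q^A_e\ge c'_\lambda/n$ is neither in the paper nor in Huber--Law, and is not as automatic as you suggest. The obvious argument---compare $\sum_{j\in N(u)}\beta_j$ and $\sum_{j\in N(u')}\beta_j$ via the overlap $|N(u)\cap N(u')|\ge 2\lambda n$ and the upper bound $\alpha_i\beta_j\le 1/(2\lambda n)$---only closes when $(1-2\lambda)/(4\lambda)<1$, i.e.\ $\lambda>1/6$. For small $\lambda$ you would need a genuinely different idea, and none is indicated.

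Second, step~(b): you correctly identify it as the main obstacle, but it is more serious than the write-up suggests. Your switching on bijections $\sigma:X\setminus\{u\}\to Y\setminus\{v'\}$ sends $\sigma$ to a bijection omitting $v$ only when $\sigma^{-1}(v)\in N(v')$; controlling the \emph{weighted} mass of such $\sigma$ from below is essentially a statement about matching marginals in the $(n-1)$-vertex graph, i.e.\ the very kind of bound you are trying to prove. Absent (a), you also cannot control the weight distortion $Q^A_{\cdot,v'}/Q^A_{\cdot,v}$. So as written, (b) risks circularity and is not reducible to bookkeeping.

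In short: the upper bound is fine, but for the lower bound the paper's direct combinatorial double-counting avoids both of your obstacles entirely and gives an explicit constant; your route via $Q^A$ would require substantial new lemmas that you have not supplied.
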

\begin{proof} To give the proof we first need the following notations.
Given a graph $H$, let $\chi_H$ be the set of all perfect matchings in $H$. For any set $L\subset[n]\times [n]$, define $G_L$ as the graph constructed by removing  vertices appearing in $L$ from $G$.
 Note that $P_e=\frac{|\chi_{G_e}|}{|\chi_G|}$, where $e = (x,y)$. 
 
 We start by proving the lower bound on $P_e$. There are two cases based on whether or not a perfect matching contains $e$. There is an obvious one-to-one correspondence between every matching of $G$ that contains $e$ and matchings in $\chi_{G_e}$. So, we can write, $$|\chi_G| =  |\chi_{G_e}|+|\{M\in \chi_G: e\not\in M\}|.$$ 
 Next, we find a one-to-many mapping $\phi$ between elements of $\{M\in \chi_G: e\not\in M\}$ and $\chi_{G_e}$. Fix a matching $M'\in \chi_G$ that does not contain $e$. Then consider the vertices  matched to the two endpoints of $e=(x,y)$, which we call $M'(x)$ and $M'(y)$, respectively. We claim that there are at least $\lambda n$ edges $(w,z)\in M'$ such that $(w,M'(x)), (M'(y),z)\in E(G)$. To see this, note that  both $M'(x)$ and $M'(y)$ have at least $n/2+\lambda n$ neighbors, and each of these neighbors is matched to a vertex in $M'$. Hence, there are at least $\lambda n$ vertices in common between $N(M'(x))$ and vertices matched to a vertex in $N(M'(y))$. Now, observe that the vertices $z, w, M'(x), x, y,$ and $M'(y)$ form an alternative cycle. As a result, there are $\lambda n$ alternating cycles of length $6$ containing the edge $e=(x,y)$. 

For each such edge $(w,z)$ as above, the mapping $\phi$ maps $M'$ to the perfect matching in $\chi_{G_e}$ formed by removing $e$ and switching the edges  in the alternating cycles containing $(w,z)$.
Formally, $\phi(M')$ defines a set of perfect matchings in $\chi_{G_e}$,
$$\phi(M')=\{\big(M'\setminus\{(x,M'(x)),(y,M'(y)), (w,z)\}\big)\cup \{(w,M'(x)), (M'(y),z)\}\}.$$
By the above arguments, we know that $\phi$ maps each $M'$ to at least  $\lambda n$ elements of $\chi_{G_e}$. Moreover,  each element of $M_e\in\chi_{G_e}$ is the image of at most $n^2$  perfect matchings in $\chi_G$. Because  fixing the vertices $M'(x)$ and $M'(y)$ will determine the reverse image  of each $M_e\in\chi_{G_e}$  uniquely. Since there are at most $n^2$ possible pairs of $M'(x)$ and $M'(y)$,  
$$|\chi_G| =  |\chi_{G_e}|+|\{M\in \chi_G: e\not\in M\}|\leq |\chi_{G_e}|(1+\frac{n^2}{\lambda n}).$$ 
As a result, 
$P_e=\frac{|\chi_{G_e}|}{|\chi_G|}\geq\frac{1}{1+\frac{n}{\lambda}}$.

It remains to prove the upper bound on $P_e$. Recall that $P_e=\frac{|\chi_{G_e}|}{|\chi_G|}$, where $e=(x,y)$.
We prove  $\lambda n\cdot |\chi_{G_e}| \leq |\chi_G|$, by defining a one-to-many mapping that maps every element of $\chi_{G_e}$ to at least $\lambda n$ distinct elements of $\chi_G$.  Fix $M \in \chi_{G_e}$. By an argument similar to the previous bound, there are at least $\lambda n$ edges $(w,z)\in M$ such that $(x,z), (y,w) \in E(G)$.  For each such edge $(w,z)\in M$, note that    $\big(M\setminus\{(w,z)\}\big)\cup \{(y,w), (x,z)\}\in \chi_G$, so map $M$ to  $\big(M\setminus\{(w,z)\}\big)\cup \{(y,w), (x,z)\}$ for every such edge in addition to $M \cup e$. Now, we need to show that at most one element from $\chi_{G_e}$ is mapped to a matching $M' \in \chi_G$. This is because if $M'$ contains the edge $e$, then $M'$ is the image of $M' \setminus e$, and if $e\not\in M'$ then it is the image of $M' \setminus \{(x,M'(x)), (y,M'(y) \}$. As a result, $\lambda n\cdot |\chi_{G_e}| \leq |\chi_G|$. Therefore,
\[P_e=\frac{|\chi_{G_e}|}{|\chi_G|}\leq \frac{1}{\lambda n}.\]
\end{proof}

We will use the next result to relate $p_{\pi,Q}(M)$ to the matching marginals matrix $P$, which will be useful for bounding the KL-divergence. Define the ordering $<_\pi$ on $\{1,2,\ldots,n\}$ as $i<_\pi j$ iff $\pi(i)<\pi(j)$.

\begin{prop}\label{prop: kl reformulation}
Given a bipartite graph $G(X,Y)$ of size $2n$, let $P$ be the matrix of matching marginals and $Q$ be a nonnegative matrix, such that for any $e\in[n]\times[n]$ if $P_e\neq 0$,  then $Q_e\neq 0$. Then for the samples generated by Algorithm \ref{alg: SIS-matching},
\[\mathbb{E}_{M\sim \nu,\pi\sim S_n}\Big(\log(p_{\pi,Q}(M))\Big)\geq\sum_{e\in[n]\times [n]}P_e\log(Q_{e})- \sum_{i=1}^n\sum_{k=1}^nP_{ik}\mathbb{E}_{\pi\sim S_n}\log(\sum_{j\geq_{\pi}k}Q_{i,j}),\]
where the expectation is over $\nu$, the uniform distribution over all perfect matchings in $G$, and the random choice of $\pi$.
\end{prop}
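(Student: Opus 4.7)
The plan is to start from the explicit product formula for $p_{\pi,Q}(M)$ read off from Algorithm~\ref{alg: SIS-matching}, which gives
\[
\log p_{\pi,Q}(M) \;=\; \sum_{s=1}^n \log Q_{\pi(s),M(\pi(s))} \;-\; \sum_{s=1}^n \log\!\sum_{j\in N_{\pi(s)}}Q_{\pi(s),j},
\]
and handle the two sums separately. Reindexing the first sum by $i=\pi(s)$ turns it into $\sum_{i=1}^n \log Q_{i,M(i)}$, which depends only on $M$. Since $\nu$ is uniform on perfect matchings and $P_{ik}=\mathbb{P}_\nu((i,k)\in M)$, one has
\[
\mathbb{E}_{M\sim\nu}\sum_{i=1}^n\log Q_{i,M(i)}=\sum_{e\in[n]\times[n]}P_e\log Q_e,
\]
matching the first term in the claim exactly. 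The hypothesis that $Q_e\neq 0$ whenever $P_e\neq 0$ keeps every logarithm finite on the support of $\nu$.

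For the second (subtracted) sum I need a matching upper bound. Conditioning the algorithm's trajectory to output $M$ under permutation $\pi$, the partial matching at step $s$ equals $\{(x_{\pi(t)},y_{M(\pi(t))}):t<s\}$, so the $Y$-vertices still unmatched at step $s$ are exactly $\{y_j:\pi^{-1}(M^{-1}(j))\geq s\}$. Extendability can only further restrict this set and $Q$ is nonnegative, so writing $i=\pi(s)$ yields the deterministic pointwise bound
\[
\log\!\sum_{j\in N_i}Q_{i,j}\;\leq\;\log\!\sum_{j:\,\pi^{-1}(M^{-1}(j))\geq \pi^{-1}(i)}Q_{i,j}.
\]

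The step I expect to require the most care is converting this into the clean form $\sum_{j\geq_\pi k}Q_{i,j}$ in the statement. Fix $i$ and set $k=M(i)$; the condition above reads $\sigma(j)\geq\sigma(k)$ with $\sigma=\pi^{-1}\circ M^{-1}$. For every fixed $M$, the map $\pi\mapsto\sigma$ is a bijection of $S_n$, so $\sigma$ is uniform when $\pi$ is, and using the definition $j\geq_\pi k\Leftrightarrow \pi(j)\geq\pi(k)$ gives the distributional identity
\[
\mathbb{E}_\pi \log\!\sum_{j:\,\sigma(j)\geq\sigma(k)}Q_{i,j}\;=\;\mathbb{E}_\pi \log\!\sum_{j\geq_\pi k}Q_{i,j}.
\]
Summing over $i=1,\dots,n$ and averaging over $M$ with weights $P_{ik}=\mathbb{P}_\nu(M(i)=k)$ yields the second term in the statement. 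Subtracting this upper bound from the exact identity for the numerator then produces the claimed inequality. The only delicate bookkeeping is the last distributional identity, which rests on the invariance of the uniform measure on $S_n$ under inversion and under composition with the fixed permutation $M^{-1}$.
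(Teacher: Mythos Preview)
Your proposal is correct and follows essentially the same route as the paper's proof: start from the product formula for $p_{\pi,Q}(M)$, bound the denominator at each step by enlarging the set of extendable neighbors to the full set of still-unmatched $Y$-vertices, and then use the invariance of the uniform law on $S_n$ (under composition with $M^{-1}$ and inversion) to rewrite the normalizing sums in the form $\sum_{j\geq_\pi k}Q_{ij}$ before averaging with the marginals $P_{ik}$. Your bookkeeping with $\sigma=\pi^{-1}\circ M^{-1}$ makes the distributional identity slightly more explicit than the paper's one-line ``$M\circ\pi$ is also uniform,'' but the substance is the same.
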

\begin{proof}
The proof is similar to derivation of Equation (4) in \cite{anari2018tight}, except their result is for the case that $Q=P$.
Let $A$ be the adjacency matrix of $G$. Then for a perfect matching $M$ note that 
\[\nu(M)=\frac{1}{\per(A)}.\]
As before, for a matching $M$,  let $M(v)$ be the  vertex matched to $v$ in $M$. The probability that at step $i$ we  match $x_{\pi(i)}$ to $y_{M(\pi(i))}$ is at least $\frac{Q_{\pi(i),M(\pi(i))}}{\sum_{j=i}^nQ_{\pi(i)M(\pi(j))}}$, because the set of extendable neighbors of $x_{\pi(i)}$ is a subset of available vertices at step $i$, i.e., the set $\{y_{M(\pi(i))},y_{M(\pi(i+1))},\ldots, y_{M(\pi(n))}\}$.
Therefore,
\[p_{\pi,Q}(M)\geq \prod_{i=1 }^n\frac{Q_{\pi(i),M(\pi(i))}}{\sum_{j=i}^nQ_{\pi(i)M(\pi(j))}}.\]
As a result,
\begin{align}\label{eq: proposition37eq}
    \mathbb E_{M\sim \nu,\pi\sim S_n}\Big(\log(p_{\pi,Q}(M))\Big)&\geq \mathbb E_{M\sim \nu,\pi\sim S_n}\Big(\log\prod_{i=1 }^n\frac{Q_{\pi(i),M(\pi(i))}}{\sum_{j=i}^nQ_{\pi(i)M(\pi(j))}}\Big)\\
    &=\sum_e P_e\log(Q_e)-\sum_{i=1}^n  \mathbb E_{M\sim \nu,\pi\sim S_n}\Big(\log\big(\sum_{j=i}^n Q_{\pi(i)M(\pi(j))}\big)\Big),
\end{align}
where in the second equality, we used the fact that each edge $e$ appears with probability $P_e$ in a matching drawn from $\nu$.

Next, we use the fact that when $\pi$ is a uniform random permutation, then $M\circ\pi$ is also a uniform random permutation to prove
\begin{equation}\label{eq: new_eq}
    \sum_{i=1}^n\mathbb E_{M\sim \nu,\pi\sim S_n}\Big(\log\big(\sum_{j\geq i} Q_{\pi(i)M(\pi(j))}\big)\Big)=\sum_{i=1}^n \mathbb E_{M\sim \nu,\pi\sim S_n}\Big(\log\big(\sum_{j\geq_\pi M(i)} Q_{ij}\big)\Big).
\end{equation}
 To see this, let $i'=\pi(i)$,
    \[\sum_{j\geq i} Q_{\pi(i)M(\pi(j))}=\sum_{j':\pi^{-1}(j'))\geq i} Q_{i'M(j')}=\sum_{j':\pi^{-1}(j'))\geq \pi^{-1}(i')} Q_{i'M(j')}.\]
    Since  each permutation $\pi$ is reversible, and its reverse appears with the same probability (both with probability $\frac{1}{n!}$),
        \[\sum_{i=1}^n\mathbb E_{M\sim \nu,\pi\sim S_n}\Big(\log\big(\sum_{j\geq i} Q_{\pi(i)M(\pi(j))}\big)\Big)=\sum_{i=1}^n \mathbb E_{M\sim \nu,\pi\sim S_n}\Big(\log\big(\sum_{j\geq_\pi i} Q_{iM(j)}\big)\Big).\]
    Now, since $M$ is a perfect matching, then its reverse $M^{-1}$ is also a perfect matching. Thus,
\begin{align*}
   \sum_{i=1}^n \mathbb E_{M\sim \nu,\pi\sim S_n}\Big(\log\big(\sum_{j\geq_\pi i} Q_{iM(j)}\big)\Big)
    &=\sum_{i=1}^n \mathbb E_{M\sim \nu,\pi\sim S_n}\Big(\log\big(\sum_{M^{-1}(k)\geq_\pi i} Q_{ik}\big)\Big)\\
    &=\sum_{i=1}^n \mathbb E_{M\sim \nu,\pi\sim S_n}\Big(\log\big(\sum_{j\geq_\pi M(i)} Q_{ij}\big)\Big),
\end{align*}
which proves \eqref{eq: new_eq}.

Also, since each edge $e$ is drawn with probability $P_e$ in $M\sim \nu$,
\begin{equation}\label{eq: new_eq2}\sum_{i=1}^n\mathbb E_{M\sim \nu,\pi\sim S_n}\Big(\log\big(\sum_{j\geq_\pi M(i)} Q_{ij}\big)\Big)=\sum_{i=1}^n\sum_{k=1}^n P_{ik}\mathbb E_{\pi\sim S_n}\Big(\log\big(\sum_{j\geq_\pi k} Q_{ij}\big)\Big),
\end{equation}
which gives the statement of the result.
\end{proof}

Define $r_i = n \cdot \max_{1\leq j\leq n}Q^A_{ij}$, for $i \in [n]$. 
Also, define $\mathcal Q$ to be the subset of row-stochastic matrices such that for each $Q\in \mathcal Q$, (i) the entries of row $i$ in $Q$ are at most $\frac{r_i}{n}$, and (ii) each non-zero entry of $Q$ corresponds to a non-zero entry in $Q^A$, i.e. if $Q_{ij}>0$ then $Q_{ij}^A>0$. 
The following result will find a matrix $Q^*\in\mathcal Q$ with  (almost) equal entries in each row, such that roughly speaking, running Algorithm \ref{alg: SIS-matching} on $Q^*$ would result in a larger KL-divergence than running it on $Q^A$. We will see that this is beneficial, because the simple structure of  $Q^*$ will allow us to prove a certain `Bregman-like' upper bound on the KL-divergence corresponding to $Q^*$.

\begin{lemma}\label{lm: KL convex} 
Let $G$ be a $\lambda$-dense graph of size $n$, and let $\mathcal Q$, and $P$ be defined as above. Then  for large enough $n$, there exists
a matrix $Q^*\in\mathcal Q$ such that,
\begin{enumerate}
    \item For each row $i$, all the non-zero entries (except at most one) are equal to $\frac{r_i}{n}$. 
    Equivalently, let $\ell_{1},\ldots \ell_k$ be the indices of non-zero entries in the row $i$ of $Q^*$
    ,  then $Q^*_{i\ell_2}=\cdots =Q^*_{i\ell_k}=\frac{r_i}{n}$.
    \item Further,
\[-\sum_{e\in[n]\times [n]}P_e\log(Q^*_{e})+ \sum_{t,k}P_{tk}\mathbb{E}_{\pi}\log\big(\sum_{j\geq_{\pi}k}Q^*_{tj}\big)\geq \mathbb{E}_{M\sim \nu}\Big(-\log(p_{\pi,Q^A}(M))\Big),\]
where $\nu$ is the uniform distribution on the set of perfect matchings in $G$. 
\end{enumerate}

\end{lemma}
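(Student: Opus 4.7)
The plan is to establish, for some matrix $Q^* \in \mathcal{Q}$ with the structure in item~1, the stronger claim that $F(Q^*) \geq F(Q^A)$, where
\[F(Q) := -\sum_e P_e \log Q_e + \sum_{t,k} P_{tk}\,\mathbb{E}_\pi \log\Bigl(\sum_{j\geq_\pi k} Q_{tj}\Bigr)\]
is the upper bound from Proposition~\ref{prop: kl reformulation}. Combined with that proposition applied to $Q^A$, this gives $F(Q^*)\geq F(Q^A)\geq \mathbb{E}_{M\sim\nu}(-\log p_{\pi,Q^A}(M))$, which is exactly item~2. The key structural observation is that $F$ decomposes across rows, $F(Q)=\sum_t F_t(q_t)$, where $F_t$ depends only on row $t$, so it suffices to maximize each $F_t$ independently over the row polytope
\[\Delta_t=\Bigl\{q\in\mathbb{R}^n_{\geq 0}:\sum_j q_j=1,\; q_j\leq r_t/n,\; q_j=0 \text{ for } j\notin S_t\Bigr\},\]
where $S_t$ denotes the support of row $t$ of $Q^A$, and then assemble the row-wise maximizers into $Q^*$.

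The heart of the argument is to show that $F_t$ is convex on $\Delta_t$ for $n$ sufficiently large (as the paper's overview before the lemma advertises). I would compute the Hessian directly as $\nabla^2 F_t(q)=H_1+H_2$, where $H_1=\mathrm{diag}(P_{tj}/q_j^2)$ is positive semidefinite and
\[H_2=-\sum_k P_{tk}\,\mathbb{E}_\pi\bigl[e_k^\pi (e_k^\pi)^\top/(T_k^\pi)^2\bigr]\]
is negative semidefinite, with $e_k^\pi$ the indicator vector of $\{j:j\geq_\pi k\}$ and $T_k^\pi=\sum_{j\geq_\pi k}q_j$. On $\Delta_t$, Lemma~\ref{lm: Q-entries} gives $q_j\leq r_t/n$ and Lemma~\ref{lm: P-bound} gives $P_{tj}\geq c_\lambda/n$, so each diagonal entry of $H_1$ is of order at least $\Theta(n)$. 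For the quadratic form of $-H_2$, I would apply Cauchy--Schwarz in the form $(v^\top e_k^\pi)^2\leq T_k^\pi\sum_{j\geq_\pi k} v_j^2/q_j$, reducing the bound to $v^\top(-H_2)v\leq \sum_j v_j^2/q_j\cdot\mathbb{E}_\pi\sum_{k\leq_\pi j} P_{tk}/T_k^\pi$, and then show that this quantity is dominated by $v^\top H_1 v$ for $n$ large.

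Once row-wise convexity is established, the maximum of $F_t$ on the polytope $\Delta_t$ is attained at an extreme point, by the standard fact that a convex function on a polytope attains its maximum at a vertex. A direct inspection of $\Delta_t$ identifies its vertices as the stochastic vectors supported on $S_t$ whose coordinates all lie in $\{0,r_t/n\}$, with at most one coordinate taking an intermediate value to make the row sum exactly $1$. This is precisely the structure required in item~1. Taking $q_t^*$ to be such a vertex maximizer of $F_t$ for each $t$ and letting $Q^*$ have rows $q_t^*$ produces the desired matrix.

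The main obstacle is the Hessian bound. A crude trace estimate of $-H_2$ is insufficient because, when $\pi$ places $k$ at the last position, $T_k^\pi=q_k$ can be as small as $\Theta(1/n)$, producing entries of $-H_2$ of order $n^2$. One must exploit that such placements have probability $O(1/n)$ over random $\pi$ and that, once averaged, the contribution remains of order $n$ (matching $H_1$) but with a strictly smaller constant. Making this precise, likely through a case analysis on the rank $\pi(k)$ combined with the identity $\sum_k P_{tk}=1$, is where I expect the technical work to concentrate.
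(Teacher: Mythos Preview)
Your high-level strategy matches the paper's: use Proposition~\ref{prop: kl reformulation} to replace $\mathbb{E}[-\log p_{\pi,Q^A}(M)]$ by the row-separable upper bound $F(Q)$, argue convexity of $F$ row by row over the box-constrained simplex, and conclude that the maximum over $\mathcal{Q}$ is attained at a point with the stated structure. The row decomposition and the description of the vertices of $\Delta_t$ are both correct.

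The technical execution of the convexity step differs from the paper, and your version has a gap. The paper does \emph{not} show that the full Hessian of $F_t$ is positive semidefinite. Instead it fixes a row $i$ and two indices $j_1,j_2$, sets $Q_{ij_1}=x$, $Q_{ij_2}=s-x$ with all other entries frozen, and proves the one-variable function $g(x)$ is convex on $[0,s]$. Convexity along every such coordinate-pair direction suffices for an iterative argument: as long as some row has two entries strictly inside $(0,r_i/n)$, push one of them to $\{0,r_i/n\}$ while not decreasing $F$; iterate until at most one interior entry remains per row. This is weaker than full Hessian positivity and correspondingly easier to verify.

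For the second-derivative bound itself, the paper's key tool is a concentration inequality rather than Cauchy--Schwarz. Since $q_{\pi(1)},\ldots,q_{\pi(n)}$ under a uniform permutation are negatively associated, a Chernoff bound yields $\mathbb{P}_\pi\big(\sum_{j=1}^t q_{\pi(j)}\leq t/(2n)\big)\leq e^{-t/2}$, which directly controls the event you flag as the obstacle (atypically small $T_k^\pi$). This makes the negative part of $g''(x)$ of order $O\big(\tfrac{\log^2 n}{(nx)^2}\big)+O(\log n)$, dominated for large $n$ by $P_{j_1}/x^2+P_{j_2}/(s-x)^2=\Theta(n)$ via Lemmas~\ref{lm: Q-entries} and~\ref{lm: P-bound}. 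Your Cauchy--Schwarz step $(v^\top e_k^\pi)^2\leq T_k^\pi\sum_{j\geq_\pi k} v_j^2/q_j$ drops one power of $T_k^\pi$ in the denominator, leaving you to compare $A_j:=\sum_k P_{tk}\,\mathbb{E}_\pi[\mathbf{1}(k\leq_\pi j)/T_k^\pi]$ against $P_{tj}/q_j$; both sides are $\Theta(1)$, and with the constants $c_\lambda=\lambda/(2(\lambda+1))$ and $r_t\leq 1/(2\lambda)$ the inequality does not close for small $\lambda$ without injecting a concentration bound of exactly the type the paper uses. In short, you have correctly identified the obstacle but not the tool that resolves it.
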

\begin{proof}
 Fix a matrix $Q\in\mathcal Q$ and a row $i$.
Also, fix all the entries of $Q$ except $Q_{ij_1}$ and $Q_{ij_2}$. Let $Q_{ij_1}+Q_{ij_2}=s$. 
By Proposition \ref{prop: kl reformulation}, 
\begin{equation*}
\mathbb{E}_{\sigma\sim \nu,\pi}(-\log(p_{\pi,Q}(\sigma)))\leq -\sum_{e\in[n]\times [n]} P_e\log(Q_{e})+ \sum_{t,k}P_{tk}\mathbb{E}_{\pi}\log(\sum_{j\geq_{\pi}k}Q_{tj}).
\end{equation*}
Define the function $g(x)=-\sum_{e\in[n]\times [n]}P_e\log(Q_{e})+ \sum_{t,k}P_{tk}\mathbb{E}_{\pi}\log(\sum_{j\geq_{\pi}k}Q_{tj})$, where $Q_{ij_1}=x$, $Q_{ij_2}=s-x$ and all other entries of $Q$ are fixed. We will prove that $g(x)$ is a convex function when $ x \in [0,  s]$. But first, let us prove the lemma assuming that is the case. 

Since $g$ is convex, it attains its maximum either at $x=0$ or $x=s$. 
Therefore for any two entries of row $i$ of $Q$ that are not in $\{0,\frac{r_i}{n}\}$, we can increase the value of 

\begin{equation}
\label{eq:upperbound}-\sum_{e\in[n]\times [n]}P_e\log(Q_e)+ \sum_{t,k}P_{tk}\mathbb{E}_{\pi}\log\big(\sum_{j\geq_{\pi}k}Q_{tj}\big),
\end{equation}
by making one of the entries either $0$ or  $\frac{r_i}{n}$ while keeping the other one in the interval $[0,\frac{r_i}{n}]$ and keeping their sum fixed. This operation does not change the row sums and therefore keeps  $Q$ within $\mathcal Q$.  
By repeating this operation we can get a matrix $Q^* \in \mathcal Q$ such that each entry of row $i$ (except at most one) is in $\{0,\frac{r_i}{n}\}$, while only increasing the value of \eqref{eq:upperbound}, hence proving the theorem.

To prove the convexity, we show the second derivative of $g$ is positive. For simplicity, fix $i$ and let $q_j=Q_{ij}$ and $P_j=P_{ij}$. Note that $\sum_{j\geq_\pi k}q_j$ is a constant  when $j_1,j_2\leq_\pi k$ or $j_1, j_2 \geq_\pi k$.
 Therefore,
\begin{align*}
g''(x)= &\frac{P_{j_1}}{x^2}+\frac{P_{j_2}}{(s-x)^2}\\&-\sum_{k\neq j_1,j_2}P_{k}\Big(\sum_{\pi: j_1\geq_\pi k>_\pi j_2}\frac{\mathbb{P}(\pi)}{(\sum_{l\geq_\pi k}q_l )^2}+\sum_{\pi: j_2\geq_\pi k>_\pi j_1}\frac{\mathbb{P}(\pi)}{(\sum_{l\geq_\pi k}q_l )^2}\Big)
\\
&-P_{j_1}\sum_{\pi: j_1>_\pi j_2}\frac{\mathbb{P}(\pi)}{(\sum_{l\geq_\pi j_1}q_l )^2}
-P_{j_2}\sum_{\pi: j_2>_\pi j_1}\frac{\mathbb{P}(\pi)}{(\sum_{l\geq_\pi j_2}q_l )^2}.
\end{align*}
Note that here $\mathbb P(\pi)=\frac{1}{n!}$ is the probability that a uniform permutation is equal to $\pi$.

Since, $x^2<(x+\sum_{\pi:l>_\pi j_1}q_l )^2$ and $\sum_{\pi: j_1>_\pi j_2}\mathbb{P}(\pi)=1/2$, it is enough to prove
\begin{align*}\frac{P_{j_1}}{2x^2}&+\frac{P_{j_2}}{2(s-x)^2}\geq\\& \sum_{k\neq j_1,j_2}P_{k}\Big(\sum_{\pi: j_1\geq_\pi k>_\pi j_2}\frac{\mathbb{P}(\pi)}{(\sum_{l\geq_\pi k}q_l )^2}
+\sum_{\pi: j_2\geq_\pi k>_\pi j_1}\frac{\mathbb{P}(\pi)}{(\sum_{l\geq_\pi k}q_l )^2}\Big).
\end{align*}
Now, since the entries of a permutation over $n-2$ entries of row $i$ are negatively associated
\cite{joag1983negative}, by the Chernoff inequality (e.g., see \cite{dubhashi1996balls}) 
\begin{equation}\label{eq: chernoff}
\mathbb{P}_{\pi}\Big(\sum_{j=1}^tq_{\pi(j)}\leq \frac{\mathbb{E}_\pi[\sum_{j=1}^tq_{\pi(j)}]}{2} \Big)\leq \exp\Big(-\frac{\mathbb{E}_\pi[\sum_{j=1}^tq_{\pi(j)}]^2n^2}{2t}\Big).
\end{equation}
To compute the expectation note that for each $j$, $\mathbb E_{\pi}[q_{\pi(j)}]=\frac{1}{n}$ because $Q$ is row-stochastic. Hence, by the linearity of expectation 
 $ \mathbb{E}_\pi(\sum_{j=1}^tq_{\pi(j)})=\frac{t}{n}$. Further, observe that $\mathbb{P}(\pi(t)=k, j_1<_\pi k\leq_\pi j_2 )\leq \frac{t}{n(n-1)}$. Then
\begin{align*}
    &\sum_{\pi: j_1 \geq_\pi k>_\pi j_2}\frac{\mathbb{P}(\pi)}{(\sum_{l\geq_\pi k}q_l )^2}
    \\
    &\qquad= \sum_{t=1}^n\mathbb{P}(\pi(t)=k, j_1<_\pi k\leq_\pi j_2 )\mathbb E_\pi\Big[\frac{1}{(\sum_{l\geq_\pi k}q_l )^2}\mid \pi(t)=k, j_1<_\pi k\leq_\pi j_2\Big]\\
    &\qquad\leq \sum_{t=1}^{\lceil\log n\rceil} \frac{t}{n(n-1)x^2}\\
    &\qquad\qquad+\sum_{t=\lceil\log n\rceil+1}^{n} \frac{t}{n(n-1)}\Big(\frac{1}{(\mathbb{E}_\pi[\sum_{j=1}^tq_{\pi(j)}]/2)^2}+\frac{\mathbb{P}_{\pi}\Big(\sum_{j=1}^tq_{\pi(j)}\leq \frac{\mathbb{E}_\pi[\sum_{j=1}^tq_{\pi(j)}]}{2} \Big)}{x^2}\Big)\\
    &\qquad\leq \sum_{t=1}^{\lceil\log n\rceil} \frac{t}{n(n-1)x^2}+\sum_{t=\lceil\log n\rceil+1}^{n} \frac{t}{n(n-1)}\Big(\frac{1}{(x+\frac{t-1}{2n})^2}+\frac{e^{-t/2}}{x^2}\Big)\\
    &\qquad\leq \frac{\log^2 n}{n(n-1)x^2}+ 4\log(4xn+n)+\frac{2\log n}{n(n-1)x^2},
\end{align*}
where in the first inequality we used \eqref{eq: chernoff} and the fact that $\sum_{l\geq_\pi k}q_l\geq x$ when $j_1\geq_\pi k$.
 A similar inequality holds  for $s-x$. Note that for large enough $n$, we have $2\log n\leq \log^2 n$. Therefore, 
 \begin{align*}
   & g''(x)\geq\\
   &\frac{P_{j_1}}{2x^2}+\frac{P_{j_2}}{2(s-x)^2}-\frac{2\log^2 n}{(nx)^2}-4\log{(4xn+n)}-\frac{2\log n}{(n(s-x))^2}-4\log{(4(s-x)n+n)}.
 \end{align*} 
Note that for $n\geq (4r_i+1)^4$ we have
\[\frac{\log^2 n}{(nx)^2}\geq\frac{\log^2{n}}{(r_i)^2}\geq 4\log(4r_i+n)\geq4\log(4xn+n).\] 
As a result,
 \begin{align*}
    g''(x)\geq
    &\frac{P_{j_1}}{2x^2}+\frac{P_{j_2}}{2(s-x)^2}-\frac{3\log^2 n}{(nx)^2}-\frac{3\log^2 n}{(n(s-x))^2}.
 \end{align*} 
By Lemma \ref{lm: P-bound} and the fact that $x,s-x\in[0,\frac{r_i}{n}]$ there exists $C>0$ such that both $P_{j_1}$ and $P_{j_2}$ are  at least $C/n$.
Therefore, $g''(x)\geq 0$ for $n\geq\max\Big(3C^{-1},(4r_i+1)^4,e^2\Big)$, which proves the convexity of $g$. 
\end{proof}
Now, we are ready to bound the KL-divergence.

\begin{proof}[Proof of Lemma \ref{lm: bnd kl-dense}]
Let $\pi$ be the permutation that is chosen by Algorithm \ref{alg: SIS-matching}. Since $\nu$ assigns probability $\frac{1}{\per(A)}$ to each perfect matching, 
\[D_{KL}(\nu||\mu)=\mathbb{E}_{M\sim \mu}\Big(-\log(p_{\pi,Q^A}(M))\Big)-\log(\per(A)).\]
Let $Q^A_i$ denote the $i^{th}$ row of $Q^A$ and let $\frac{r_i}{n}$ be the maximum entry of $Q^A_i$.  
First, we use Lemma~\ref{lm: KL convex} to upper bound the first term: 
\[\mathbb{E}_{M\sim \mu}\Big(-\log(p_{\pi,Q^A}(M))\Big) \leq -\sum_{e\in[n]\times [n]}P_e\log(Q^*_{e})+ \sum_{i,k}P_{ik}\mathbb{E}_{\pi}\log(\sum_{j\geq_{\pi}k}Q^*_{ij}),\]
where in $Q^*$ row sums are equal to $1$  and  each entry of $Q^*_i$ (except at most one) is in $\{0,\frac{r_i}{n}\}$.
Then, we prove

\begin{equation}\label{eq: bregman}
-\sum_{e\in[n]\times [n]}P_{e}\log(Q^*_{e})+ \sum_{i,k}P_{ik}\mathbb{E}_{\pi}\log(\sum_{j\geq_{\pi}k}Q^*_{ij})\leq\log\prod_{i=1}^n\Big( \big(\lfloor\frac{n}{r_i}\rfloor+1\big)!^{1/(\lfloor\frac{n}{r_i}\rfloor+1)}r_i^{\frac{1}{\lambda n}} \Big)
\end{equation}
and
\begin{equation}\label{eq: perm}
\log(\per(A)) \geq \log\left(e^{-n}\sqrt{2\pi n}\prod_i \frac{n}{r_i}\right).
\end{equation}

Assuming the above inequalities are true, we will finish the proof of the lemma. By combining inequalities  (\ref{eq: bregman}) and (\ref{eq: perm}), and using Stirling's approximation in \eqref{eq: bregman},
\begin{align*}
    D_{KL}(\nu||\mu) &\leq \log\Big(\frac{\prod_{i=1}^n\big(\sqrt{2\pi(\lfloor\frac{n}{r_i}\rfloor+1)}^{\frac{r_i}{n}}\big(\lfloor\frac{n}{r_i}\rfloor+1\big)e^{r_i/(12n^2)-1} r_i^{\frac{1}{\lambda n}}\big)}{(\prod_{i=1}^n \frac{n}{r_i} )e^{-n}\sqrt{2\pi n}}\Big)\\
    &\leq \sum_{i=1}^n\Big(\frac{r_i}{2n}\log( \frac{1}{r_i}+\frac{1}{n})+\log(1+\frac{r_i}{n})+\frac{1}{\lambda n}\log(r_i)\Big)\\
    &\qquad\qquad+(\sum_{i=1}^n \frac{r_i}{2n})\big(\log(2\pi n)+\frac{1}{6n}\big)-\frac{1}{2}\log(2\pi n)\\
    &\leq  (\frac{1}{2n}\sum_{i=1}^n r_i-\frac{1}{2})\log(2\pi n)+\frac{1}{12n^2}\sum_{i=1}^nr_i+\frac{1}{2\lambda}+\frac{1}{\lambda}\log(\frac{1}{2\lambda}),
\end{align*}
where in the last inequality we used the facts that $r_i\leq(2\lambda)^{-1}$ by Lemma \ref{lm: Q-entries}, and that for large enough $n$, we have $\log(\frac{1}{r_i}+\frac{1}{n})\leq \log(2\lambda+\frac{1}{n})\leq 0$, and $\log(1+\frac{r_i}{n})\leq \frac{r_i}{n}\leq (2\lambda n)^{-1}$.

In order to bound $\sum_i r_i$, assume  without loss of generality  that $\beta_1=\max_j(\beta_j)$. If $A_{i1}$ is non-zero in row $i$, then $Q^A_{i1}$ is equal to $\beta_1 \alpha_i$. Furthermore, it is the highest element in row $i$ and therefore   $\frac{r_i}{n} = \beta_1 \alpha_i $. Also, since $Q^A$ is doubly stochastic,  $\beta_1 \sum_{i\sim 1}\alpha_i \leq 1$ where $i\sim 1$ denotes that $A_{i1}\neq 0$. Therefore, by writing the first column sum, 
$$\frac{1}{n}\sum_{i\sim 1} r_i=\beta_1(\sum_{i\sim 1}\alpha_i)\leq 1.$$
 For the  rows $i\not\sim1$,   use Lemma \ref{lm: Q-entries} which implies $r_i\leq \frac{1}{2\lambda}$. Therefore,
$$\frac{1}{n}\sum_{i=1}^nr_i=\frac{1}{n}\sum_{i\sim1}r_i+\frac{1}{n}\sum_{i\not\sim1}r_i\leq 1+n(1/2-\lambda)\frac{1}{2\lambda n}=\frac{1}{2}+\frac{1}{4\lambda},$$
which finishes the proof, since
\begin{align*}
 D_{KL}(\nu||\mu)&\leq (\frac{1}{2n}\sum_{i=1}^n r_i-\frac{1}{2})\log(2\pi n)+\frac{1}{12n}+\frac{1}{2\lambda}+\frac{1}{\lambda}\log(\frac{1}{2\lambda})\\&\leq (\frac{1}{8\lambda}-\frac{1}{4})\log n+\frac{1}{\lambda}+\frac{1}{\lambda}\log(\frac{1}{2\lambda}),
\end{align*}
for $n\geq \lambda/2$.

It remains to prove (\ref{eq: perm}) and (\ref{eq: bregman}). First, we prove (\ref{eq: perm}). Recall that $Q^A=D_\alpha A D_\beta$, where $D_\alpha$ and $D_\beta$ are diagonal matrices with entries
$\alpha_1,\ldots,\alpha_n$, and $\beta_1,\ldots,\beta_n$, respectively.
Without loss of generality, assume there exists a matching between $i$ of $X$ to the vertex $i$ of $Y$ (otherwise we can reorder vertices). Then $\alpha_i\beta_i A_{ii}=Q^A_{ii}\leq \frac{r_i}{n}$ by the definition of $r_i$. By this observation and the Van der Waerden inequality for $Q^A$ (see e.g., \cite{knuth_permanent}),
$$\per(A)=\per(D_\alpha^{-1})\cdot \per(Q^A)\cdot \per(D_\beta^{-1})\geq (\prod_i \frac{1}{\alpha_i\beta_i}) \frac{n!}{n^n}\geq e^{-n}\sqrt{2\pi n}\prod_i \frac{n}{r_i}.$$

Next, we prove (\ref{eq: bregman}). For an $n\times 1$ vector $\bm{c}$, let $\bm{c}\cdot Q^*$ be a matrix with each row of $Q^*$ multiplied by a constant factor. 
Note that the sampling distribution of Algorithm \ref{alg: SIS-matching} does not change, i.e., for any matching $M$, $p_{Q^*}(M)=p_{\bm{c}\cdot Q^*}(M)$. So, by choosing the right constants we can assume each row, $Q^*_i$, has $\lfloor\frac{n}{r_i}\rfloor$ entries equal to $1$ and all other entries (except possibly one entry) is equal to $0$. Note that this normalization does not change the  left hand side of the inequality  (\ref{eq: bregman}). Indeed, following the proof of (\ref{eq: new_eq}-~\ref{eq: new_eq2})
in Proposition \ref{prop: kl reformulation},
\begin{align*}
    \sum_{(i,j)\in[n]\times [n]}P_{(i,j)}\log(\bm{c}_i\cdot Q^*_{(i,j)})&- \sum_{i,k}P_{ik}\mathbb{E}_{\pi}\log(\sum_{j\geq_{\pi}k} \bm{c}_i\cdot Q^*_{ij}) \\&=
    \mathbb E_{M\sim \nu,\pi\sim S_n}\Big(\log\prod_{i=1 }^n\frac{\bm{c}_{\pi(i)}Q_{\pi(i),M(\pi(i))}}{\sum_{j=i}^n\bm{c}_{\pi(i)}Q_{\pi(i)M(\pi(j))}}\Big)\\&= \mathbb E_{M\sim \nu,\pi\sim S_n}\Big(\log\prod_{i=1 }^n\frac{Q_{\pi(i),M(\pi(i))}}{\sum_{j=i}^nQ_{\pi(i)M(\pi(j))}}\Big) \\&=
     \sum_{e\in[n]\times [n]}P_e\log(Q^*_{e})- \sum_{i,k}P_{ik}\mathbb{E}_{\pi}\log(\sum_{j\geq_{\pi}k}Q^*_{ij}).
\end{align*}

So it is sufficient to show that
\begin{equation}\label{eq: bregman1}
\begin{aligned}
-\sum_{(i,j)\in[n]\times [n]}P_{(i,j)}\log(\bm{c}_i\cdot Q^*_{(i,j)})+ \sum_{i,k}P_{ik}\mathbb{E}_{\pi}\log(\sum_{j\geq_{\pi}k}\bm{c}_i\cdot Q^*_{ij})\\
\quad\leq\log\prod_{i=1}^n\Big( \big(\lfloor\frac{n}{r_i}\rfloor+1\big)!^{1/(\lfloor\frac{n}{r_i}\rfloor+1)}r_i^{\frac{1}{\lambda n}} \Big).
\end{aligned}
\end{equation}

We now prove the final inequality.
By the construction of $Q^*$ in Lemma \ref{lm: KL convex}, all entries (except at most one) in each row of $\bm{c}\cdot Q^*$ are equal to $1$. The smallest non-zero entry smaller than one in  row $i$  of $\bm{c}\cdot Q^*$ is equal to $\frac{n}{r_i}-\lfloor\frac{n}{r_i}\rfloor$. So, if such an entry outside of $\{0,1\}$ exists, i.e., if $n$ is not divisible by $r_i$, then the smallest non-zero entry is at least $\frac{1}{r_i}$. Further, the matching marginal of each edge cannot be more than $\frac{1}{\lambda n}$, by Lemma \ref{lm: P-bound}.  
Therefore,
\[-\sum_{(i,j)\in[n]\times [n]}P_{(i,j)}\log(\bm{c}_i\cdot Q^*_{(i,j)})\leq \sum_{i=1}^n\frac{1}{\lambda n}\log(r_i). \]

Now, we need to bound the second term in the left hand side of \eqref{eq: bregman1}. Let $\bar Q^*$ be a binary ($0-1$) matrix that each of its entries are equal to $1$ if and only if the same entry is non-zero in $Q^*$. Note that $\bar Q^*$ is equal to $\bm{c}\cdot Q^*$ except at most one entry per row. Then \[\sum_{i,k}P_{ik}\mathbb{E}_{\pi}\log(\sum_{j\geq_{\pi}k} \bm{c}_i\cdot Q^*_{ij})\leq \sum_{i,k}P_{ik}\mathbb{E}_{\pi}\log(\sum_{j\geq_{\pi}k}\bar Q^*_{ij}).\]

The rest is an upper bound similar to Bregman-Minc's inequality \cite{MincPermanent,bregman1973some}.
Note that 
$\sum_{j\geq_{\pi}k}\bar Q^*_{ij}$ is equal to the number of non-zero entries of row $i$ that appear after $k$.  There are $\lfloor\frac{n}{r_i}\rfloor+1$   non-zero entries and the probability that $k$ appears before $\ell$ number of them  in $\pi$ is equal to $\frac{1}{\lfloor\frac{n}{r_i}\rfloor+1}$. Therefore,
\begin{align*}
    \sum_{i,k}P_{ik}\mathbb{E}_{\pi}\log(\sum_{j\geq_{\pi}k}\bar Q^*_{ij})&=\sum_{i,k}P_{ik}\frac{1}{\lfloor\frac{n}{r_i}\rfloor+1}\sum_{\ell=1}^{\lfloor\frac{n}{r_i}\rfloor+1}\log(\ell)\\
    &=\sum_{i,k}P_{ik}\frac{1}{\lfloor\frac{n}{r_i}\rfloor+1}\log((\lfloor\frac{n}{r_i}\rfloor+1)!)\\
     &=\sum_{i}\frac{1}{\lfloor\frac{n}{r_i}\rfloor+1}\log((\lfloor\frac{n}{r_i}\rfloor+1)!),
\end{align*}
where the last equality is because the sum of matching marginals over each row is equal to $1$. As a result, we have proved \eqref{eq: bregman1}.
\end{proof}

In Section \ref{sec: experiment}, three different applications are given for SIS for sampling perfect matchings. Although some of our applications include situations where the underlying graph is not dense, we will see that SIS with doubly stochastic scaling still converges rapidly in our simulations. It remains open to give theoretical bounds, supporting the simulation results, on the convergence of SIS in graphs that are not dense.

\section{Applications}\label{sec: experiment}

Estimating the number of perfect matchings has applications in various settings. In this section, we present simulation results of three applications of SIS with doubly stochastic scaling. 

We start by counting the number of Latin squares and rectangles. 
As  will be discussed in Section \ref{sec: latin square}, a $k\times n$ Latin rectangle corresponds to $k$ disjoint perfect matchings in a complete bipartite graph of size $n$. Using SIS, we sample and estimate the number of Latin squares. Then, we use an SIS estimator as a benchmark to test three conjectures on the asymptotic number of Latin squares.

We continue by applying SIS to card guessing experiments in Section \ref{sec: card guessing SIS}. A deck of $n$ cards is shuffled and one has to guess the cards one by one. We will see that guessing the most likely card at each step (the greedy strategy) reduces to evaluating permanents. Therefore, we can apply SIS to find the number of correct card guesses using greedy for large decks of cards. Note that in the literature, the exact expectation of correct guesses using greedy was only known for small deck sizes (see e.g., \cite{diaconis2020card,diaconis2020guessing}).

Finally, Section \ref{sec: SIS for SBM} demonstrates the importance of using the doubly stochastic scaling of the adjacency matrix as the input of SIS. We compare SIS with and without doubly stochastic scaling to count the number of perfect matchings in bipartite graphs generated by stochastic block models. As we will see, using the doubly stochastic scaling of the adjacency matrix can make the standard deviation $4$ times lower in some cases.
\subsection{Counting Latin Rectangles}\label{sec: latin square}
The first application of sequential importance sampling is for counting the number of Latin rectangles.
An $n\times n$ Latin square is an $n\times n$ matrix with entries in $\{1,\ldots,n\}$, such that each row and each column contains distinct integers. Let $L_{n}$ be the number of $n\times n$ Latin squares. A $k\times n$ Latin rectangle is a $k$ by $n$ array with all rows containing $\{1,2,...,n\}$ and all columns distinct

The exact values of $L_{k,n}$ is only known for small  $k$ and $n$. Indeed, $L_{1,n}=n!$. Also, the number of ways to fill out the second row of a $2\times n$ Latin square is equal to the number of derangements of $\{1,\ldots,n\}$, leading to $L_{2,n}\sim \frac{(n!)^2}{e}$.  A series of classical works give the asymptotics of $L_{k,n}$ when $k=o(n^{1/2})$ \cite{ErdosKaplansky,yamamoto1952asymptotic,Yamamoto,STEIN197838}.
Godsil and McKay \cite{GODSIL199019} generalized these results to the case $k=o(n^{6/7})$, with  the following asymptotics $$L_{k,n}\sim\frac{(n!)^k([n]_k)^n}{e^{k/2}(1-\frac{k}{n})^{n/2}n^{kn}}.$$

There are Markov chains on the space of Latin squares with uniform stationary distributions \cite{jacobson1996generating, PITTENGER1997251}. Alas, at this writing, there are no known bounds on the mixing time. Another method is to use divide and conquer to generate an exact uniform sample \cite{desalvo2017exact}. 
Here, we use sequential importance sampling to generate a Latin rectangle row by row. First, we describe the algorithm to sample a Latin square. Then we compare our estimator with known exact values and an earlier versions of importance sampling given by Kuznetsov \cite{kuznetsov2009estimating}. Then we test three conjectured asymptotics on the number of Latin rectangles and squares. Finally, we test a conjecture by Cameron \cite{cameronschool} on the number of odd permutation rows in a typical Latin square.

To describe the SIS algorithm, let $G(X,Y)$ be a bipartite graph, where  $X$ represents entries in  a row and $Y=\{1,\ldots,n\}$ represents the possible values for each entry. Then we sample a Latin rectangle row by row. Start with $G=K_{n,n}$ and repeat the following for $k$ steps: Sample a perfect matching with sequential importance sampling and then remove its edges from $G$.
This procedure is repeated until a Latin rectangle is obtained. In the experiment below, this is repeated $N=10^7$ times resulting in Latin rectangles, each with a weight equal to the product of importance sampling weights for all rows.
Define the estimator $L_{k,n}^{SIS}$ as the average of the $N$ weights of $k \times n$ Latin rectangles. In the same way, we define the estimator $L_n^{SIS}$ for the number of Latin squares.

First, we compare $L_n^{SIS}$ with some exact values of $L_n$ in Table \ref{tbl: latin square}. Note that the exact values of $L_n$ are only known up to $n=11$. See \cite{Stones} for a comprehensive recent review on computing the value of  $L_{k,n}$. To simplify the reported numbers, we divided $L_n$ and $L_n^{SIS}$ by $n!(n-1)!$, since there are $n!(n-1)!$ ways to generate the first row and the first column of a Latin square. As shown in the table, the relative error of SIS estimator is less than $0.1\%$ for all $5\leq n\leq 11$. Also, an early version of unscaled importance sampling with rejections  has been given by Kuznetsov \cite{kuznetsov2009estimating}. In Table \ref{tbl: latin square kuznetsov}, we compare our estimator with Kuznetsov's estimator.
\begin{table}[!htbp]
 \begin{tabular}{||c|| c| c| c |c|c|c||} 
 \hline
 \rule{0pt}{15pt}
 $n$ & Runs & $\frac{L_n^{SIS}}{n!(n-1)!}$ & Confidence Interval & $\frac{L_n}{n!(n-1)!}$ & \% error & References  \\ [0.5ex] 
 \hline\hline
$5$ & $10^7$ & $56.021$ & $(56.000, 56.041)$ & $56$ & $0.0375$ &\\ 
 \hline
 6 & $10^7$  & $9406.3$ & $(9400.9, 9411.7)$ & $9408$ & $0.0181$ &\\
 \hline
 7 & $10^7$ & $1.6945\times 10^7 $& $(1.6933\times 10^7 , 1.6958\times 10^7)$ & $1.6942\times 10^7$ & $0.0177$ &\\
 \hline
 8 & $10^7$ & $5.3529 \times 10^{11}$ & $(5.3475\times 10^{11}, 5.3583\times 10^{11})$ & $5.3528\times 10^{11}$ & $0.0019$ & \cite{mullen1993some,wells1967number} \\
 \hline
 9 & $10^7$ & $3.7781 \times 10^{17}$ & $(3.7729\times 10^{17}, 3.7834\times 10^{17})$ & $3.7759\times 10^{17}$ & $0.0583$ & \cite{mullen1993some} \\ 
  \hline
 10& $10^7$  & $7.5876 \times 10^{24}$ & $(7.5730\times 10^{24}, 7.6024\times 10^{24})$ & $7.5807\times 10^{24}$ & $0.0910$ & \cite{mckay1995latin} \\ 
  \hline
 11& $10^7$  & $5.3687 \times 10^{33}$ & $(5.3539\times 10^{33}, 5.3836\times 10^{33})$ & $5.3639\times 10^{33}$ & $0.0895$ & \cite{mckay2005number} \\ [1ex] 
 \hline
\end{tabular}
\caption{Comparison of our SIS estimator ($L_n^{SIS}$) with the exact values of $L_n$ for $5\leq n\leq 11$.} 
\label{tbl: latin square}
\end{table}

\begin{table}[!htbp]
 \begin{tabular}{||c|| c| c|c |c|c||} 
 \hline
 \rule{0pt}{15pt}
 $n$ & Runs & $L_n^{SIS}$  & Kuznetsov's estimator& The exact value of $L_n$\\ [0.5ex] 
 \hline\hline
$5$ & $10^7$ & \begin{tabular}{@{}c@{}}$1.613 \times 10^5$\\$(1.613\times10^5, 1.614 \times 10^5)$ \end{tabular} &  \begin{tabular}{@{}c@{}} $1.609 \times 10^5$\\$(1.593\times10^5, 1.625 \times 10^5)$\end{tabular}& 161280\\ 
 \hline
 6 & $10^7$  & \begin{tabular}{@{}c@{}}$8.127 \times 10^8$\\$(8.122\times10^8, 8.132 \times 10^8)$\end{tabular} & \begin{tabular}{@{}c@{}}$8.135 \times 10^8$\\$(8.054\times10^8, 8.217 \times 10^8)$\end{tabular} & $8.129 \times 10^8$\\
 \hline
 7 & $10^7$ & \begin{tabular}{@{}c@{}}$6.149 \times 10^{13}$\\$(6.145\times10^{13}, 6.153 \times 10^{13})$\end{tabular} & \begin{tabular}{@{}c@{}}$6.149 \times 10^{13}$\\$(6.087\times10^{13}, 6.210 \times 10^{13})$\end{tabular} & $6.148 \times 10^{13}$\\
 \hline
 8 & $10^7$ & \begin{tabular}{@{}c@{}}$1.088 \times 10^{20}$\\$(1.087\times10^{20}, 1.089 \times 10^{20})$\end{tabular} &  \begin{tabular}{@{}c@{}}$1.095 \times 10^{20}$\\$(1.084\times10^{20}, 1.106 \times 10^{20})$\end{tabular}& $1.088 \times 10^{20}$\\
 \hline
 9 & $10^7$ & \begin{tabular}{@{}c@{}}$5.528 \times 10^{27}$\\$(5.520\times10^{27}, 5.536 \times 10^{27})$\end{tabular} & \begin{tabular}{@{}c@{}}$5.531 \times 10^{27}$\\$(5.475\times10^{27}, 5.586 \times 10^{27})$\end{tabular} & $5.525 \times 10^{27}$\\ 
  \hline
 10& $10^7$  & \begin{tabular}{@{}c@{}}$9.992 \times 10^{36}$\\$(9.972 \times10^{36},10.011 \times 10^{36})$\end{tabular} &  \begin{tabular}{@{}c@{}}$9.991 \times 10^{36}$\\$(9.891 \times10^{36},10.091 \times 10^{36})$\end{tabular}& $9.982 \times 10^{36}$ \\ 
  \hline
 11& $10^7$  & \begin{tabular}{@{}c@{}}$7.777 \times 10^{47}$\\$(7.755\times10^{47}, 7.798\times 10^{47})$\end{tabular} & \begin{tabular}{@{}c@{}}$7.777 \times 10^{47}$\\$(7.700\times10^{47}, 7.855\times 10^{47})$\end{tabular}& $7.770 \times10^{47}$ \\ [1ex] 
 \hline
  12& $10^7$  & \begin{tabular}{@{}c@{}}$3.102 \times 10^{60}$\\$(3.091\times10^{60}, 3.114\times 10^{60})$\end{tabular} & \begin{tabular}{@{}c@{}}$3.083 \times 10^{60}$\\$(3.053\times10^{60}, 3.114\times 10^{60})$\end{tabular} &---\\ [1ex] 
 \hline
  13& $10^7$  & \begin{tabular}{@{}c@{}}$7.523 \times 10^{74}$\\$(7.480\times10^{74}, 7.566\times 10^{74})$\end{tabular} & \begin{tabular}{@{}c@{}}$7.427 \times 10^{74}$\\$(7.353\times10^{74}, 7.502\times 10^{74})$\end{tabular} &---\\ [1ex] 
 \hline
  14& $10^7$  & \begin{tabular}{@{}c@{}}$1.274 \times 10^{91}$\\$(1.263\times10^{91}, 1.285\times 10^{91})$\end{tabular} & \begin{tabular}{@{}c@{}}$1.261 \times 10^{91}$\\$(1.249\times10^{91}, 1.274\times 10^{91})$\end{tabular} & ---\\ [1ex] 
 \hline
  15& $10^7$  & \begin{tabular}{@{}c@{}}$1.724 \times 10^{109}$\\$(1.702\times10^{109}, 1.747\times 10^{109})$\end{tabular} & \begin{tabular}{@{}c@{}}$1.728 \times 10^{109}$\\$(1.710\times10^{109}, 1.745\times 10^{109})$\end{tabular} &---\\ [1ex] 
 \hline
  16& $10^7$  & \begin{tabular}{@{}c@{}}$2.168 \times 10^{129}$\\$(2.113\times10^{129}, 2.224\times 10^{129})$\end{tabular}  & \begin{tabular}{@{}c@{}}$2.211 \times 10^{129}$\\$(2.167\times10^{129}, 2.255\times 10^{129})$\end{tabular} &---\\ [1ex] 
 \hline
  17& $10^8$  & \begin{tabular}{@{}c@{}}$2.8045 \times 10^{151}$\\$(2.7734\times10^{151},2.8355\times 10^{151})$\end{tabular} & \begin{tabular}{@{}c@{}}$2.766 \times 10^{151}$\\$(2.711\times10^{151},2.821\times 10^{151})$\end{tabular} &---\\ [1ex] 
 \hline
  18& $10^8$  & \begin{tabular}{@{}c@{}}$4.2512 \times 10^{175}$\\$(4.1886\times10^{175}, 4.3138\times 10^{175})$\end{tabular} & \begin{tabular}{@{}c@{}}$4.163 \times 10^{175}$\\$(4.038\times10^{175}, 4.288\times 10^{175})$\end{tabular} &---\\ [1ex] 
 \hline
  19& $10^8$  & \begin{tabular}{@{}c@{}}$8.3851 \times 10^{201}$\\$(8.1913\times10^{201}, 8.5789\times 10^{201})$\end{tabular} & \begin{tabular}{@{}c@{}}$8.594 \times 10^{201}$\\$(8.250\times10^{201}, 8.937\times 10^{201})$\end{tabular} &---\\ [1ex] 
 \hline
 20& $10^8$  & \begin{tabular}{@{}c@{}}$2.4433 \times 10^{230}$\\$(2.3326\times10^{230}, 2.5541\times 10^{230})$\end{tabular} & \begin{tabular}{@{}c@{}}$2.263 \times 10^{230}$\\$(2.150\times10^{230}, 2.376\times 10^{230})$\end{tabular}& ---\\ [1ex] 
 \hline
\end{tabular}
\caption{Comparison of our SIS estimator with the estimator by Kuznetsov \cite{kuznetsov2009estimating}.}
\label{tbl: latin square kuznetsov}
\end{table}
.  

Next we compare our method with three conjectures on the asymptotic value of $L_{k,n}$ and $L_n$. The first is a conjecture by Timashov \cite{Timashov2002OnPO}, who used a formula by O’Neil \cite{oNeil1970asymptotics} for the permanent of a random matrix with given row and column sums to  guess the number of Latin squares. Let $L_{k,n}^{Tim}$, $L_n^{Tim}$ be the number of Latin rectangles and Latin squares, respectively, conjectured by Timashov,
\[L_{k,n}^{Tim}=\frac{(2\pi n/e)^{k/2}(1 - k/n)^{n^2 - nk + 1/2} ([n]_k)^{2n}}{n^{kn}}(1+o(1)),\]
\[L_n^{Tim}=\frac{(2\pi)^{3n/2+1}}{2}e^{-2n^2-n/2-1}n^{n^2+3n/2-1}(1+o(1)).\]

As a second conjecture, Leckey, Liebenau and Wormald \cite{LLW} conjectured the following asymptotics
\[L_{k,n}^{LLW}=f\big(\frac{k}{n}\big)\frac{(n!)^k ([n]_k)^{2n}}{e^{\frac{k}{2}}[n^2]_{kn}},\]
where $f(x)$ is a continuous and increasing function on $[0,1]$, with $f(0)=0$ and $f(1)=\frac{\sqrt{2\pi^3}}{e^{7/4}}$, in particular,
\[L_n^{LLW} =\frac{\sqrt{2\pi^3}}{e^{\frac 74}} \frac{(n!)^{3n}}{e^{\frac{n}{2}}(n^2)!}\approx L_n^{Tim}.\]
Actually Timashov \cite{Timashov2002OnPO} also allowed an additional constant term $C(k,n)$, and the asymptotic  of $L_n^{LLW}$ conjectures that $C(k,n)$ is of the form $f\big({k}/{n}\big)$. As both constants are unspecified in general and the forms we have given above seem quite accurate, we will stick with them without the constant.

As the third conjecture, Eberhard, Manners and Mrazovic \cite{EMM} used Maximum Entropy
methods and Gibbs distributions to give the following conjecture on the number of Latin squares
\[L_n^{EMM}=\frac{(n!)^{3n}}{(n^2)!}e^{-\frac{n}{2}+\frac{5}{6}+O(\frac{1}{n})}.\]

Note the difference between the $L_n^{EMM}$ with Timashov's conjecture for Latin squares, $\frac{L_n^{Tim}}{L_n^{EMM}}\sim \frac{\sqrt{2\pi^3}}{e^{25/12}}\approx .975$. 
In order to visualize the estimated number of Latin squares for large $n$ and compare then with these conjectures, we divide the assymptotics and our estimator by the constant $c_n = \frac{(n!)^{3n}}{(n^2)!}e^{-\frac{n}{2}}$. 
Note that  
$\frac{L_n^{EMM}}{c_n}=e^\frac{1}{2} \approx 1.649$. 
 As shown in  Fig. \ref{fig:latin}, our estimators are in favor of Timashov's conjecture for Latin square.

\begin{figure}[!htbp]
\begin{center}
  \includegraphics[scale=0.47]{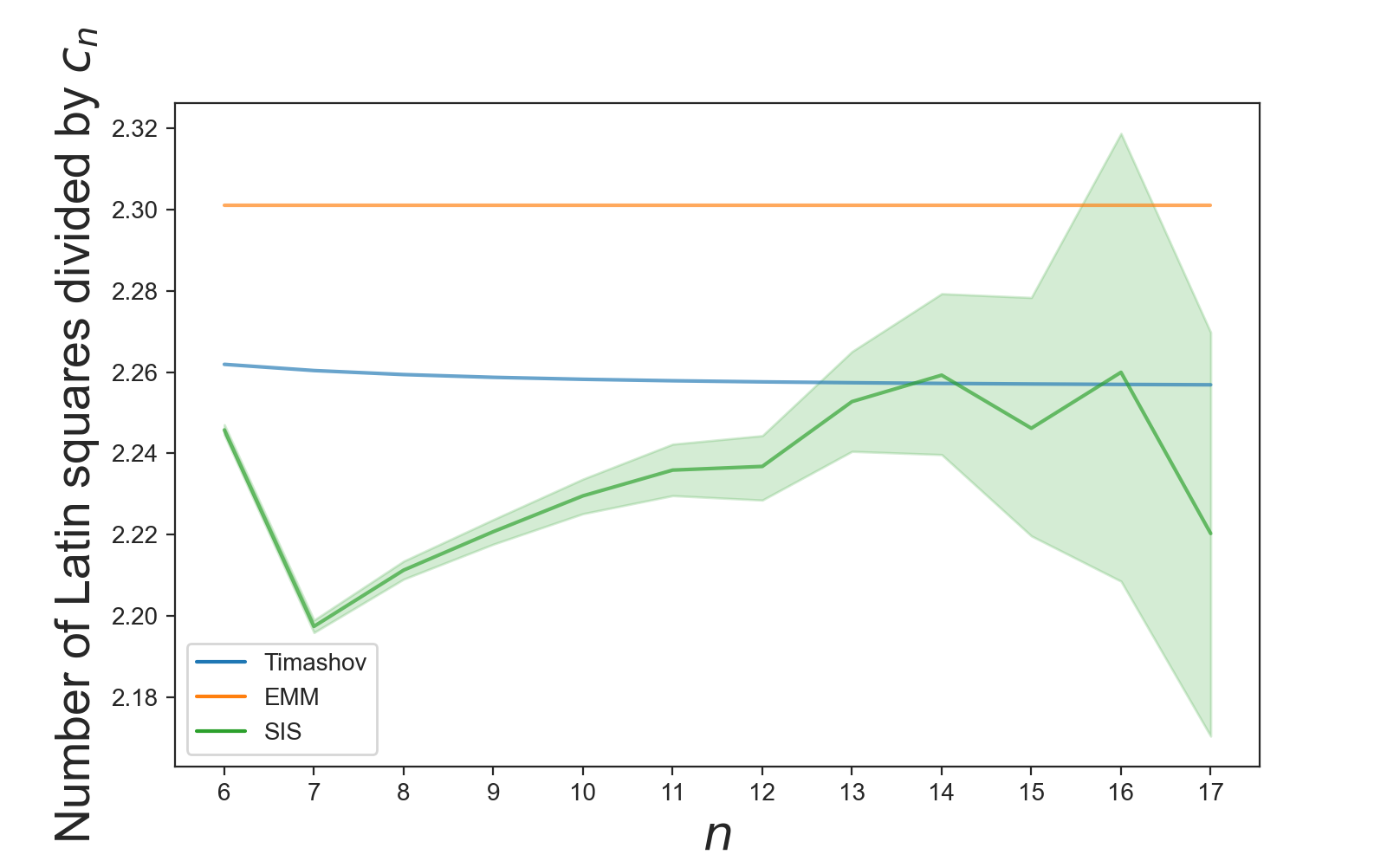}
  \caption{Comparison of SIS estimator $\frac{L_n^{SIS}}{c_n}$ with $\frac{L_n^{EMM}}{c_n}$ and $\frac{L_n^{Tim}}{c_n}$. The $95\%$ confidence intervals are highlighted.}\label{fig:latin}
\end{center}
\end{figure}

Next we fix $k=5$ and compare the conjectures on the number of $5\times n$ Latin rectangles. In order to simplify the plot, we divide each estimator by the factor $c_{k, n} = \frac{(n!)^k ([n]_k)^{2n}}{[n^2]_{kn}}$. We know $\frac{L_{5,n}^{LLW}}{c_{5,n}}=e^\frac{-5}{2} \approx 0.082$, and also, $\frac{L_{k,n}^{Tim}}{L_{k,n}^{LLW}} = e^{-k/12n}$. So,  the difference between conjectures decays with $n$.
In Fig. \ref{fig:latin_rect}, we compare our estimator $L_n^{SIS}$ with both conjectures, and
unlike the case of Latin squares, our results show a better accuracy for Leckey, Liebenau and Wormald's conjecture for Latin rectangles.

\begin{figure}[!htbp]
\begin{center}
  \includegraphics[scale=0.4]{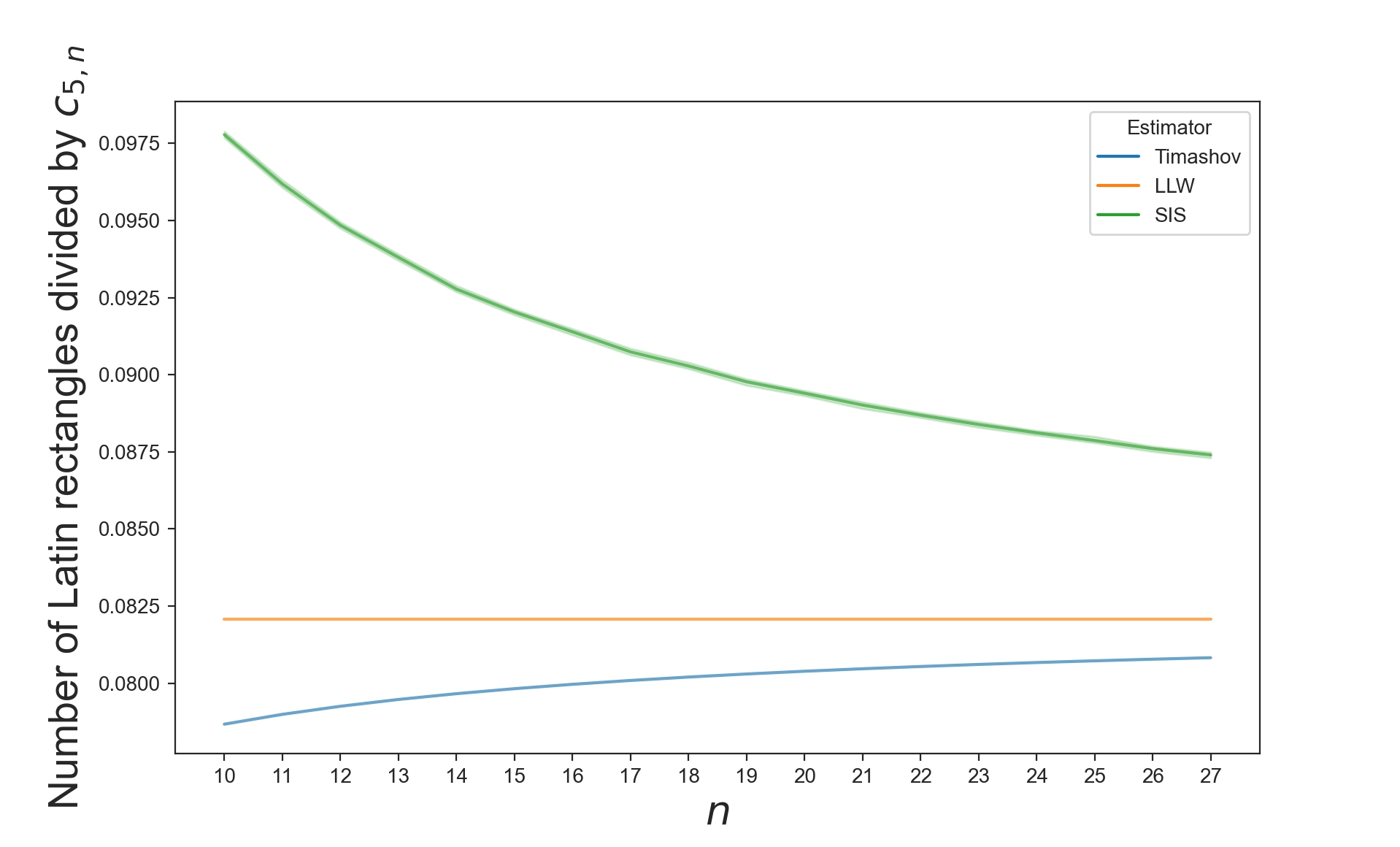}
  \caption{Comparison of $\frac{L_{5,n}^{SIS}}{c_{5,n}}$, $\frac{L_{5,n}^{Tim}}{c_{5,n}}$, and $\frac{L_{5,n}^{LLW}}{c_{5,n}}$. The $95\%$ confidence intervals are too narrow to present, with $(0.0977, 0.0978)$ for $n=10$ and $(0.0873, 0.0874)$ for $n=27$.}\label{fig:latin_rect}
\end{center}
\end{figure}

Finally, we test a conjecture by Cameron on random Latin squares. Call a row of a Latin square odd if its corresponding permutation is odd. The conjecture is as follows: 
\begin{conjecture}[Problem 10 in \cite{cameronschool}]
The number of odd rows of a random Latin square of order $n$ is approximately binomial $Bin(n,\frac{1}{2})$ as $n\to \infty$.
\end{conjecture}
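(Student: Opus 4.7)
The plan is to provide strong empirical evidence for this conjecture by leveraging the SIS sampler from Section~\ref{sec: latin square} to approximate the distribution of odd rows, rather than attempting a full mathematical proof (which would likely require entropy-method machinery of the sort used by Eberhard--Manners--Mrazovi\'c \cite{EMM} and is well beyond the scope of a simulation section). The basic idea is that SIS gives weighted samples $(L_i, w_i)$ with $w_i = 1/p_i$ such that, for any statistic $f$ on Latin squares, $\sum_i w_i f(L_i) / \sum_i w_i$ is a consistent estimator of $\mathbb{E}_\nu[f]$ under the uniform measure $\nu$. Applied to the indicator $f = \mathds{1}(K = k)$, where $K = K(L)$ counts the number of rows of $L$ whose corresponding permutation is odd, this yields an estimator of $\mathbb{P}_\nu(K = k)$.

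Concretely, I would first generate $N$ (e.g.\ $N = 10^7$ or $10^8$, matching the earlier tables) weighted Latin squares of size $n$ via SIS. For each sample, compute $K_i$ by counting inversions or by cycle decomposition in each of the $n$ rows, at cost $O(n^2 \log n)$ per Latin square. Then form the weighted empirical distribution $\hat p_k = \bigl(\sum_{i:K_i=k} w_i\bigr) / \bigl(\sum_i w_i\bigr)$ for $k = 0, 1, \ldots, n$, and compare it to $\mathrm{Bin}(n, 1/2)$ via a side-by-side plot and a weighted chi-squared or total-variation statistic. I would also compute the weighted mean and variance of $K$, which under the conjecture should approach $n/2$ and $n/4$, and report confidence intervals. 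By Corollary~\ref{cor: statistics bnd}, the number of samples needed for a fixed relative error on any bounded-$L^2$ statistic of a uniform matching is controlled by the KL bound, so at least for dense sub-problems the required $N$ grows polynomially in $n$.

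A useful sanity check before scaling $n$ up is to fix small $n$ (say $n \le 8$) where all $L_n$ Latin squares can be enumerated exactly, and verify that the SIS-weighted empirical distribution of $K$ matches the true uniform-over-$L_n$ distribution to within Monte Carlo error; this separates any systematic bias in the sampler from genuine deviations of the target distribution from $\mathrm{Bin}(n, 1/2)$.

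The main obstacle is that the conjecture is asymptotic, so even a perfect simulation only produces evidence for moderate $n$. The secondary obstacle is the effective sample size: the estimator $\hat p_k$ behaves like an average of $N_{\mathrm{eff}} = (\sum_i w_i)^2 / \sum_i w_i^2$ i.i.d.\ samples, and the tails $k \approx 0$ or $k \approx n$ have true mass exponentially small in $n$, so confidently resolving the tail shape (rather than just the bulk/mean/variance) would require $N$ growing roughly like $2^n$. In practice I would therefore focus on checking the mean, variance, and the bulk of the distribution for a range of $n$ up to where the effective sample size remains adequate, and present the agreement (or any deviation) as empirical support for---or against---Cameron's conjecture.
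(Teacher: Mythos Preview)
Your proposal takes essentially the same approach as the paper: this is a conjecture, not a theorem, and the paper also does not attempt a proof but instead tests it empirically by generating SIS-weighted Latin squares and comparing the weighted empirical distribution of the number of odd rows against $\mathrm{Bin}(n,\tfrac12)$. The only substantive difference is the comparison metric---the paper reports the first-moment Wasserstein distance for $n=7,\ldots,15$ with $10^6$ samples (and notes the distances are small but do not visibly decay), whereas you propose total variation or chi-squared plus moment checks; your additional sanity check against exact enumeration for small $n$ and the explicit discussion of effective sample size and tail resolution are refinements the paper does not include.
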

As a special case of this conjecture, H\"aggkvist and Janssen \cite{haggkvist1996all} showed that the number of Latin squares with all even rows is exponentially small. We test the conjecture with sequential importance sampling. Note that each generated Latin square is associated with a weight which is equal to the inverse of the probability of generating that Latin square in importance sampling. Let $\hat o_n$ be the empirical distribution of the number of odd permutations in a Latin square of size $n$ generated by SIS over $10^6$ samples. The first moment Wasserstein distance of $\hat o_n$ with associated SIS weights from the distribution $Bin(n,\frac{1}{2})$
is presented for $n=7$ to $15$  in Table~\ref{tab: odd permutations}. While the distance between two distributions are  small, we do not yet see a decay of the distances with our simulations for $n$ up to $15$.
\begin{table}[!htbp]
    \centering
    \begin{tabular}{|c||c|c|c|c|c|c|c|c|c|}
    \hline
       $n$  & $7$ & $8$ & $9$&$10$& $11$& $12$& $13$& $14$& $15$\\
       \hline 
       \rule{0pt}{3ex}
       $\widetilde{ \mathcal W}$&
       $0.0247$&
$0.0054$&
$0.0030$&
$0.0043$&
$0.0163$&
$0.0140$&
$0.0299$&
$0.0191$&
$0.0279$
       \\
       \hline
    \end{tabular}
    \caption{ The Wasserstein distance ($\widetilde{ \mathcal W}$) of the number of odd rows  in $10^6$ weighted samples of Latin squares using SIS ($\hat o_n$) from $Bin(n,\frac{1}{2})$.}
    \label{tab: odd permutations}
\end{table}

Of course, asymptotics and generating random Latin squares are only one aspect of the problem. The exact calculations indicate that the answers are divisible by surprisingly high powers of $2$. From available data, it is hard to guess at the power or to understand why this should be (see e.g., \cite{Ian} for related open problems). 

\subsection{Card Guessing with Yes/No Feedback}\label{sec: card guessing SIS}

In this section, we apply the repeated estimation of the number of matchings in bipartite graphs to a card guessing experiment. The card guessing experiment starts 
with a well shuffled deck of $N$ cards. A subject guesses the cards one at a time, sequentially, going through the deck. After each guess, the subject is told if their guess is correct or wrong. 

Card guessing experiments with feedback occur in Fisher's tasting tea \cite{fisher:1935}, in evaluation of randomized clinical trials \cite{BlackwellHodges, EfronClinicalTrial}, in ESP experiments \cite{DiaconisESPsearch}, and in optimal strategy in casino card games such as blackjack or baccarat \cite{ethier_levin_2005}. A review is in \cite{DiaconisCompleteFeedback,DiaconisGrahamHolmes}. The recent papers \cite{diaconis2020card,diaconis2020guessing} developed practical  strategies which do perform close to optimal and gave bounds on the expected score under the optimal strategy.

Given a shuffled deck of cards, how should the subject use the feedback to get a high score? 
Extensive numerical work in the literature suggests that greedy (guessing the most likely card given the feedback) is close to optimal. Next we will see that implementing the greedy strategy reduces to evaluating the permanent of a matrix. Then we apply SIS to estimate the expected number of correct guesses using  the greedy strategy for any reasonable deck size. The exact value of greedy was only known for small decks in the literature, and the SIS estimator is close to the known exact values.

To see the relation of card guessing experiments and evaluating permanents let us fix some notation. Consider a deck of $N$ cards, with $d$ distinct values (say, $1, 2, \ldots, d
$) with value $i$ repeated $k_i$ times, so $\sum_1^d k_i = N$. An example is a normal deck of cards $(N = 52)$ with $13$ values each with multiplicity $4$. 

The deck is shuffled and a guessing subject makes sequential guesses. Each time the subject is told if the guess is correct or not. { Consider we want to find the card that the greedy strategy chooses for the next step. Let $a_i$ be the number of cards labeled $i$ that have been in the deck so far but the guessing subject has not guessed them correctly. }
Moreover, let $b_i$ be the number of incorrect guesses where the subject chose card labeled $i$.  As a consequence of the definitions, $\sum_{i=1}^d a_i= \sum_{i=1}^d b_i$.
Let $N(a,b)$ be the number of permutations of a deck of $N' = \sum_1^d a_i$ cards, where cards labeled $1$ are not in first $b_1$ positions and cards labeled $2$ are not in next $b_2$ positions and so on.
Then $N(a,b) = \per(M_{ab})$ with  $M_{ab}$ a zero-one matrix with zero blocks of size $a_1\times b_1,a_2\times b_2,\ldots$ as shown in Figure \ref{fig:zero_block_matrix}.
\begin{figure}[htbp]
\begin{center}
  \includegraphics[scale=0.35]{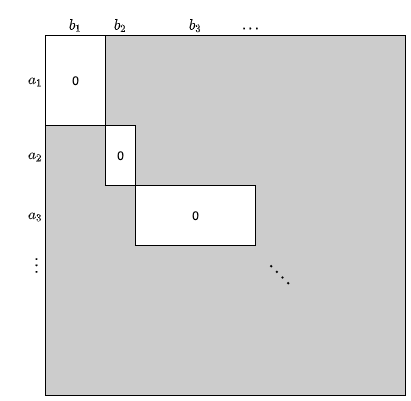}
  \caption{An example of structure of card guessing matrix ($M_{ab}$).}
  \label{fig:zero_block_matrix}
\end{center}
\end{figure}

The number of consistent arrangements with card labeled $i$ in the next position is equal $N(a^{*i},b)$ where $a^{*i}_i = a_i - 1$ and $a^{*i}_j = a_j$ for all $j \neq i$. Thus, the chance that next guess is $i$ equals
$$\frac{N(a^{*i},b)}{N(a,b)}.$$ 
The greedy algorithm guesses $i$ to maximize this ratio. Naively, $d$ permanents must be evaluated at each stage. Some theory allows a simplification. In \cite{ChungPermanent}, the following theorem is proved.
\begin{theorem}\label{thm: greedy alg}
With Yes/No feedback, following an incorrect guess of $i$, the greedy strategy is to guess $i$ for the next guess.
\end{theorem}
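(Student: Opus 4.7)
The plan is to exploit the symmetry of the two positions whose columns are currently unconstrained, combined with a negative-correlation inequality for uniform random compatible matchings. Let $(a, b)$ be the state just before the incorrect guess, let position $0$ denote the location of the current guess and position $1$ the next, and note that both correspond to ``future'' (all-ones) columns of $M_{ab}$. For $\sigma$ a uniform random compatible matching, write $\mu(s, t)$ for the number of such matchings with an $s$-card at position $0$ and a $t$-card at position $1$. Interchangeability of the two columns gives $\mu(s, t) = \mu(t, s)$, and the greedy score is $\nu_k := \sum_\ell \mu(k, \ell) = a_k \, N(a^{*k}, b)$, with the greedy hypothesis reading $\nu_i = \max_k \nu_k$.

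After the wrong guess, position $0$ becomes a forbidden-$i$ column, so the posterior distribution is the uniform distribution on $\{\sigma : \sigma(0) \text{ is not an } i\text{-card}\}$, and the new greedy score for value $k$ is $\nu'_k = \sum_{\ell \neq i} \mu(\ell, k) = \nu_k - \mu(i, k)$ by symmetry. The theorem is therefore equivalent to the inequality
\[ \nu_i - \mu(i, i) \;\geq\; \nu_k - \mu(i, k) \qquad \text{for every } k. \]
I would deduce this from the negative-correlation bound
\[ \frac{\mu(i, i)}{\nu_i} \;\leq\; \frac{\mu(i, k)}{\nu_k}, \]
which states that conditioning on an $i$-card at position $0$ makes an $i$-card at position $1$ less likely than conditioning on a $k$-card does. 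Once this bound is in hand, the two factor inequalities $\nu_i \geq \nu_k \geq 0$ and $1 - \mu(i, i)/\nu_i \geq 1 - \mu(i, k)/\nu_k \geq 0$ both hold, and multiplying them yields $\nu_i(1 - \mu(i, i)/\nu_i) \geq \nu_k(1 - \mu(i, k)/\nu_k)$, which is the target inequality.

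The hard part will be proving the negative-correlation bound itself. Rewritten in permanent form it reads
\[ (a_i - 1) \, N(a - 2 e_i, b) \, N(a - e_k, b) \;\leq\; a_i \, N(a - e_i - e_k, b) \, N(a - e_i, b), \]
a quadratic inequality in permanents reminiscent of the Alexandrov--Fenchel inequality for mixed discriminants. My plan is to prove it by constructing an injection from ordered pairs of matchings witnessing the left side (a matching $\sigma$ with two $i$-cards pre-placed at positions $0, 1$ together with a matching $\tau$ with a $k$-card pre-placed at position $0$) to ordered pairs witnessing the right side; the natural candidate uses an alternating cycle through the symmetric difference $\sigma \triangle \tau$ to perform the required type swap. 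The main technical obstacle is that this swap must respect the zero-block forbidden columns of $M_{ab}$ for values $i$ and $k$, and the naive swap fails precisely when one of these constraints bites on the column selected for the swap; handling that case carefully is where the zero-block structure of $M_{ab}$ (the rows of any given value type being identical) becomes essential.
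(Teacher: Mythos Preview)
The paper does not actually contain a proof of this theorem; it is quoted from \cite{ChungPermanent} and used only to cut down the number of permanent evaluations needed in the card-guessing simulations. A line-by-line comparison with the paper's own argument is therefore not possible.

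On its own merits, your reduction is clean and correct. The exchangeability of the two unconstrained columns gives $\mu(s,t)=\mu(t,s)$, hence $\nu'_k=\nu_k-\mu(i,k)$, and multiplying the two nonnegative factor inequalities $\nu_i\ge\nu_k$ and $1-\mu(i,i)/\nu_i\ge 1-\mu(i,k)/\nu_k$ does yield the target $\nu'_i\ge\nu'_k$. So the whole statement collapses to the single negative-correlation bound $\mu(i,i)\,\nu_k\le\mu(i,k)\,\nu_i$, which in permanent notation is exactly the quadratic inequality you wrote down.

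You are candid that this is the hard part and that you do not yet have it. The injection idea---pairing a matching with two $i$-cards pre-placed against a matching with a $k$-card pre-placed, and swapping along an alternating path in their symmetric difference---is the natural combinatorial attack and uses precisely the structural feature that matters here (all rows of a given value type are identical in $M_{ab}$, so swapping two cards of the same type never violates a constraint). But, as you yourself flag, the naive swap need not respect the $b$-constraints for types $i$ and $k$, and repairing that is the substance of the argument rather than a detail. In short: the plan is sound and the reduction is right, but the crux remains open in your write-up, and the present paper offers no independent benchmark since it outsources the proof entirely.
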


Theorem~\ref{thm: greedy alg} shows that the permanent must only be evaluated following a correct guess. Our algorithm for card guessing use the sequential importance sampling algorithm developed in the sections above to do this via Monte Carlo: fix $B$ and generate random matchings in the appropriate bipartite graph $B$ times, weighting each matching by its probability. Use these weighted samples to estimate the chance that the next card is $i$ and choose the maximizing $i^*$.

Throughout, we simulate greedy using `guess card $i$ until told correct' to start. After a `Yes' answer, sequential importance sampling  runs $B$ times as above to estimate the chance of the $d$ possible values for the next card. The most probable is chosen and one keeps guessing this value until `Yes'. Then again begins to simulate, and repeats the procedure above. The plots below compare various values of $B$.

\textbf{ Example 1 ($k=2$ and varying $d$):}
We begin by discussing a long open case. A deck of size $2d$ of composition $1,1,2,2,3,3,\ldots,d,d$. It was open until recently whether the expected number of correct guesses is unbounded in $d$. In \cite{diaconis2020card} it was shown to be bounded by $6$ for all $d$. 
Figure \ref{fig: k=2 guess card} shows some numbers.

\begin{figure}[htbp]
\begin{center}
  \includegraphics[scale=0.19]{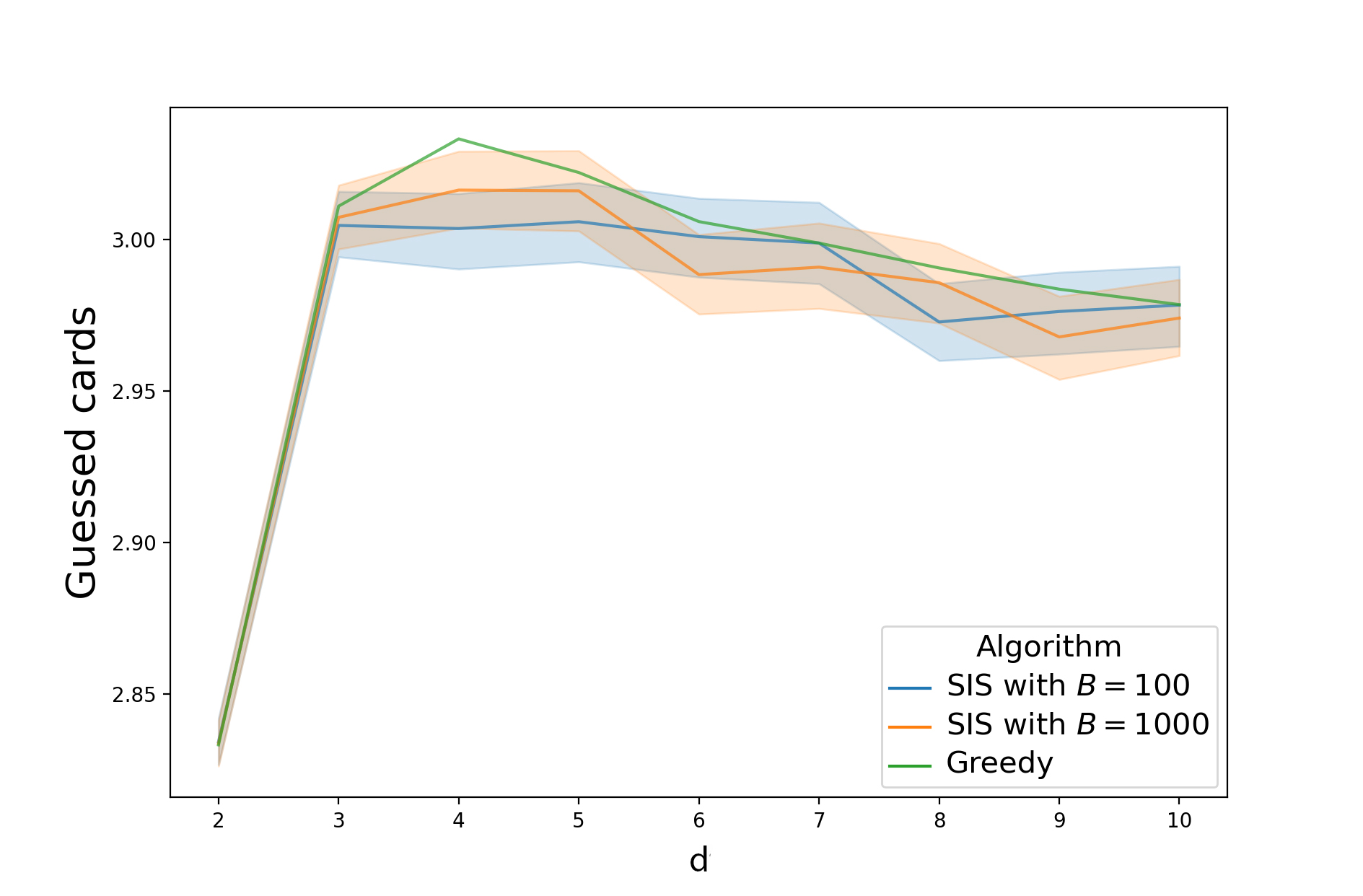}
  \caption{Comparison between different algorithms when $k=2$. The $95\%$ confidence intervals are highlighted.}\label{fig: k=2 guess card}
\end{center}
\end{figure}

\begin{remark}
The figure shows estimates of the expected values with $B=100$ and $B=1000$. The shaded regions are pointwise $95\%$ confidence intervals. Averages are based on $10000$ repetitions. These figures suggest that the lower bound $2.91$ from \cite{diaconis2020guessing} is close to the truth.
\end{remark}
The exact computation of general permanents is a well-known $\#P$-complete problem. However, on reflection, we have found that the permanents of matrices of the form of Figure \ref{fig:zero_block_matrix} can be exactly computed; in linear time in the size of the matrix (see Appendix \ref{sec: exact permanent card guessing}). Having an exact algorithm allows us to check the numerical examples for card guessing. For small values of $d$ the exact expectation of greedy expectations was computed by backward induction for $d=2,3,4,5$ in Table \ref{tab:guesses d=k} and for larger $d$, we use our exact greedy algorithm to estimate the greedy strategy. 

\begin{table}[htbp]
    \centering
    \begin{tabular}{||c||c|c|c||}
    \hline
        $d$ & GREEDY & SIS with $B=100$ & SIS with $B=1000$\\
    \hline\hline
        $2$ & 2.8333 & \begin{tabular}{@{}c@{}}$2.8343$\\$(2.8265, 2.8421)$\end{tabular} & \begin{tabular}{@{}c@{}}$2.8338$\\$(2.8260, 2.8415)$\end{tabular}\\
        \hline
        $3$ & 3.0111 & \begin{tabular}{@{}c@{}}$3.0047$\\$(2.9937, 3.0158)$\end{tabular} & \begin{tabular}{@{}c@{}}$3.0074$\\$(2.9962, 3.0186)$\end{tabular}\\
        \hline
        $4$ & 3.0333 & \begin{tabular}{@{}c@{}}$3.0037$\\$(2.9912, 3.0162)$\end{tabular} & \begin{tabular}{@{}c@{}}$3.0164$\\$(3.0039, 3.0289)$\end{tabular}\\
        \hline
        $5$ & 3.0433 & \begin{tabular}{@{}c@{}}$3.0060$\\$(2.9930, 3.0190)$\end{tabular} & \begin{tabular}{@{}c@{}}$3.0162$\\$(3.0031, 3.0292)$\end{tabular}\\
        \hline
        $10$ & \begin{tabular}{@{}c@{}}$2.9786$\\$(2.9773, 2.9799)$\end{tabular} & \begin{tabular}{@{}c@{}}$2.9784$\\$(2.9649, 2.9918)$\end{tabular} & \begin{tabular}{@{}c@{}}$2.9741$\\$(2.9606, 2.9876)$\end{tabular}\\
        \hline
        $20$ & \begin{tabular}{@{}c@{}}$2.9585$\\$(2.9571, 2.9598)$\end{tabular} & \begin{tabular}{@{}c@{}}$2.9589$\\$(2.9454, 2.9724)$\end{tabular} & \begin{tabular}{@{}c@{}}$2.9557$\\$(2.9423, 2.9691)$\end{tabular}\\
        \hline
    \end{tabular}
    \caption{Greedy versus SIS estimator with $95\%$ confidence interval for $k=2$}
    \label{tab:guesses d=k}
\end{table}

\textbf{Example 2 ($d=k$)}: The classical ESP experiment had $d=k=5$. The theorems in \cite{diaconis2020card} and \cite{diaconis2020guessing} worked for fixed $k$ and large $d$ or fixed $d$ and large $k$. There are virtually no results for guesses when both $k$ and $d$ are large. Using SIS any reasonable deck sizes are now accessible. Fig. \ref{fig:CardGuessingM=n} shows results using sequential importance sampling for $d=k$ for $2 \leq d = k \leq 18$. The algorithm in Appendix \ref{sec: exact permanent card guessing} allows computing the number of correct guesses by the greedy algorithm for larger $d=k$. Note that the number of correct guesses by greedy depends on the order of the cards in the shuffled deck, and as a result we show confidence intervals both for greedy and SIS in Fig. \ref{fig:CardGuessingM=n}. 
\begin{figure}[htbp]
  \includegraphics[scale=0.16]{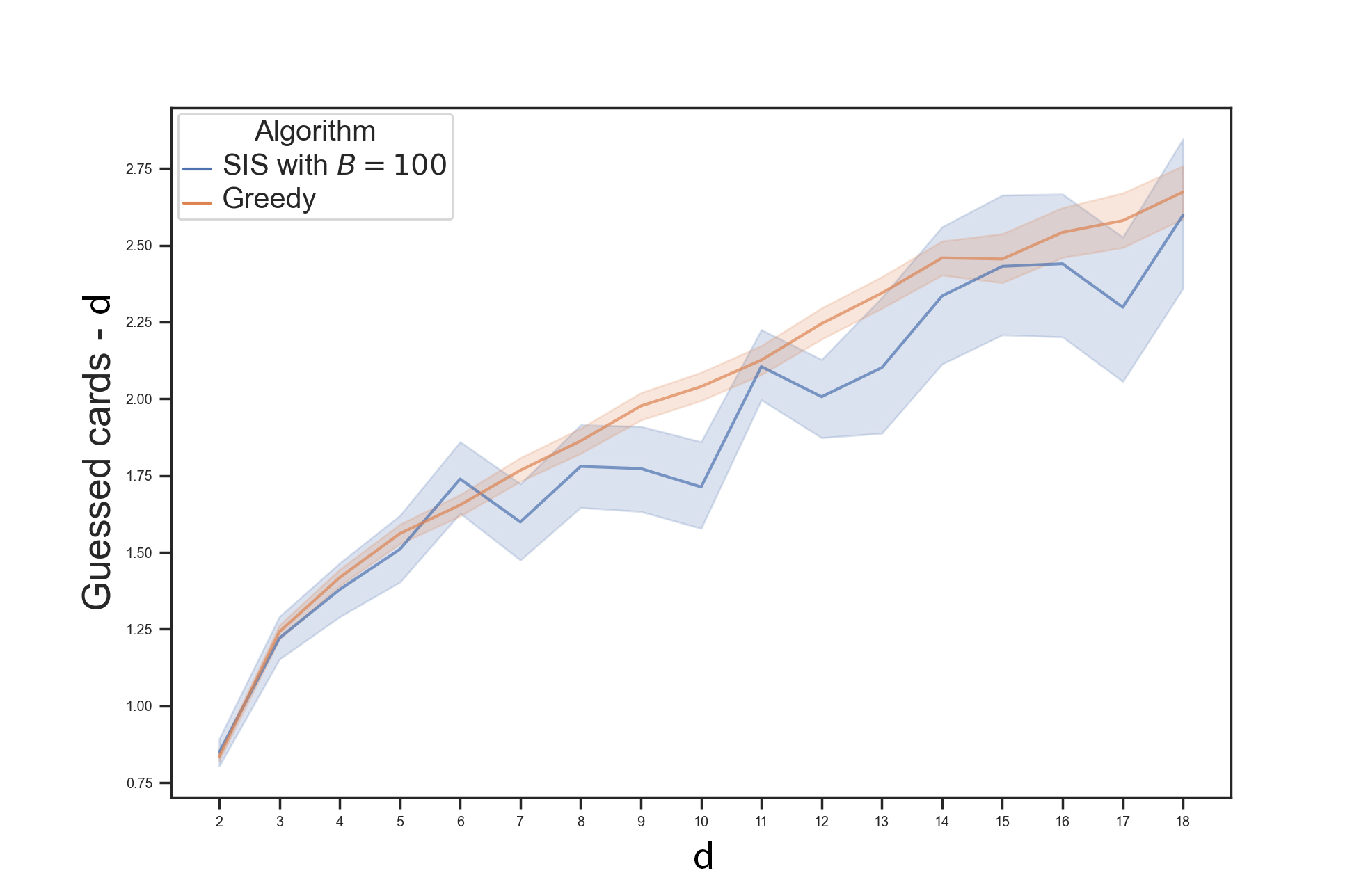}
  \caption{Comparison between different algorithms when $d=k$. The $y$-axis represents 'the number of guessed cards'$-d$. The $95\%$ confidence intervals are highlighted.}\label{fig:CardGuessingM=n}
\end{figure}

\begin{remark}
From this data, the greedy algorithm guesses at least $d$ cards correctly. From previous data, it was not clear if the excess over $k$ even got as large as $2$ (!). We now believe that for $k=d$, the experiment of the greedy strategy is $d + \sqrt{d} + o(\sqrt{d})$ (see Fig. \ref{fig:nMinusSqrtn} ).
\end{remark}

\begin{figure}[htbp]
\begin{center}
  \includegraphics[scale=0.13]{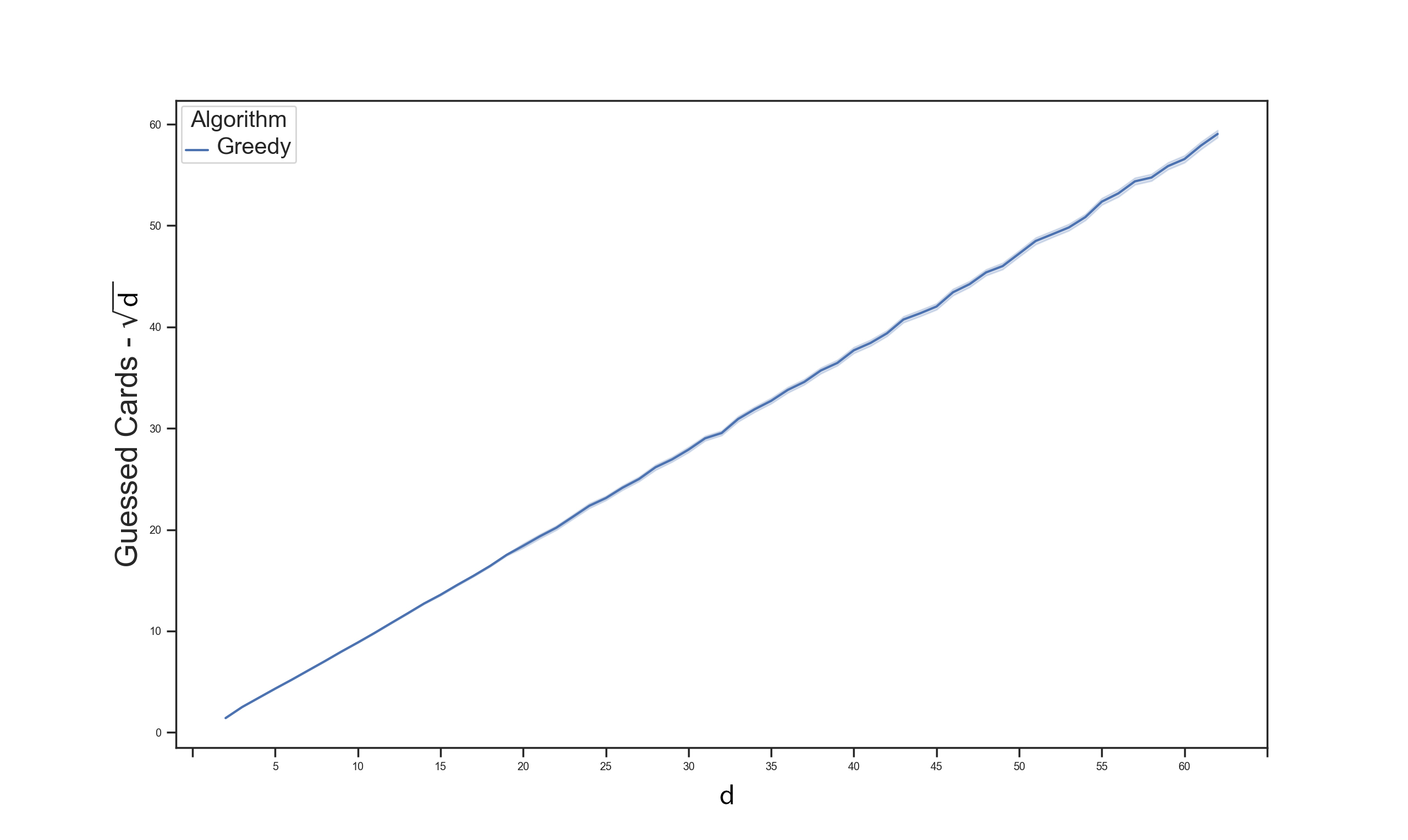}
  \caption{The plot of the `guessed cards' $- \sqrt{d}$ when $d=k$. The $95\%$ confidence intervals are  too narrow to present, with $(2.8213, 2.8484)$ for $d=2$ and $(66.5565, 67.2839)$ for $d=62$.}\label{fig:nMinusSqrtn}
\end{center}
\end{figure}

\subsection{Stochastic Block Models}\label{sec: SIS for SBM}
The purpose of this section is to show the effect of doubly stochastic scaling on the convergence and variance of our estimator. The stochastic block model is a generative model for random graphs with community structures. It is widely used for statistical analysis of community structures and social networks. 

Let $n$ be the number of vertices and $C_1, C_2, \ldots, C_k$ be the clusters. Also, assume that $P$ is a $k \times k$ matrix for edge probability between clusters. The probability that $u \in C_i$ is connected to $v \in C_j$ is equal to $P_{ij}$. We can similarly define stochastic block models for bipartite graphs. If we have $k$ clusters in one part and $r$ clusters in the other part, then the probability matrix will be a $k \times r$ matrix which $P_{ij}$ shows the probability that a vertex from the cluster $i$ in the first part is connected to a vertex from the cluster $j$ of the second part.

We focus on the case that each part of the bipartite graph $G(X,Y)$ has only two clusters. Let $X_1, X_2$ be clusters in first part and $Y_1, Y_2$ be clusters in the other part. Hence, the matrix $P$ will be an $2 \times 2$ matrix. Let us assume that $P_{11} = P_{12} = P_{21} = p$ and $P_{22} = q$. Intuitively, for large $p$ and small $q$, most of the edges of a perfect matching would be selected from edges between $X_1$, and $Y_2$ or from edges between $X_2$, $Y_1$. In other words, If we select many edges between $X_1$ and $Y_1$, then there could be a near-perfect matching between $X_2$, $Y_2$ which has a low probability. Therefore, using the algorithm without doubly stochastic scaling of the adjacency matrix is not efficient. 

In Fig. \ref{fig:convergence}, we sampled one graph with the given values of $p$ and $q$ when $n=20$. Then we used SIS (with/without doubly stochastic scaling) estimator to count the number of perfect matchings in the sampled graph.
As shown in the figures, SIS with doubly stochastic scaling converges faster. Given that both estimators are unbiased, an important property for comparison is the standard deviation of the estimator. In Table \ref{tbl: variance}, we compare the standard deviation of the SIS with doubly stochastic scaling and without doubly stochastic scaling. In this table each row correspond to one sampled graph from the stochastic block model.  In some cases the standard deviation of the SIS without doubly stochastic is 10 times higher than the SIS with doubly stochastic scaling.

\begin{remark}
Given $p,q\in[0,1]$, it is possible to calculate the expected number of perfect matchings over all graphs drawn from doubly stochastic block model. However, the variance on the number of perfect matchings is large. Therefore, if we do not fix one sampled graph it is hard to differentiate the efficiency of one estimator over the other. 
\end{remark}
\begin{figure}[!htbp]
\begin{center}
\subfloat[$p=1, q=0.1$.]{\includegraphics[width = 3in]{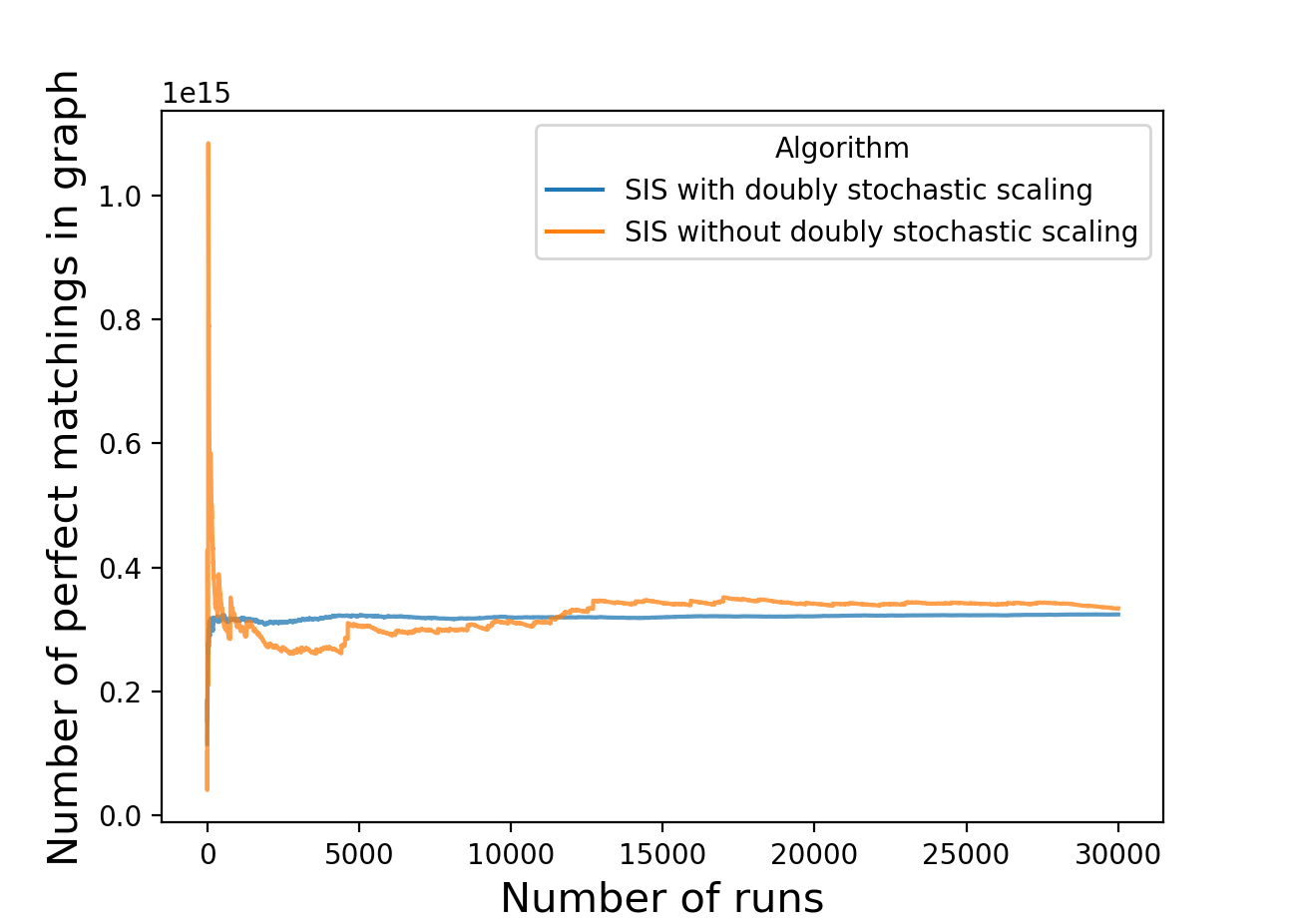}} 
\subfloat[$p=0.9, q=0.2$.]{\includegraphics[width = 3in]{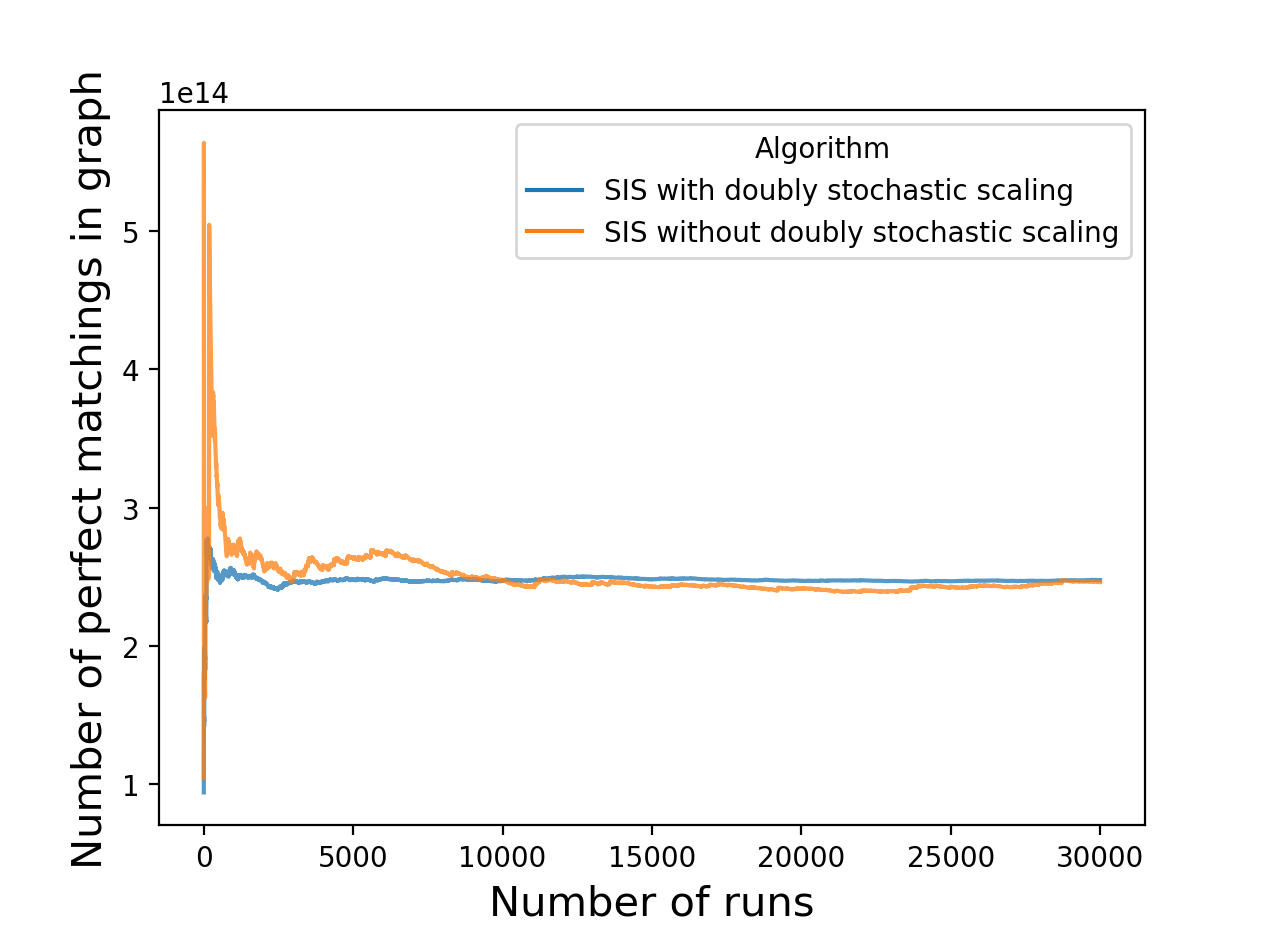}}
\caption{Comparing the convergence of the SIS with doubly stochastic scaling and without doubly stochastic scaling for different $q$ and $p$ when $n=20$. Here, the $x$-axis shows the number of runs of algorithm and the $y$-axis shows SIS estimator so far.}
\label{fig:convergence}
\end{center}
\end{figure}

\begin{table}[!htbp]
 \begin{tabular}{||c|c|c|c|c|c|c||} 
 \hline
 \rule{0pt}{15pt}
 $n$ & $p$ & $q$ & Runs & \begin{tabular}{@{}c@{}}Standard deviation of SIS\\with doubly stochastic\end{tabular} & \begin{tabular}{@{}c@{}}Standard deviation of SIS\\without doubly stochastic\end{tabular} & \begin{tabular}{@{}c@{}}Mean of SIS\\with doubly stochastic\end{tabular}\\ [0.5ex] 
 \hline\hline
$20$ & $1$ & $0.1$ & $10^5$ & $3.0699 \times 10^{14}$ & $2.2088 \times 10^{15}$ & 
$3.2606 \times 10^{14}$\\ 
 \hline
$20$ & $1$ & $0.2$ & $10^5$ & $1.9577 \times 10^{15}$ & $8.2753 \times 10^{15}$ & 
$2.3127 \times 10^{15}$\\ 
\hline

$20$ & $0.9$ & $0.1$ & $10^5$ & $3.4717 \times 10^{13}$ & $1.9276 \times 10^{14}$ & 
$3.0495 \times 10^{13}$\\ 
\hline
$20$ & $0.9$ & $0.2$ & $10^5$ & $2.5417 \times 10^{14}$ & $9.0992 \times 10^{14}$ & 
$2.4765 \times 10^{14}$\\
\hline

$20$ & $0.8$ & $0.1$ & $10^5$ & $3.4055 \times 10^{12}$ & $1.9168 \times 10^{13}$ & 
$2.5553 \times 10^{13}$\\ 
\hline
$20$ & $0.8$ & $0.2$ & $10^5$ & $3.0373 \times 10^{13}$ & $1.0374 \times 10^{14}$ & 
$2.6025 \times 10^{13}$\\

\hline
\end{tabular}
\caption{Comparison of standard deviation of SIS with or without doubly stochastic scaling.}
\label{tbl: variance}
\end{table}

 \section*{Acknowledgements}
 
 The authors thank Sourav Chatterjee, for helpful feedback on the early version of the manuscript and the proof of Lemma \ref{lm: rho concentration}, and Jan Vondr\'ak for discussions on concentration inequalities. Also, we would like to thank Nick Wormald and Fredrick Manners for bringing us up to speed about Latin squares. We thank our anonymous reviewers for their insightful comments and suggestions.
   
   Yeganeh Alimohnammadi, Mohammad Roghani and Amin Saberi are supported by NSF grant CCF1812919.

\bibliographystyle{imsart-number} 
\bibliography{ref}       

\begin{appendix}
\section{An Implementation of Algorithm \ref{alg: SIS-matching}} \label{sec: fast implmnt SIS}
 Given a graph $G(X,Y)$ and a partial matching $\overline M$, recall that an edge $e\in E(G)$ is $\overline M$-extendable, if there exists a perfect matching that contains $\overline M\cup\{e\}$. We then simply say an edge is extendable if there exists a perfect matching containing it.
 We implement Algorithm \ref{alg: SIS-matching} by finding extendable edges fast.
The main idea presented in Section~\ref{sec: extendable edges} is to use Dulmage-Mendelsohn decomposition \cite{matching-theory}, to create a directed graph from $G$, so that an edge is extendable if and only if both of its endpoints are in the same strongly connected component. 
 
In addition, we need to keep track of extendable edges as the Algorithm \ref{alg: SIS-matching} decides on adding an edge to the partial matching constructed so far. An algorithm that takes linear time to update the decomposition at each step is given in Section~\ref{sec: fast SIS modify decomposition}.

\subsection{Creating a Directed Acyclic graph and Maintaining Extendable Edges}\label{sec: extendable edges}
 To find all extendable edges, note that the union of any two perfect matchings of $G$ creates a cycle decomposition on the nodes.
Let $M$ be a perfect matching. Construct a directed graph $D_G(X,Y)$ by directing all edges of $G(X,Y)$ from $Y$ to $X$ and adding a directed copy of $M$ from $X$ to $Y$ (see Fig. \ref{fig: SCC decomposition}). An edge is extendable if both of its endpoints are in the same strongly connected component. 

\begin{algorithm}[htbp]
\SetAlgoLined
\textbf{Input:} Bipartite graph $G(X,Y)$, a perfect matching $M$.

Construct $D_G(X,Y)$.

Find all strong connected components (SCC) of $D_G(X,Y)$.

\For{any edge $e\in E(G)$}{
\eIf{endpoints of $e$ are in the same SCC}{$e$ is extendable.}{$e$ is not extendable.}
}

 \textbf{Output:} return extendable edges.
 \caption{Finding all Extendable Edges}\label{alg:finding-edges}
\end{algorithm}

\begin{figure}
\centering
\begin{tikzpicture}[-,>={Stealth[round,sep]},shorten >=1pt,auto,node distance=1cm]

    \node[main node] (1) {$x_0$};
    
    \node[main node] (3) [right =of 1]{$x_1$};
    \node[main node] (2) [right =of 3]{$x_2$};
    \node[main node] (4) [right =of 2]{$x_N$};
    \node[main node] (5) [below =of 1]{$y_0$};
    
    \node[main node] (7) [right =of 5]{$y_1$};
    \node[main node] (6) [right =of 7]{$y_2$};
    \node[main node] (8) [right =of 6]{$y_N$};

    \node at ($(2)!.5!(4)$) {\ldots};
    \node at ($(6)!.5!(8)$) {\ldots};
    \draw[matching edge] (1) -- (5);
     \draw[edge] (5.830) -- (1.970);
    \draw[edge] (6) -- (1);
     \draw[edge] (8) -- (2);
     
     \draw[edge] (7) -- (2);
     \draw[edge] (5) -- (4);
     \draw[matching edge] (2) -- (6);
     
     \draw[edge] (6.830) -- (2.970);
     \draw[matching edge] (4) -- (8);
     \draw[edge] (8.830) -- (4.970);
     \draw[matching edge] (3) -- (7);
     \draw[edge] (7.830) -- (3.970);
\end{tikzpicture}
\caption{An example of constructing $D_G(X,Y)$, where the black edges are $E(G)$, and the red edges are in  the matching $M$.}\label{fig: SCC decomposition}
\end{figure}
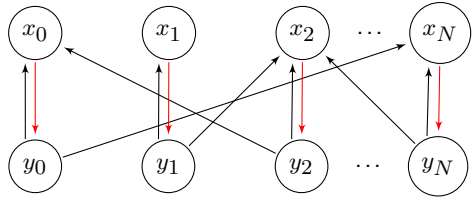

\begin{lemma}
\label{lemma-extendable-edges}
Given a bipartite graph $G(X,Y)$ and a perfect matching $M$, Algorithm \ref{alg:finding-edges} finds all the extendable edges in $O(n+m)$ time, where $m$ is the number of edges, $n$ is the number of vertices of $G$. 
\end{lemma}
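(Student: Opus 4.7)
The plan has two parts: (i) prove the correctness of the characterization ``$e$ is extendable iff its endpoints lie in the same SCC of $D_G$,'' and (ii) verify the running time.

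For correctness, fix an edge $e=(x,y)$ with $x\in X$, $y\in Y$. First I would handle the easy direction. Suppose $(x,y)$ is extendable, witnessed by a perfect matching $M'$ containing $(x,y)$. The symmetric difference $M\triangle M'$ decomposes into alternating cycles, and unless $(x,y)\in M$ (in which case $x\to y$ and $y\to x$ are both arcs of $D_G$ so they are trivially in the same SCC), one such cycle passes through both $x$ and $y$. Writing this cycle as $x_0,y_0,x_1,y_1,\ldots,x_{k-1},y_{k-1},x_0$ with $(x_i,y_i)\in M'$ and $(y_i,x_{i+1 \bmod k})\in M$, and with $x_0=x$, $y_0=y$, the arcs used are: forward matching arcs $x_{i+1}\to y_i$ (from the copy of $M$ directed $X\to Y$ in $D_G$) and backward arcs $y_i\to x_i$ (from orienting edges of $G$ from $Y$ to $X$). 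Concatenating them yields a directed cycle in $D_G$ through $x$ and $y$, so they share an SCC.

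For the reverse direction, assume $x,y$ belong to the same SCC. Since $(x,y)\in E(G)$, the arc $y\to x$ lies in $D_G$. A standard fact about SCCs is that every arc of an SCC lies on a simple directed cycle, obtained by concatenating $y\to x$ with any simple path from $x$ back to $y$ inside the SCC. Because in $D_G$ every $X$-vertex has out-degree exactly one (to its $M$-partner), any simple directed cycle in $D_G$ must alternate $X$-vertex $\to$ $Y$-vertex via a matching arc and $Y$-vertex $\to$ $X$-vertex via a reversed $G$-edge. Swapping along such an alternating cycle, i.e. replacing the matching arcs on the cycle by the non-matching arcs on the cycle, produces a new perfect matching $M'$, and by construction $M'$ contains $(x,y)$. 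Hence $(x,y)$ is extendable.

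For the running time, constructing $D_G$ by orienting each edge of $G$ from $Y$ to $X$ and adding the reversed matching arcs takes $O(n+m)$. Computing the SCC partition of $D_G$ can be done in $O(n+m)$ via Tarjan's or Kosaraju's algorithm. Finally, labelling each edge of $G$ extendable or not is a single comparison of SCC indices of its two endpoints, costing $O(1)$ per edge and $O(m)$ in total. The main subtlety to be careful about is the alternation argument in the reverse direction: one must verify that the cycle structure forced by the degree-one property on $X$ translates the operation ``swap matching and non-matching arcs'' into a valid perfect-matching swap, so that the resulting $M'$ is in fact a perfect matching of $G$ containing $(x,y)$. This is routine but is the step where a slip would break the proof.
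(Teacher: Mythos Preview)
Your proposal is correct and follows essentially the same approach as the paper's proof: both directions use the alternating-cycle correspondence between perfect matchings and directed cycles in $D_G$, and the running time is handled by Tarjan's linear-time SCC algorithm. Your version is a bit more explicit in justifying why a simple directed cycle in $D_G$ must alternate (via the out-degree-one property of $X$-vertices), whereas the paper simply asserts the alternation; otherwise the arguments coincide.
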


\begin{proof}
First, we prove the output of the algorithm is the set of all extendable edges. For that purpose, we show an edge $e=(u,v)$ is extendable if and only if  both $u$ and $v$ are in the same strongly connected component of $G$.  First, assume that $e$ is an edge in some perfect matching $M'$ (it can be equal to $M$).  Edges of $M\cup M'$ create a directed cycle decomposition on $D_G(X,Y)$, since edges of cycles in $M\cup M'$ alternate between $M$ and $M'$, and  $M$ and $M'$ are directed in opposite directions. So, in this case, both ends of $e$ are in the same strongly connected component.

Now, assume $e=(u,v)$ and $u$, $v$ appear in the same strong connected component. Then we show that $e$ is extendable. Since $u$ and $v$ are in the same SCC, there exists a directed cycle $C=(u,v,a_1,a_2,\ldots,a_{2n},u)$ in $D_G$.  If $e$ is a part of $M$ we are done. So assume $e\not\in M$. Therefore, edges of $C$ alternate between edges of $M$ and edges not in $M$. By replacing edges of $M$ that appear in the cycle $C$, with edges of $C$ that are not in $M$, we still get a perfect matching. In other words, remove $(v,a_1)$, $(a_3,a_4),\ldots, (a_{2n-1},a_{2n}),(a_{2n},u)$ from $M$, and add $(u,v)$, $(a_2,a_3),\ldots, (a_{2n-2},a_{2n-1})$ to construct $M'$, then $e$ is an edge of perfect matching $M'$.

To analyze the algorithm's running time, note that finding the strongly connected components of $D_G$ can be done in $O(n+m)$ by \cite{Tarjan1972DepthFirstSA}. 
\end{proof}

\subsection{Keeping Track of Extendable Edges }\label{sec: fast SIS modify decomposition}
Now that we know how to find extendable edges, we can implement Step 5 of Algorithm \ref{alg: SIS-matching}.
Given a graph $G$ and one of its perfect matchings $M'$, first construct $D_G(X,Y)$ from $M'$ as in Section \ref{sec: extendable edges}. Then after each vertex is matched, we need to update $D_G(X,Y)$ so that each extendable edge of the current partial matching can be found by the SCC decomposition of $D_G(X,Y)$. 

To do so, when an edge $e$ is added to the partial matching, we consider two cases. First, if $e$ is directed from $X$ to $Y$, i.e., it is already in the perfect matching, we can only remove $e$ from $D_G(X,Y)$ and the cycle decomposition of the rest of the graph indicate new extendable edges. In the second case, when $e$ is directed from $Y$ to $X$,  find a directed cycle that it appears in (which exists since $e$ is extendable). Note that if we reverse all edges in this cycle, we get a new graph $D_G(X,Y)$ such that $e$ is directed from $X$ to $Y$. Then as in the first case, we can remove edge $e$.

\begin{algorithm}[H]\label{SIS-matching}
\SetAlgoLined
\textbf{Input:} a bipartite graph $G(X,Y)$, and a nonnegative matrix $Q_{n\times n}$.

Draw a random permutation $\pi$ on $X$.

Let $M=\emptyset$,  $p_{\pi,Q}(M)=1$.

Run Algorithm \ref{alg:finding-edges}, and let $S$ be the SCC decomposition, and $D_G$ be the digraph.

\For{$i$ from $1$ to $n$}{
Find the set of neighbors of $\pi(i)$ that are in the same SCC with it (call it $N_{\pi(i)}$).

Find $Q_{\pi(i)}[N_{\pi(i)}]$, the restriction of row $\pi(i)$ of $Q$ to indices in $N_{\pi(i)}$.

Let $i^*$ be the random index in $N_{\pi(i)}$ drawn with the probability  proportional to $Q_{\pi(i)i^*}$.

$M=M\cup{(\pi(i),i^*)}$.

$p_{\pi,Q}(M)= p_{\pi,Q}(M)\times \frac{Q_{\pi(i),i^*}}{\sum_{j\in N_{\pi(i)}}Q_{\pi(i),j}} $.

\If{$(\pi(i),i^*)$ is directed from $Y$ to $X$ in $D_G$}{Find a directed cycle $C$ which contains the edge $(\pi(i),i^*)$  in $D_G$.

Reverse all edge of $C$.
}

Delete $\pi(i)$ and $i^*$ from $D_G$.

Find SCC decomposition of $D_G$.

}

 \textbf{Output:} $M, p_{\pi,Q}(M)$.
 \caption{ A Detailed Implementation of Algorithm \ref{alg: SIS-matching}}
\end{algorithm}
\begin{remark}
To run Algorithm \ref{alg:finding-edges} in the beginning, we can find one perfect matching using the Micali-Vazirani algorithm (see e.g., \cite{MicaliVazirani}).
\end{remark}

\begin{lemma}
Suppose $G$ is a bipartite graph which has at least one perfect matching, and let $Q$ be a nonnegative matrix such that $Q_{ij}>0$ for all $(i,j)\in E(G)$. Then Algorithm \ref{SIS-matching} always returns a perfect matching.
\end{lemma}
\begin{proof}
It is enough to show that for all $i$, $N(\pi(i))$ is the set of all extendable adjacent edges of $\pi(i)$.
We prove this by induction. At each step of the algorithm, all edges directed from $X$ to $Y$ form a perfect matching.  Then  result follows by the correctness of Algorithm \ref{alg:finding-edges}, proved in Lemma \ref{lemma-extendable-edges}.

The base case of induction is true by construction. At step $i$ of the algorithm, when we match $\pi(i)$ to $j$ there exists a perfect matching $M'$ that $e=(\pi(i),j)\in M'$. Similar to the proof of Lemma \ref{lemma-extendable-edges}, we can construct matching $M'$ so that its edges are the same with matching $M$, except edges that appear in cycle $C$. This is what happens in Algorithm \ref{SIS-matching}: It removes the copy of edges in $M\cap C$ that are directed from $X$ to $Y$, if $e\not\in M$, and adds all edges of $M'\cap C$ from $X$ to $Y$. So, at the end edges that are directed from $X$ to $Y$ are edges of a perfect matching that contains $e$, which proves the induction step.
 \end{proof}
\begin{prop}\label{prop: runtime}
Let $G$ be a bipartite graph with $n$ vertices and $m$ edges. Then the Algorithm \ref{SIS-matching} runs in $O(nm)$ time to sample a perfect matching.
\end{prop}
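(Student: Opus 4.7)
The plan is to bound the cost of Algorithm \ref{SIS-matching} iteration by iteration, showing that each of the $n$ passes through the main loop costs $O(n+m)$, and that the one-time preprocessing is dominated by this. Since the graph admits a perfect matching we have $m \geq n/2$, so $O(n(n+m)) = O(nm)$, which is the claimed bound.

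For the preprocessing, I would invoke Micali--Vazirani to produce an initial perfect matching $M'$ in $O(m\sqrt{n})$ time, then build the digraph $D_G(X,Y)$ in $O(n+m)$, then compute Tarjan's SCC decomposition in $O(n+m)$. Since $m\sqrt{n} \leq nm$ when $m \geq \sqrt{n}$ (which holds whenever a perfect matching exists for $n$ large enough), these preprocessing costs are subsumed by the main-loop bound. So the content is really the per-iteration analysis.

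Within one iteration of the loop, I would account for the work as follows. First, identifying the extendable neighbors $N_{\pi(i)}$: having maintained an SCC label for each vertex, we scan the adjacency list of $\pi(i)$ once and retain those neighbors sharing its SCC label, which costs $O(\deg(\pi(i))) = O(m)$ in the worst case. Reading off $Q_{\pi(i)}[N_{\pi(i)}]$, sampling $i^*$ proportionally to these weights (via a standard cumulative-sum lookup), and updating $M$ and $p_{\pi,Q}(M)$ are all $O(|N_{\pi(i)}|) = O(m)$. The key step is the cycle-reversal when $(\pi(i),i^*)$ is directed from $Y$ to $X$: I would run a BFS or DFS from $i^*$ in $D_G$ restricted to the current SCC until we revisit $\pi(i)$, producing a directed cycle through $(\pi(i),i^*)$ in $O(n+m)$ time; reversing the edges of that cycle is $O(n)$. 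Deleting the two matched vertices is $O(\deg(\pi(i))+\deg(i^*)) = O(m)$, and then recomputing the SCC decomposition on the shrunken graph is $O(n+m)$ by Tarjan. Summing yields $O(n+m)$ per iteration.

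The only subtle point is correctness of the per-iteration operations under the cycle-reversal trick: one must argue that after reversal, the edge we are about to add is oriented $X\to Y$ (so that the set of $X\to Y$ arrows continues to encode a perfect matching of the remaining graph, as was used in the preceding lemma), and that subsequent SCC recomputation correctly identifies extendable edges in the reduced graph. These facts are already established in the preceding section, so I would just cite them rather than re-prove them. Multiplying the $O(n+m)$ per-iteration cost by $n$ iterations and using $n \leq 2m$ gives the total running time $O(n(n+m)) = O(nm)$, which is the claim.
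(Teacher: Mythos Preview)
Your proposal is correct and follows essentially the same approach as the paper's own proof: Micali--Vazirani for the initial matching in $O(m\sqrt{n})$, then $n$ iterations each costing $O(n+m)$ for the DFS cycle-find and the Tarjan SCC recomputation, totaling $O(n(n+m))=O(nm)$. You supply somewhat more detail (the per-step accounting for neighbor scanning, sampling, and vertex deletion, and the explicit reduction $n+m=O(m)$ via $m\ge n/2$), but the argument is the same.
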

\begin{proof}
First, we need to find one perfect matching to build a directed graph on top of it. Finding a perfect matching takes at most $O(n^{1/2}m)$ by using \cite{MicaliVazirani}. 

Each iteration of the algorithm requires finding a cycle. For that purpose, one can use a depth-first-search which needs $O(n+m)$ operations. Also, finding the strongly connected component decomposition can be done with $O(n+m)$ operations \cite{Tarjan1972DepthFirstSA}. Therefore, Algorithm \ref{SIS-matching} can be done in $O(n(n+m))$ operations. It is worth noting that sampling from rows of $Q$ can be done in linear time, which is bounded by the computation time of each step. 
\end{proof}

Note that the Dulmage-Mendelsohn decomposition has been already used to find extendable edges when the partial matching is an empty-set (see 
\cite{maximal-matchable-edges}). However, in this section, we extended the idea to update the Dulmage-Mendelsohn decomposition and find new extendable edges after each step of the SIS algorithm without modifying the decomposition a lot.

\section{An Example for the Necessity of the Doubly Stochastic Scaling }\label{sec:counter example matching}
The main modification of our algorithm in comparison to the algorithm in \cite{diaconis2019randomized} is using the doubly stochastic scaling of the adjacency matrix as the input of Algorithm  \ref{alg: SIS-matching}. We show this scaling is necessary by giving an example where the uniform sampling of edges at each round needs an exponential number of samples while sampling according to the doubly stochastic scaling of the adjacency matrix only needs a linear number of samples.

\begin{prop}\label{matching-counter-example}
Let $G_n(X,Y)$ be a bipartite graph with $X=\{x_0,x_1,\ldots, x_n\}$ and $Y=\{y_0,y_1,\ldots, y_n\}$. Assume in $G_n$, $x_i$ is adjacent to $y_i$ for all $i$, and $x_0$ and $y_0$ are adjacent to all vertices of the other side. Let $\mu$ be the uniform distribution over the set of perfect matchings and $\nu$ be the sampling distribution of Algorithm \ref{alg: SIS-matching} with $Q$ equal to an all ones matrix. If $\rho=d\nu/d\mu$ then there exist constants $\epsilon,c>0$ such that $\mathbb{P}_{\nu}(\log \rho(X)>L+\epsilon n)>c$. 
\end{prop}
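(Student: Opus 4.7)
Throughout I will use the convention of Theorem~\ref{thm: num-samples} (the proposition's wording appears to swap $\mu$ and $\nu$ relative to the main text, so I take $\nu$ to be the uniform law on the joint space $S_{n+1}\times\chi_{G_n}$ and $\mu$ the algorithm's joint law $\mu(\pi,M)=p_{\pi,Q}(M)/(n+1)!$); then $\rho=d\nu/d\mu$ satisfies $\log\rho(\pi,M)=-\log\bigl((n+1)p_{\pi,Q}(M)\bigr)$. The first task is to compute $p_{\pi,Q}(M)$ in closed form. The $n+1$ perfect matchings of $G_n$ are $M_0=\{(x_i,y_i)\}$ and, for $j=1,\ldots,n$, the swap matching $M_j=\{(x_0,y_j),(x_j,y_0)\}\cup\{(x_i,y_i):i\ne 0,j\}$. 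For $k\ge 1$ the only neighbors of $x_k$ are $y_k$ and $y_0$, and the extendability analysis of Section~\ref{sec: extendable edges} shows that during the algorithm both edges remain extendable as long as $y_0$ is unmatched, but the moment $y_0$ gets matched every later step is forced. Setting $\ell=\pi^{-1}(x_0)$, an easy induction on the step index then gives
\[
p_{\pi,Q}(M_0)=\frac{2^{-(\ell-1)}}{n-\ell+2},\qquad p_{\pi,Q}(M_j)=\begin{cases} 2^{-\pi^{-1}(x_j)},&\pi^{-1}(x_j)<\ell,\\[2pt] \dfrac{2^{-(\ell-1)}}{n-\ell+2},&\pi^{-1}(x_j)>\ell,\end{cases}\quad j\ge 1.
\]

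Next I bound $L=E_\nu[\log\rho]=-\log(n+1)+E_\nu[-\log p_{\pi,Q}(M)]$. Splitting over the value of $M$, and using that for $j\ge 1$ the pair $(\pi^{-1}(x_0),\pi^{-1}(x_j))$ is uniform over ordered pairs of distinct positions in $\{1,\ldots,n+1\}$, the inner expectation reduces to first moments of $\ell$ and of $\min(\pi^{-1}(x_0),\pi^{-1}(x_j))$. Since $E[\min]=(n+2)/3$, a direct calculation yields $L=\tfrac{n\log 2}{3}+O(\log n)$; in particular $L=O(n)$.

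For the anti-concentration step, fix $\alpha\in(1/3,1)$ and define the event
\[
\mathcal E_\alpha=\{M=M_j\text{ for some }j\ge 1\}\cap\bigl\{\min(\pi^{-1}(x_0),\pi^{-1}(x_j))\ge\alpha n\bigr\}.
\]
The closed form for $p_{\pi,Q}(M_j)$ shows that on $\mathcal E_\alpha$ we have $-\log p_{\pi,Q}(M)\ge(\alpha n-1)\log 2$, hence $\log\rho\ge(\alpha n-1)\log 2-\log(n+1)$. Under $\nu$, conditionally on $M=M_j$ the pair $(\pi^{-1}(x_0),\pi^{-1}(x_j))$ is uniform on pairs of distinct positions in $\{1,\ldots,n+1\}$, so $P_\nu(\min\ge \alpha n\mid M=M_j)\to (1-\alpha)^2$, independently of $j$. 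Summing over $j$ gives $P_\nu(\mathcal E_\alpha)\ge\tfrac{n}{n+1}(1-\alpha)^2-o(1)$. Choosing any $\alpha\in(1/3,1)$ and then $\epsilon\in(0,(\alpha-\tfrac{1}{3})\log 2)$ forces $\log\rho>L+\epsilon n$ on $\mathcal E_\alpha$ for all large $n$, proving the proposition with $c=(1-\alpha)^2/2$.

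The main obstacle is the book-keeping in the first step: one must carefully verify that $(x_k,y_0)$ stops being extendable exactly when $y_0$ becomes matched (using the perfect-matching structure of $G_n$), while $(x_k,y_k)$ remains extendable throughout, in order to justify the closed formula in every case. Once that is in hand, both $L=\Theta(n)$ and the anti-concentration estimate reduce to elementary order-statistic computations for two uniform distinct positions in $\{1,\ldots,n+1\}$.
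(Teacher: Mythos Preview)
Your proof is correct and follows the same approach as the paper: enumerate the $n+1$ perfect matchings $M_0,\ldots,M_n$, compute $p_{\pi,Q}(M_j)$ in closed form by tracking the step at which $y_0$ becomes matched, deduce $L=\tfrac{n\log 2}{3}+O(\log n)$, and then exhibit a constant-probability event on which $-\log p_{\pi,Q}$ exceeds $L+\epsilon n$. The only difference is in the anti-concentration step: the paper conditions on $\pi^{-1}(x_0)\ge n/2+\epsilon n$, whereas you condition on $\min(\pi^{-1}(x_0),\pi^{-1}(x_j))\ge \alpha n$ for $\alpha>1/3$; your choice is slightly cleaner because it handles both branches of the formula for $p_{\pi,Q}(M_j)$ at once (the paper's stated justification ``for all $M_i$'s, if $\pi(0)\ge n/2+\epsilon n$ then $-\log_2 p_{\pi,Q}(M_i)\ge n/2$'' is not literally true when $\pi^{-1}(x_j)<\pi^{-1}(x_0)$ and $\pi^{-1}(x_j)$ is small).
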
 
 Note that this result along with Theorem \ref{thm: num-samples} shows that an exponential number of samples are needed in this class of graphs if we sample edges uniformly at random at each step of Algorithm \ref{alg: SIS-matching}.
 Before proving this result, we state the analogous result that shows 
\begin{prop}\label{matching-counter-example-DS}

Let the graph $G_n$ be as defined in Proposition \ref{matching-counter-example}. Given a constant $\epsilon>0$, there exists a constant $C>0$ such that  $Cn\epsilon^{-2}$ samples  of Algorithm  \ref{alg: SIS-matching} is enough to give an $(1-\epsilon)$-approximation of the number of perfect matchings.
\end{prop}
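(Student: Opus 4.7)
My plan is to compute the sampling distribution of Algorithm \ref{alg: SIS-matching} on $G_n$ in enough detail to bound the variance of $I_N$ uniformly, and then apply Chebyshev's inequality. First I would exploit the symmetry of $G_n$ under simultaneous permutations of $\{1,\ldots,n\}$ on the two sides, which forces the scaling diagonals to take the form $D_\alpha = \mathrm{diag}(\alpha_0, \alpha, \ldots, \alpha)$ and $D_\beta = \mathrm{diag}(\beta_0, \beta, \ldots, \beta)$; the $X \leftrightarrow Y$ symmetry further forces $r := \alpha_0/\alpha = \beta_0/\beta$. The row-sum equations then collapse to the quadratic $r^2 + (n-1)r - 1 = 0$, giving $r = \Theta(1/n)$, from which $Q^A_{ii} = 1 - \Theta(1/n)$ for $i \geq 1$, $Q^A_{i0} = Q^A_{0i} = \Theta(1/n)$, and $Q^A_{00} = \Theta(1/n^2)$.

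Next I would enumerate the $n+1$ perfect matchings of $G_n$: the identity matching $M_0$ (mapping $x_i \to y_i$) and, for each $t \in \{1,\ldots,n\}$, the swap matching $M_t$ sending $x_0 \to y_t$, $x_t \to y_0$, and $x_i \to y_i$ otherwise. Tracking the algorithm step by step and letting $k$ be the position of $x_0$ in $\pi$, one derives the closed form
\[
p_\pi(M_0) = \bigl(1/(1+r)\bigr)^{k-1} \cdot \frac{r}{r + n - k + 1},
\]
and analogous expressions for $p_\pi(M_t)$ depending on the position $s$ of $x_t$ and whether $s < k$ or $s > k$. A direct estimate using $(1+r)^n = O(1)$ and $1/r = O(n)$ then yields the uniform bound $1/p_\pi(M) \leq C_0 n^2$ for an absolute constant $C_0$, valid for every matching $M$ and every permutation $\pi$ with $p_\pi(M) > 0$.

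Finally, combining this uniform bound with the unbiasedness $\mathbb{E}[1/p_\pi(M)] = |\chi_{G_n}| = n+1$ gives
\[
\mathrm{Var}\bigl(1/p_\pi(M)\bigr) \leq \mathbb{E}\bigl[1/p_\pi(M)^2\bigr] \leq \Bigl(\sup_{M,\pi} 1/p_\pi(M)\Bigr) \cdot \mathbb{E}\bigl[1/p_\pi(M)\bigr] = O(n^3),
\]
so $\mathrm{Var}(I_N) = O(n^3/N)$, and Chebyshev's inequality yields
\[
\mathbb{P}\bigl(|I_N - (n+1)| \geq \epsilon (n+1)\bigr) \leq \frac{O(n^3/N)}{\epsilon^2 (n+1)^2} = O\!\left(\frac{n}{N \epsilon^2}\right),
\]
which is at most a small constant whenever $N \geq C n \epsilon^{-2}$ for a suitable $C$. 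The main technical step is verifying at each round of the algorithm which edges remain extendable, in order to justify the closed forms for $p_\pi(M)$; the essential observation is that the doubly stochastic scaling pushes $Q^A_{i0}$ down to order $1/n$, which is exactly what prevents the exponential blow-up that afflicts uniform edge sampling in Proposition \ref{matching-counter-example}.
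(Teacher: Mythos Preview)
Your proof is correct and follows the same underlying plan as the paper: compute the doubly stochastic scaling $Q^A$ explicitly, derive closed-form expressions for $p_\pi(M)$ for each of the $n+1$ matchings $M_0,M_1,\ldots,M_n$, extract a uniform upper bound on $1/p_\pi(M)$, and conclude. The differences are in the execution rather than the strategy.

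First, you solve for $Q^A$ exactly via the quadratic $r^2+(n-1)r-1=0$ and correctly find $Q^A_{00}=\Theta(1/n^2)$, whereas the paper writes the scaling with $1/n$ everywhere on the first row and column (which is not actually doubly stochastic). As a consequence your worst case is $1/p_\pi(M_0)=\Theta(n^2)$ (attained when $x_0$ is processed first), while the paper's simplified matrix gives $O(n)$; your more careful bookkeeping still lands on the stated $Cn\epsilon^{-2}$ sample complexity, so nothing is lost.

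Second, the paper closes by invoking Theorem~\ref{thm: num-samples} (the Chatterjee--Diaconis criterion), using the uniform bound to make the tail probability $\mathbb{P}_\nu(\log\rho>L+t/2)$ vanish. You instead turn the uniform bound directly into a variance estimate via $\mathbb{E}[1/p^2]\le(\sup 1/p)\cdot\mathbb{E}[1/p]$ and finish with Chebyshev. Your route is more elementary and self-contained, and it makes the $\epsilon^{-2}$ dependence transparent; the paper's final step is terser and leans on the general machinery. Both are valid, and for this particular example your variance argument is arguably cleaner.
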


\begin{proof}[Proof of Proposition \ref{matching-counter-example}]
Define the following notations for perfect matchings in $G$, 
$$M_0=\{(x_i,y_i):1\leq i\leq n\},$$
and for $1\leq i\leq n$,
$$M_i=\{(x_0,y_i),(x_i,y_0)\}\cup\{(x_j,y_j):j\not\in\{0,i\}\}.$$
Consider $M_0$, and let $\pi(i)$ be the location of $x_i$ in the permutation $\pi$. All $x_i$ that come before $x_0$, have two edges to choose between, and for all nodes that come after $x_0$, there is only one edge left. For $x_0$ itself, there are $n-\pi(0)+1$ edges available that any one of them can extend to a perfect matching. Therefore, $p_{\pi,Q}(M_0)=\frac{2^{-\pi(0)}}{n-\pi(0)+1}$, which implies
$$\mathbb{E}_{\pi}\big(\log_2 p_{\pi,Q}(M_0)\big)=-\sum_{i=0}^n \frac{1}{n+1}(i+\log_2(n+1-i))=-\frac{n}{2}-\frac{\log_2((n+1)!)}{n+1}.$$

Now, for matching $M_i$ with $1\leq i\leq n$, consider two cases to find the sampling probability. Each vertex with $\pi(i)<\pi(0)$ has two choices for pairing, and otherwise it has only one choice. So, when $\pi(i)<\pi(0)$, $p_{\pi,Q}(M_i)=2^{-\pi(i)}$. For the case $\pi(i)>\pi(0)$, all vertices before $\pi(0)$ have two choices, and $\pi(0)$ have $n-\pi(0)+1$ choices. Vertices after that have only one extension to a perfect matching. Therefore,
$p_{\pi,Q}(M_i)=\frac{2^{-\pi(0)}}{n-\pi(0)+1},$ which again implies,
\begin{align*}
    \mathbb{E}_{\pi}\big(\log_2 p_{\pi,Q}(M_i)\big)&=-\sum_{j=0}^n\Big( \frac{n+1-j}{n(n+1)}(j+\log_2(n+1-j))+\sum_{k=0}^{j}\frac{k}{n(n+1)}\Big)\\
    &=-\sum_{j=0}^n\Big( \frac{n+1-j}{n(n+1)}(j+\log_2(n+1-j))+\frac{j(j+1)}{2n(n+1)}\Big)\\
    &= -\sum_{j=0}^n\frac{j}{n}+ \frac{\log_2(n+1-j)}{n}+\frac{(n+1-j)\log_2(n+1-j)}{n(n+1)}
\end{align*}
The last equality shows that there exists constants $C_1,C_2\geq 0$ such that
\[  -\frac{n}{3}+C_1\log_2 n\leq \mathbb{E}_{\pi}\big(\log_2 p_{\pi,Q}(M_i)\big)\leq -\frac{n}{3}+C_2\log_2 n.\]
By taking expectation over the uniform distribution, $\mu$, on all perfect matchings $M_0,M_1,\ldots,M_n$, we see that there exists constants $C_1',C_2'\geq 0$ such that
$$ \frac n3+C_1'\log_2 n\leq \mathbb{E}_{\mu,\pi}\big(\log_2 p_{\pi,Q}(M)\big)\leq \frac n3+C_2'\log_2 n,$$
where $1/2\geq C\geq 1/3$.
Recall that $\rho(M)=\frac{1}{np_{\pi,Q}(M)}$. Then for small enough $\epsilon$ , 
$$ \mathbb{P}_\nu(\log_2\rho(M)\geq \frac n3+\epsilon n)\geq \frac 12-\epsilon.$$
the latter inequality is true, because for all $M_i$s,  if $\pi(0)\geq n/2+\epsilon n$ then there exists $\epsilon>0$ such that $-\log_2(p_{\pi,Q}(M_i))\geq n/2$.
\end{proof}

\begin{proof}[Proof of Proposition \ref{matching-counter-example-DS}]
The doubly stochastic scaling of the adjacency matrix, $Q_A$, has $1/n$ in its first row and column and $1-\frac{1}{n}$ on all other diagonal entries and zero everywhere else. 

Define $M_i$'s as in the proof of Proposition \ref{matching-counter-example}. If $\pi(0)=i$ with probability $(1-\frac1n)^i\frac{1}{n-i}$ the algorithm generates matching $M_0$. In this case, if we let $q=\frac1n$, then $$\log p_{\pi,Q_A}(M_0)=i\log(1-q)+\log(q).$$ 

For other matchings $M_i$, where $i\geq 1$, with probability $(1-q)^{\min(\pi(i),\pi(0))}q/(1-qi)$ we get the matching $M_i$ with $$\log p_{\pi,Q_A}(M_i)=(\min(\pi(i),\pi(0))-1)\log(1-q)+\log(q).$$
Therefore,
$$\max_{M_i,\pi}\{-\log p_{\pi,Q_A}(M_i)\}\leq n\log(\frac{n}{n-1})+\log(n)\leq 2+\log n.$$
Then if we let $L=\mathbb E_\mu\rho(\log n)$ as in Theorem \ref{thm: num-samples},
 $$\mathbb{P}_\nu\big(\log p_{\pi,Q_A}(M_i)\geq L+\log n\big)=0,$$
 which proves the statement by  Theorem \ref{thm: num-samples}.
\end{proof}

\section{Computing Permanent of Zero-Blocked Matrices}\label{sec: exact permanent card guessing}
Given sequences $a=(a_1,\ldots,a_r)$ and $b=(b_1,\ldots,b_r)$, let $M_{ab}$ be a zero-one matrix, such that zeros forms diagonal blocks of sizes $a_1 \times b_1, a_2 \times b_2, \ldots, a_r \times b_r$ (see Figure \ref{fig:zero_block_matrix}). We show that it is possible to compute $\per(M_{ab})$ in a linear time in the number of entries of the matrix. As a result, one can compute the greedy strategy for card guessing, as discussed in Section \ref{sec: card guessing SIS}.

\begin{lemma}
Let $M_{ab}$ be defined as above. Suppose  $M_{ab}$ is an $n \times n$ matrix. Then $\per(M_{ab})$ can be computed in $O(n^2)$ time.
\end{lemma}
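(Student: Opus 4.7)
The plan is to use inclusion--exclusion on the forbidden zero-blocks, observing that these blocks are pairwise disjoint in both rows and columns, so the contributing terms factorize.

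Concretely, write $\text{per}(M_{ab})$ as the number of permutations $\sigma$ of $[n]$ that place no $1$ in a zero-block. For each $i\in[r]$, let $B_i$ denote the $a_i\times b_i$ forbidden block. By the standard rook-polynomial identity,
\[
\text{per}(M_{ab}) \;=\; \sum_{s=0}^{n} (-1)^{s}\, R_{s}\, (n-s)!,
\]
where $R_s$ counts the number of ways to place $s$ non-attacking rooks in the union of the forbidden blocks. Because the blocks $B_1,\dots,B_r$ occupy disjoint row-sets and disjoint column-sets, a configuration of $s$ non-attacking rooks in $\bigcup_i B_i$ is simply a choice of $k_i$ rooks in each $B_i$ with $\sum_i k_i = s$, independently across $i$. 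Within a single block $B_i$, the number of $k_i$-rook placements is $\binom{a_i}{k_i}\binom{b_i}{k_i} k_i!$. Hence
\[
R_{s} \;=\; \sum_{\substack{k_1+\cdots+k_r=s \\ 0\le k_i\le \min(a_i,b_i)}} \prod_{i=1}^{r}\binom{a_i}{k_i}\binom{b_i}{k_i}k_i!.
\]

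For the algorithmic part, introduce, for each $i$, the univariate polynomial
\[
p_i(x) \;=\; \sum_{k=0}^{\min(a_i,b_i)} \binom{a_i}{k}\binom{b_i}{k} k!\, x^{k},
\]
and observe that the coefficient of $x^s$ in $\prod_{i=1}^{r} p_i(x)$ is exactly $R_s$. One then reads off $\text{per}(M_{ab}) = \sum_{s=0}^n (-1)^s [x^s]\!\bigl(\prod_i p_i\bigr)\,(n-s)!$. Each $p_i$ has degree $d_i := \min(a_i,b_i)$ and can be built in $O(d_i)$ time using running binomial/factorial tables precomputed in $O(n)$. Multiplying the $p_i$'s into a running product of degree at most $n$ costs, at step $i$, at most $O(n\cdot d_i)$ operations; summing over $i$ gives $O(n\sum_i d_i) \le O(n\sum_i a_i) = O(n^2)$. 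The final alternating sum over $s$ is $O(n)$. This yields the claimed $O(n^2)$ bound.

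I expect the only subtle point to be the clean factorization $R_s = \sum \prod_i \binom{a_i}{k_i}\binom{b_i}{k_i} k_i!$, which relies crucially on the block-diagonal layout (distinct blocks never share a row or a column, so rooks from different blocks are automatically non-attacking). Once this is articulated, the rest is bookkeeping: precompute factorials and binomials, form the $r$ factor polynomials, and multiply them in linear-in-degree time. No nontrivial obstacle remains.
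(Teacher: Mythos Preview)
Your proof is correct and essentially identical to the paper's: both use the same inclusion--exclusion formula $\mathrm{per}(M_{ab})=\sum_{s}(-1)^{s}R_{s}(n-s)!$, the same per-block count $\binom{a_i}{k}\binom{b_i}{k}k!=\binom{a_i}{k}\,b_i!/(b_i-k)!$, and the same $O(n\sum_i\min(a_i,b_i))=O(n^2)$ time analysis. The only cosmetic difference is that the paper writes the running product as a dynamic-programming table $f(i,j)$ satisfying $f(i,j)=\sum_{k}f(i-1,j-k)\binom{a_i}{k}\,b_i!/(b_i-k)!$, which is exactly your polynomial-convolution step for multiplying in $p_i$.
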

\begin{proof}
To describe the algorithm for computing $\per(M_{ab})$, we need the following notations. Let $f(i,j)$ be the number of the ways that we can select $j$ zeros in the first $i$ blocks such that no two zeros are in the same row or same column. Since there are $r$ blocks in total, $f(r,j)$ shows the number of the ways that we can select $j$ zeros in $M_{ab}$ such that no two zeros are in the same row or the same column. Then we can formulate the permanent of $M_{ab}$ by the inclusion-exclusion principle, 
$$
\per(M_{ab}) = \sum_{j=0}^{n} (-1)^j (n - j)!f(r,j).
$$

Therefore, if we can compute $f(i,j)$ for all $0 \leq i \leq r$ and $0 \leq j \leq n$ in $O(n^2)$ time, we can compute the permanent of the matrix $M_{ab}$ in $O(n^2)$ time. 

We know that the size of the $i^{th}$ block is $a_i \times b_i$. The number of ways to choose $k$ zeros in this block in such a way that no two zeros are in the same row or column is 
${a_i \choose k}\frac{b_i!}{(b_i-k)!}$. Let $q_i = \min(a_i, b_i)$. Then $k$ can take any value in $\{0,1,\ldots,q_i\}$. Therefore,
\begin{equation}\label{eq: dp update}
f(i, j) = \sum_{k = 0}^{q_i} f(i-1,j-k) {a_i \choose k}\frac{b_i!}{(b_i-k)!}.
\end{equation}

Now, we claim that we can find the values of $f(i,j)$ for all $0\leq i\leq r, 0\leq j\leq n$ in $O(n^2)$ time. First, note that we can compute all the values of ${i \choose j}$ for all $0 \leq j \leq i \leq n$ using Pascal's rule in $O(n^2)$ time. We run a procedure to find the values of $f(i,j)$ using recursive induction. First, initialize all values of $f(i,j)$ to zero except for $f(0, 0) = 1$. Then iterate over $i = \{1,2,\ldots, r\}$ and $j = \{1, 2, \ldots, n\}$ in an increasing order and update the value of an entry, based on the previously calculated values. To compute $f(i,j)$, by \eqref{eq: dp update}  we need $O(\min(a_i,b_i))$ operations. Hence, for a fixed $i$ and all $0 \leq j \leq n$, computing $f(i,j)$ takes $O(n \cdot \min(a_i,b_i))$ operations. As a result, the total number of operations for computing all values of $f$ are
$$
\sum_{i=0}^r O(n \cdot \min(a_i,b_i)) = O(n^2),
$$
which proves the claim. Therefore, computing the permanent of $M_{ab}$ can be done in $O(n^2)$ time. 
\end{proof}

\end{appendix}
\end{document}